\documentclass[11pt]{article}
\usepackage[a4paper,margin=27mm]{geometry}
\usepackage{amsmath,amssymb,amsthm,mathtools}
\usepackage{microtype}
\usepackage{array,booktabs}
\usepackage{hhline}
\usepackage{tikz}
\usepackage{float}
\usepackage{placeins}
\usepackage[hidelinks]{hyperref}
\hypersetup{
  pdftitle={Generalized staircase partitions and Macdonald principal specializations},
  pdfauthor={Tatsushi Shimazaki}
}

\numberwithin{equation}{section}
\newtheorem{theorem}{Theorem}[section]
\newtheorem{proposition}[theorem]{Proposition}
\newtheorem{lemma}[theorem]{Lemma}
\newtheorem{corollary}[theorem]{Corollary}
\theoremstyle{definition}
\newtheorem{definition}[theorem]{Definition}
\newtheorem{example}[theorem]{Example}
\theoremstyle{plain}

\begin{document}
\pagestyle{plain}

\begin{center}
{\Large\bfseries Generalized staircase partitions and\\ Macdonald principal specializations}\par
\vspace{1mm}
{\large Tatsushi Shimazaki\footnotemark}\par
\end{center}
\footnotetext{Liberal Arts, National Institute of Technology, Akashi College, Akashi, Hyogo 674--8501, Japan.\\
E-mail: \href{mailto:t.shimazaki@akashi.ac.jp}{t.shimazaki@akashi.ac.jp}.\\
\textit{MSC 2020:} Primary 05E05, Secondary 33D52, 05A15.\\
\textit{Keywords:} Macdonald polynomial, generalized staircase partition, principal specialization, Hall--Littlewood polynomial, Gaussian polynomial.}
\vspace{3mm}

\begin{abstract}
Generalized staircase partitions are obtained by replacing each box of an ordinary staircase with a fixed rectangle. We determine the unique shortest horizontal-strip sequence between consecutive generalized staircases and its conjugate vertical-strip sequence. For monic Macdonald polynomials, we derive explicit finite principal-specialization ratios along both sequences. For arbitrary nested partitions, coefficientwise nonnegativity after removal of the monomial factor forces independence of $q$. Along the horizontal sequence, this nonnegativity is characterized by rectangular complementation, apart from the one-column case. Along the vertical sequence, it occurs at the smallest admissible number of variables, apart from the initial column case. Endpoint ratios yield a triangular product formula and a centered product with exchange, reciprocity, and inversion identities. Hall--Littlewood, Jack, and Schur specializations give Gaussian-polynomial, finite-product, and tableau formulas, respectively.
\end{abstract}

\section{Introduction}

Macdonald polynomials form a two-parameter family of symmetric polynomials whose
specializations and limits include Hall--Littlewood, Jack, and Schur polynomials.
Their finite principal specializations admit product formulas over the boxes of
the indexing partition~\cite{Macdonald}.
Cherednik~\cite{Cherednik} proved Macdonald's evaluation and duality
conjectures for reduced root systems. Noumi~\cite{Noumi} develops Macdonald polynomial theory using a
commuting family of $q$-difference operators.

Generalized staircase partitions are obtained from ordinary staircase partitions
by replacing each box with a rectangle of fixed width and height. Equivalently,
their horizontal steps have a common width and their vertical steps have a common
height. Reiner, Tenner, and Yong~\cite{RTY} call these partitions rectangular
staircases and study the corresponding intervals in Young's lattice together with
barely set-valued tableaux. Covering relations in these intervals add a single
box. Fan, Guo, and Sun~\cite{FGS} proved the Reiner--Tenner--Yong
enumeration conjecture for flagged barely set-valued semistandard tableaux of
rectangular staircase shape using the expected jaggedness formula for balanced
shapes~\cite{CHHM}.

Staircase, rectangular, and quasistaircase partitions appear in several parts of
Macdonald theory. Specialized Macdonald polynomials indexed by staircase
partitions occur in a $q$-discriminant factorization~\cite{BL} and in
factorizations at admissible parameter values~\cite{CDLfactor}. Highest weight
Macdonald and Jack polynomials for staircase or rectangular partitions are treated
in~\cite{JL}. Quasistaircase partitions index singular nonsymmetric Macdonald
polynomials at special parameter values~\cite{CDsingular} and enter a
construction of highest weight symmetric Macdonald polynomials from singular
nonsymmetric ones~\cite{CDLconnections}.
Principal specialization formulas for nonsymmetric and symmetric Macdonald
polynomials are derived in~\cite{DL}. For subrectangular partitions,
zeros and poles at specialized parameter values are examined in
\cite{DL}. Vanishing conditions at specialized parameters
define a symmetric Macdonald ideal in~\cite{FJMM} and nonsymmetric
subrepresentations in~\cite{Kasatani}. Denominator
divisibility for nonsymmetric Macdonald polynomials indexed by staircase vectors
is analyzed in~\cite{CGL}.

We study finite principal specialization ratios of monic Macdonald polynomials
along two sequences between consecutive generalized staircase partitions, with independent
indeterminates $q$ and $t$. The intermediate generalized staircase partitions form the
unique shortest sequence whose successive differences are horizontal strips. Conjugation
produces the corresponding unique shortest sequence whose successive differences are vertical
strips. The conjugate endpoints have the two staircase step dimensions interchanged. The
interior conjugates do not belong to the horizontal sequence obtained by interchanging these
dimensions. Within the parameter range of~\cite{CDLfactor}, the intermediate generalized
staircase partitions in the horizontal sequence belong to the staircase family considered in
that work. Except for the initial partition, their conjugates coincide with the quasistaircase
shapes in~\cite{CGL}. Section~3 records the precise parameter correspondences.

Cancellation in Macdonald's finite principal specialization formula yields an explicit successive
horizontal ratio, and multiplication gives the ratio between arbitrary positions.
Lemma~\ref{lem:general-nonnegativity} shows that, for nested partitions, coefficientwise
nonnegativity after removal of the monomial factor forces independence of $q$.
For the horizontal sequence outside the column case, coefficientwise nonnegativity holds
exactly for rectangular complements, and the ratio is a power of $t$
(Theorem~\ref{thm:nonnegative-complement}). For the conjugate vertical sequence, a separate
cancellation produces products in $q$-shifted factorials. Except for the ratio from the empty
partition to a column, coefficientwise nonnegativity occurs exactly at the smallest admissible
number of variables (Theorem~\ref{thm:vertical-nonnegative}).

Rectangular complementation yields reflection identities along the horizontal
sequence and boundary specializations. In a more general setting, Luque's complement
relation in finitely many variables for subrectangular Macdonald
polynomials~\cite{LuqueSubrectangular} shows that rectangular complementation
implies a power of $t$. The ratios satisfy a recurrence in the number of variables.

At the endpoints, telescoping consecutive ratios produces a principal
specialization product indexed by a triangular set. The centered endpoint product
satisfies exchange, reciprocity, and inversion identities (Proposition~\ref{prop:centered-symmetries}). The
exchange identity interchanges the two staircase step dimensions together with the
Macdonald parameters. The symmetry of the centered product is distinct from the symmetry under
interchange of $q$ and $t$ for transformed Macdonald polynomials studied by
Gillespie~\cite{Gillespie}.

The Hall--Littlewood specialization reduces the normalized cumulative ratio along
the horizontal sequence to a Gaussian polynomial, the rank generating polynomial for partitions in
a rectangle (Theorem~\ref{thm:HL}). For the conjugate vertical sequence, the Gaussian factor occurs in
its initial step and the later successive ratios are powers of $t$. The Jack limit yields finite product formulas and an additive
centered product. At the Schur specialization, the ratios compare semistandard
tableau counts. A shift between consecutive generalized staircase diagrams
preserves hook lengths away from the top $s$ rows, and the hook-length formula
converts the resulting
ratio of hook products into the corresponding ratio of standard Young tableau counts (Proposition~\ref{prop:hook-ratios}).
For ordinary staircases, these formulas recover the hook and tableau ratios
underlying the Jacobi polynomial evaluations in
\cite{ShimazakiJacobi}.

Section~2 fixes the notation and the Macdonald principal specialization. Sections~3--5
develop the horizontal and conjugate vertical sequences, their specialization ratios,
coefficientwise nonnegativity, and rectangular complementation. Section~6 treats endpoint
products, Section~7 their Hall--Littlewood, Jack, and Schur specializations, and Section~8
concludes the paper.

\section{Preliminaries}

Throughout, empty products are understood to be $1$.

\subsection{Partitions and Young diagrams}

A partition $\lambda=(\lambda_1,\lambda_2,\ldots)$ is a weakly decreasing
sequence of nonnegative integers with finitely many nonzero parts. We identify
$\lambda$ with its Young diagram in English notation,
\[
\lambda=\{(i,j)\in\mathbb Z_{>0}^2:1\le j\le\lambda_i\}.
\]
Rows are indexed from top to bottom and columns from left to right. The size
and length of $\lambda$ are
\[
\lvert\lambda\rvert=\sum_{i\ge1}\lambda_i,
\qquad
\ell(\lambda)=\lvert\{i:\lambda_i>0\}\rvert.
\]
The empty partition is denoted by $\varnothing$. The conjugate partition
$\lambda'$ is defined by
\[
\lambda'_j=\lvert\{i:\lambda_i\ge j\}\rvert.
\]

For a box $u=(i,j)\in\lambda$, its arm, coarm, leg, and coleg lengths are
\[
\begin{aligned}
a_\lambda(u)&=\lambda_i-j, & a'_\lambda(u)&=j-1,\\
l_\lambda(u)&=\lambda'_j-i, & l'_\lambda(u)&=i-1.
\end{aligned}
\]
Its content is $c(u)=j-i$. Define
\begin{equation*}
n(\lambda)=\sum_{i\ge1}(i-1)\lambda_i.
\end{equation*}

For partitions $\mu$ and $\lambda$, write $\mu\subseteq\lambda$ if $\mu_i\le\lambda_i$ for all $i$. For $\mu\subseteq\lambda$, the skew diagram
$\lambda/\mu$ is the set difference of their Young diagrams. A skew diagram is a horizontal strip if it contains at most one box in each column. It is a
vertical strip if it contains at most one box in each row.

Let $m,n\in\mathbb Z_{>0}$. The notation $(m^n)$ denotes the rectangular partition with $n$ parts equal to $m$. For $\lambda\subseteq(m^n)$, write
$\lambda=(\lambda_1,\ldots,\lambda_n)$, adjoining trailing zeros if necessary. The rectangular complement of $\lambda$ in $(m^n)$ is
\begin{equation*}
(m-\lambda_n,m-\lambda_{n-1},\ldots,m-\lambda_1).
\end{equation*}
Geometrically, the rectangular complement is obtained by rotating the skew diagram $(m^n)/\lambda$ by $180^\circ$.

\subsection{Generalized staircase partitions}

Exponents on parts indicate multiplicities.

\begin{definition}
Let $w,s\in\mathbb Z_{>0}$. Set $\delta_1^{(w,s)}=\varnothing$. For $k\ge2$, define
\[
\delta_k^{(w,s)}=\bigl((w(k-1))^s,(w(k-2))^s,\ldots,w^s\bigr).
\]
The partition $\delta_k^{(w,s)}$ is called a generalized staircase partition.
\end{definition}

Equivalently, the Young diagram of $\delta_k^{(w,s)}$ is obtained from that of
$\delta_k$ by replacing each box with a rectangle of shape $(w^s)$. The resulting
rectangular blocks are indexed by the boxes of $\delta_k$.

The horizontal steps of $\delta_k^{(w,s)}$ have width $w$, and the vertical steps have height $s$. For $w=s=1$, we write $\delta_k=\delta_k^{(1,1)}$, where
$\delta_1=\varnothing$ and $\delta_k=(k-1,k-2,\ldots,1)$ for $k\ge2$. Reiner, Tenner, and Yong~\cite[Section~1]{RTY} call these partitions rectangular staircases and write $\delta_k(w^s)$. The generalized staircase $((kc)^r,((k-1)c)^r,\ldots,c^r)$ of Gottlieb, Krnc, and Mur\v{s}i\v{c}~\cite[Definition~2.1]{GKM} equals $\delta_{k+1}^{(c,r)}$.

For every $k\ge1$, the size and length of $\delta_k^{(w,s)}$ are
\begin{equation}\label{eq:stair-size-length}
\left\lvert\delta_k^{(w,s)}\right\rvert=\frac{ws\,k(k-1)}{2},
\quad
\ell\bigl(\delta_k^{(w,s)}\bigr)=s(k-1).
\end{equation}
Conjugation interchanges the two step parameters:
\begin{equation}\label{eq:stair-conjugate}
\bigl(\delta_k^{(w,s)}\bigr)'=\delta_k^{(s,w)}.
\end{equation}

Figure~\ref{fig:ws-directions} illustrates the roles of the two step parameters for $k=3$.

\begin{figure}[H]
\centering
\begin{tikzpicture}[x=0.31cm,y=-0.31cm,line width=0.34pt,>=stealth]
\definecolor{wfill}{RGB}{222,234,248}
\definecolor{wline}{RGB}{76,103,235}
\definecolor{sfill}{RGB}{224,243,218}
\definecolor{sline}{RGB}{42,153,65}

\node[text=sline,anchor=east] at (-0.8,3.0) {$s=3$};
\node[text=sline,anchor=east] at (-0.8,11.2) {$s=2$};
\node[text=sline,anchor=east] at (-0.8,18.0) {$s=1$};
\node[text=wline] at (1.0,21.2) {$w=1$};
\node[text=wline] at (11.0,21.2) {$w=2$};

\begin{scope}[shift={(0,0)}]
  \foreach \row/\col in {2/1,4/0,5/0}{\fill[sfill] (\col,\row) rectangle +(1,1);}
  \foreach \row/\len in {0/2,1/2,2/2,3/1,4/1,5/1}{
    \foreach \col in {0,...,1}{\ifnum\col<\len\draw (\col,\row) rectangle +(1,1);\fi}}
  \node[anchor=south east,font=\scriptsize] at (2,-0.45) {$\delta_3^{(1,3)}=(2^3,1^3)$};
\end{scope}

\begin{scope}[shift={(0,9.2)}]
  \foreach \row/\col in {1/1,2/0,3/0}{\fill[sfill] (\col,\row) rectangle +(1,1);}
  \foreach \row/\len in {0/2,1/2,2/1,3/1}{
    \foreach \col in {0,...,1}{\ifnum\col<\len\draw (\col,\row) rectangle +(1,1);\fi}}
  \node[anchor=south east,font=\scriptsize] at (2,-0.45) {$\delta_3^{(1,2)}=(2^2,1^2)$};
\end{scope}

\begin{scope}[shift={(0,17.2)}]
  \foreach \row/\len in {0/2,1/1}{
    \foreach \col in {0,...,1}{\ifnum\col<\len\draw (\col,\row) rectangle +(1,1);\fi}}
  \node[anchor=south east,font=\scriptsize] at (2,-0.45) {$\delta_3^{(1,1)}=(2,1)$};
\end{scope}

\begin{scope}[shift={(10,0)}]
  \foreach \row/\col in {0/2,0/3,1/2,1/3,2/2,2/3,3/1,4/1,5/1}{\fill[wfill] (\col,\row) rectangle +(1,1);}
  \foreach \row/\len in {0/4,1/4,2/4,3/2,4/2,5/2}{
    \foreach \col in {0,...,3}{\ifnum\col<\len\draw (\col,\row) rectangle +(1,1);\fi}}
  \node[anchor=south west,font=\scriptsize] at (0,-0.45) {$\delta_3^{(2,3)}=(4^3,2^3)$};
\end{scope}

\begin{scope}[shift={(10,9.2)}]
  \foreach \row/\col in {0/2,0/3,1/2,1/3,2/1,3/1}{\fill[wfill] (\col,\row) rectangle +(1,1);}
  \foreach \row/\len in {0/4,1/4,2/2,3/2}{
    \foreach \col in {0,...,3}{\ifnum\col<\len\draw (\col,\row) rectangle +(1,1);\fi}}
  \node[anchor=south west,font=\scriptsize] at (0,-0.45) {$\delta_3^{(2,2)}=(4^2,2^2)$};
\end{scope}

\begin{scope}[shift={(10,17.2)}]
  \foreach \row/\col in {0/2,0/3,1/1}{\fill[wfill] (\col,\row) rectangle +(1,1);}
  \foreach \row/\len in {0/4,1/2}{
    \foreach \col in {0,...,3}{\ifnum\col<\len\draw (\col,\row) rectangle +(1,1);\fi}}
  \node[anchor=south west,font=\scriptsize] at (0,-0.45) {$\delta_3^{(2,1)}=(4,2)$};
\end{scope}

\draw[->,draw=wline,line width=1.05pt] (4.0,2.55) -- (8.0,2.55)
  node[midway,above,text=wline] {$w$};
\draw[->,draw=wline,line width=1.05pt] (4.0,11.25) -- (8.0,11.25)
  node[midway,above,text=wline] {$w$};
\draw[->,draw=wline,line width=1.05pt] (4.0,18.15) -- (8.0,18.15)
  node[midway,above,text=wline] {$w$};

\draw[->,draw=sline,line width=1.05pt] (2.75,16.70) -- (2.75,13.75)
  node[midway,right,text=sline] {$s$};
\draw[->,draw=sline,line width=1.05pt] (2.75,8.75) -- (2.75,6.45)
  node[midway,right,text=sline] {$s$};

\end{tikzpicture}
\caption{Generalized staircase partitions with $k=3$. In the left column ($w=1$), the highlighted boxes show the increment from $s-1$ to $s$ for $s=2,3$. In the right column, the highlighted boxes show the increment from $w=1$ to $w=2$ with $s$ fixed. The partition $\delta_3^{(2,3)}$ at the upper right is the initial generalized staircase partition in Example~\ref{ex:intermediate}.}
\label{fig:ws-directions}
\end{figure}

\FloatBarrier
\subsection{Macdonald polynomials and principal specializations}

Let $P_\lambda$ denote the monic Macdonald polynomial indexed by a partition
$\lambda$. Its specialization to $n$ variables is denoted by
\[
P_\lambda(x_1,\ldots,x_n;q,t).
\]
The parameters $q,t$ are independent indeterminates. For $n\in\mathbb Z_{>0}$, write
\[
\mathbf t_n=(1,t,\ldots,t^{n-1}),
\quad
\mathbf t_n^{-1}=(1,t^{-1},\ldots,t^{1-n}).
\]
A positive integer $n$ satisfying $n\ge\ell(\lambda)$ is called an admissible
number of variables for $\lambda$. For every such $n$, Macdonald's finite principal
specialization formula \cite[Chapter~VI, Equation~(6.11$'$)]{Macdonald} is
\begin{equation}\label{eq:macdonald-evaluation}
P_\lambda(\mathbf t_n;q,t)
=t^{n(\lambda)}
\prod_{u\in\lambda}
\frac{1-q^{a'_\lambda(u)}t^{n-l'_\lambda(u)}}
     {1-q^{a_\lambda(u)}t^{l_\lambda(u)+1}}.
\end{equation}

For indeterminates $a,y$ and $r\in\mathbb Z_{\ge0}$, the $y$-shifted factorial is
\[
(a;y)_r=\prod_{i=0}^{r-1}(1-ay^i),
\quad
(a;y)_0=1.
\]

For integers $a\ge b\ge0$, the Gaussian polynomial is
\[
\begin{bmatrix}a\\ b\end{bmatrix}_t
=\frac{(t;t)_a}{(t;t)_b(t;t)_{a-b}}.
\]

\section{Intermediate generalized staircase partitions}

\subsection{Horizontal and conjugate vertical sequences}

\begin{definition}\label{def:intermediate}
Let $w,s\in\mathbb Z_{>0}$ and $g\in\{0,1,\ldots,s\}$. Set
\[
\delta_{1,g}^{(w,s)}=(w^g),
\]
with $\delta_{1,0}^{(w,s)}=\varnothing$. For $k\ge2$, define
\begin{equation}\label{eq:intermediate}
\delta_{k,g}^{(w,s)}
=\bigl((wk)^g,(w(k-1))^s,(w(k-2))^s,\ldots,w^s\bigr).
\end{equation}
For $g=0$, the initial term $(wk)^g$ is omitted.
The partitions $\delta_{k,g}^{(w,s)}$ with $0\le g\le s$ are called
intermediate generalized staircase partitions.
\end{definition}
The endpoints are
\begin{equation}\label{eq:intermediate-endpoints}
\delta_{k,0}^{(w,s)}=\delta_k^{(w,s)},
\quad
\delta_{k,s}^{(w,s)}=\delta_{k+1}^{(w,s)}.
\end{equation}
The size and length of $\delta_{k,g}^{(w,s)}$ are
\begin{equation}\label{eq:intermediate-size-length}
\left\lvert\delta_{k,g}^{(w,s)}\right\rvert
=gwk+\frac{ws\,k(k-1)}{2},
\quad
\ell\bigl(\delta_{k,g}^{(w,s)}\bigr)=s(k-1)+g.
\end{equation}
The conjugate parts satisfy
\begin{equation}\label{eq:column-lengths}
\bigl(\delta_{k,g}^{(w,s)}\bigr)'_{wr+j}=s(k-1-r)+g,
\quad 0\le r\le k-1,\ 1\le j\le w.
\end{equation}

For $0\le g<s$, the skew diagram
$\delta_{k,g+1}^{(w,s)}/\delta_{k,g}^{(w,s)}$ consists of one box in each of
the first $wk$ columns. Hence it is a horizontal strip of size $wk$.
By \eqref{eq:intermediate}, the skew diagram
$\delta_{k+1}^{(w,s)}/\delta_k^{(w,s)}$ is the disjoint union of $k$ rectangular
blocks of shape $(w^s)$, no two of which share a row or a column.

\begin{proposition}\label{prop:shortest}
Let $k,w,s\in\mathbb Z_{>0}$. The minimum number of steps in a sequence of
partitions from $\delta_k^{(w,s)}$ to $\delta_{k+1}^{(w,s)}$ with each successive
difference a horizontal strip is $s$. The unique sequence attaining this minimum is
\[
\bigl(\delta_{k,g}^{(w,s)}\bigr)_{g=0}^{s}.
\]
\end{proposition}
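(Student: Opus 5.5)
The plan is to argue column by column. By \eqref{eq:column-lengths} with $g=0$ and $g=s$, the column lengths of $\delta_{k+1}^{(w,s)}$ exceed those of $\delta_k^{(w,s)}$ by exactly $s$ in each of the first $wk$ columns; columns beyond $wk$ are empty in both. A horizontal strip adds at most one box to each column. So along any sequence $\delta_k^{(w,s)}=\mu^{(0)}\subseteq\mu^{(1)}\subseteq\cdots\subseteq\mu^{(N)}=\delta_{k+1}^{(w,s)}$ whose successive differences are horizontal strips, column $1$ grows by at most one per step. Since it must grow by $s$, we get $N\ge s$. Existence of a sequence with $N=s$ is already recorded before the statement: each $\delta_{k,g+1}^{(w,s)}/\delta_{k,g}^{(w,s)}$ is a horizontal strip, and the endpoints are given by \eqref{eq:intermediate-endpoints}. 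Hence the minimum is $s$.

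For uniqueness, take any sequence with $N=s$. Each of the first $wk$ columns must grow by exactly $s$ in $s$ steps, adding at most one box per step. Therefore every step adds exactly one box to each of the first $wk$ columns, and no boxes elsewhere. By induction on $g$, the column lengths of $\mu^{(g)}$ are then $\bigl(\delta_k^{(w,s)}\bigr)'_c+g$ for $1\le c\le wk$, and $0$ for $c>wk$. By \eqref{eq:column-lengths}, these are exactly the column lengths of $\delta_{k,g}^{(w,s)}$. A partition is determined by its conjugate, so $\mu^{(g)}=\delta_{k,g}^{(w,s)}$.

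There is one subtle point. We must check that "at most one box per column" really forces the increment to be exactly one in every column at every step. This follows from a counting argument: each column has a total deficit of $s$ spread over $s$ steps, each step contributing at most $1$, so every step contributes exactly $1$. The argument never needs the partition condition on the intermediate $\mu^{(g)}$ beyond the fact that they are determined by their column lengths. The case $k=1$ (where $\delta_1^{(w,s)}=\varnothing$) is covered by the same formula \eqref{eq:column-lengths} with $r=0$, so no separate treatment is needed.
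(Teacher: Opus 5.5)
Your proof is correct and follows essentially the same route as the paper. The lower bound comes from the growth of $s$ boxes in each of the first $wk$ columns, at most one per step, and existence from the horizontal-strip property of $\bigl(\delta_{k,g}^{(w,s)}\bigr)_{g=0}^{s}$. Uniqueness follows because an $s$-step sequence must add exactly one box to each of those columns at every step, while the later columns stay empty (you argue by zero net growth, the paper by containment in $\delta_{k+1}^{(w,s)}$). Then \eqref{eq:column-lengths} identifies each term.
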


\begin{proof}
Each of the first $wk$ columns grows by $s$ boxes from $\delta_k^{(w,s)}$ to
$\delta_{k+1}^{(w,s)}$. A horizontal strip adds at most one box to each column,
hence at least $s$ steps are required. Equation~\eqref{eq:column-lengths}
shows that the successive differences in $(\delta_{k,g}^{(w,s)})_{g=0}^{s}$
are horizontal strips. This sequence attains the lower bound.

In any sequence of exactly $s$ steps, each of the first $wk$ columns receives one
box at every step. After $g$ steps, the length of column $wr+j$ is
\[
s(k-1-r)+g
\]
for $0\le r\le k-1$ and $1\le j\le w$. The sequence is increasing, hence every
partition in it is contained in $\delta_{k+1}^{(w,s)}$. In particular, all
columns after column $wk$ are empty. By \eqref{eq:column-lengths}, the partition
reached after $g$ steps is
$\delta_{k,g}^{(w,s)}$.
\end{proof}

The sequence $(\delta_{k,g}^{(w,s)})_{g=0}^{s}$ is called the horizontal
sequence. The index $g$ records the number of steps from the initial partition.

\begin{proposition}\label{prop:conjugation}
Let $k,w,s\in\mathbb Z_{>0}$ and $g\in\{0,1,\ldots,s\}$. For $k=1$,
\begin{equation}\label{eq:conjugate-k1}
\bigl(\delta_{1,g}^{(w,s)}\bigr)'=(g^w),
\end{equation}
where zero parts are omitted. For $k\ge2$,
\begin{equation}\label{eq:conjugate-intermediate}
\bigl(\delta_{k,g}^{(w,s)}\bigr)'
=\bigl((s(k-1)+g)^w,(s(k-2)+g)^w,\ldots,(s+g)^w,g^w\bigr),
\end{equation}
again with zero parts omitted. In particular,
\begin{equation}\label{eq:conjugate-endpoints}
\bigl(\delta_{k,0}^{(w,s)}\bigr)'=\delta_k^{(s,w)},
\quad
\bigl(\delta_{k,s}^{(w,s)}\bigr)'=\delta_{k+1}^{(s,w)}.
\end{equation}
For $0<g<s$, the conjugate partition $\bigl(\delta_{k,g}^{(w,s)}\bigr)'$ is
not equal to $\delta_{k,h}^{(s,w)}$ for any $h\in\{0,1,\ldots,w\}$.
Conjugation sends the horizontal strip
$\delta_{k,g+1}^{(w,s)}/\delta_{k,g}^{(w,s)}$ to a vertical strip of size $wk$
for $0\le g<s$. Moreover, every successive difference in
$\bigl((\delta_{k,g}^{(w,s)})'\bigr)_{g=0}^{s}$ is a vertical strip, and this
sequence is the unique shortest one between the conjugate endpoints with this
property.
\end{proposition}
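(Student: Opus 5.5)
The conjugate formulas come straight from the column lengths \eqref{eq:column-lengths}. The $j$th part of the conjugate is the length of column $j$. By \eqref{eq:column-lengths}, for $0\le r\le k-1$ the $w$ columns $wr+1,\ldots,wr+w$ all have length $s(k-1-r)+g$, and every column after $wk$ is empty. Listing these lengths for $r=0,1,\ldots,k-1$ gives \eqref{eq:conjugate-intermediate}. The case $k=1$ is the same computation, since the first $w$ columns of $(w^g)$ have length $g$; this gives \eqref{eq:conjugate-k1}. Setting $g=0$ gives $((s(k-1))^w,\ldots,s^w)=\delta_k^{(s,w)}$, since the final block $g^w$ disappears. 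Setting $g=s$ gives $((sk)^w,\ldots,s^w)=\delta_{k+1}^{(s,w)}$. Alternatively, \eqref{eq:conjugate-endpoints} follows from \eqref{eq:intermediate-endpoints} together with \eqref{eq:stair-conjugate}.

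For the non-membership claim with $0<g<s$, I would compare smallest nonzero parts. In $(\delta_{k,g}^{(w,s)})'$ the smallest nonzero part is $g$. Every $\delta_{k,h}^{(s,w)}$ with $k\ge2$ has parts that are multiples of $s$: it is $((sk)^h,(s(k-1))^w,\ldots,s^w)$. For $k=1$, $\delta_{1,h}^{(s,w)}=(s^h)$, which again has all parts equal to $s$. The exponent $h$ only counts multiplicities, so it never changes this. Since $0<g<s$, $g$ is not a multiple of $s$, so the two partitions differ. This argument has to check the $k=1$ case and the $h=0$ case (the empty partition when $k=1$) separately. For $k=1$, $h=0$ the target is $\varnothing$, while the conjugate $(g^w)$ is nonempty; this is where care is needed.

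Conjugation transposes the diagram. A skew shape with at most one box per column therefore becomes one with at most one box per row, and the size is preserved. So the horizontal strip $\delta_{k,g+1}^{(w,s)}/\delta_{k,g}^{(w,s)}$ of size $wk$ maps to a vertical strip of size $wk$. Conjugation also preserves containment. Hence it is a bijection between horizontal-strip sequences from $\delta_k^{(w,s)}$ to $\delta_{k+1}^{(w,s)}$ and vertical-strip sequences between the conjugate endpoints, and it preserves the number of steps. Minimality and uniqueness then transfer directly from Proposition~\ref{prop:shortest}. No step here is a real obstacle; the only delicate point is the boundary bookkeeping in the non-membership argument.
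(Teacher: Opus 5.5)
Your proposal is correct and follows the paper's proof. You read the conjugate off the column lengths, obtain the endpoints (directly or via \eqref{eq:intermediate-endpoints} and \eqref{eq:stair-conjugate}), and transfer minimality and uniqueness from Proposition~\ref{prop:shortest} through conjugation, which preserves containment and exchanges horizontal and vertical strips.

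The only variation is in the non-membership step. The paper compares largest parts: $s(k-1)+g$ lies strictly between $s(k-1)$ and $sk$. You instead note that the part $g$, with $0<g<s$, is not a multiple of $s$, whereas every part of $\delta_{k,h}^{(s,w)}$ is. Your version handles $k=1$ and $k\ge2$ uniformly. If you phrase it as ``has a part not divisible by $s$'', the empty target needs no separate case either.
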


\begin{proof}
For $k=1$, equation~\eqref{eq:conjugate-k1} follows directly from
$\delta_{1,g}^{(w,s)}=(w^g)$. For $k\ge2$, the column lengths in
\eqref{eq:column-lengths} are precisely those in
\eqref{eq:conjugate-intermediate}. The endpoint identities in
\eqref{eq:conjugate-endpoints} follow from \eqref{eq:intermediate-endpoints}
and \eqref{eq:stair-conjugate}.

Suppose that $0<g<s$. For $k=1$, the conjugate is nonempty and differs from
$\delta_{1,0}^{(s,w)}=\varnothing$. Every nonempty $\delta_{1,h}^{(s,w)}$ has
largest part $s$. The largest part of the conjugate is $g$. For $k\ge2$, the
largest part of the conjugate is $s(k-1)+g$, which lies strictly between
$s(k-1)$ and $sk$. The largest part of $\delta_{k,0}^{(s,w)}$ is $s(k-1)$, and
that of $\delta_{k,h}^{(s,w)}$ is $sk$ for $h>0$. No interior conjugate belongs
to the horizontal sequence with $w$ and $s$ interchanged.

For $0\le g<s$, conjugating the skew diagrams proves the assertion about vertical strips. Conjugation exchanges horizontal and vertical strips and preserves the
number of steps. The uniqueness and minimality of the conjugate vertical sequence follow from Proposition~\ref{prop:shortest}.
\end{proof}

The sequence $\bigl((\delta_{k,g}^{(w,s)})'\bigr)_{g=0}^{s}$ is called the
conjugate vertical sequence.

\begin{example}\label{ex:intermediate}
With $(k,w,s)=(3,2,3)$, the four intermediate generalized staircase
partitions, together with their sizes and lengths, are
\[
\begin{array}{c|c|c|c}
\hline
\rule{0pt}{2.8ex}g&\delta_{3,g}^{(2,3)}&\lvert\delta_{3,g}^{(2,3)}\rvert&\ell(\delta_{3,g}^{(2,3)})\\[2pt] \hline
\rule{0pt}{2.8ex}0&(4^3,2^3)&18&6\\
1&(6,4^3,2^3)&24&7\\
2&(6^2,4^3,2^3)&30&8\\
3&(6^3,4^3,2^3)&36&9\\[2pt]
\hline
\end{array}
\]
Each step therefore adds six boxes. The two interior conjugates are
\[
\bigl(\delta_{3,1}^{(2,3)}\bigr)'=(7^2,4^2,1^2),
\quad
\bigl(\delta_{3,2}^{(2,3)}\bigr)'=(8^2,5^2,2^2).
\]
With $w$ and $s$ interchanged, the horizontal sequence has the single interior
partition
\[
\delta_{3,1}^{(3,2)}=(9,6^2,3^2).
\]
Neither $\bigl(\delta_{3,1}^{(2,3)}\bigr)'$ nor
$\bigl(\delta_{3,2}^{(2,3)}\bigr)'$ is equal to
$\delta_{3,1}^{(3,2)}$. The conjugate vertical sequence is distinct from the
horizontal sequence obtained by interchanging $w$ and $s$.
\end{example}

Figure~\ref{fig:intermediate} displays the horizontal sequence in
Example~\ref{ex:intermediate}. For $g=1,2,3$, the highlighted boxes are those in
\[
\delta_{3,g}^{(2,3)}/\delta_{3,g-1}^{(2,3)}.
\]
They occupy the first six columns, one box in each column.

\begin{figure}[H]
\centering
\begin{tikzpicture}[x=0.28cm,y=-0.28cm, line width=0.35pt]
\begin{scope}[shift={(0,0)}]
\foreach \row/\len in {0/4,1/4,2/4,3/2,4/2,5/2}{
  \foreach \col in {0,...,5}{\ifnum\col<\len\draw (\col,\row) rectangle +(1,1);\fi}}
\end{scope}
\begin{scope}[shift={(9,0)}]
\foreach \col in {4,5}{\fill[fill=violet!22] (\col,0) rectangle +(1,1);}
\foreach \col in {2,3}{\fill[fill=violet!22] (\col,3) rectangle +(1,1);}
\foreach \col in {0,1}{\fill[fill=violet!22] (\col,6) rectangle +(1,1);}
\foreach \row/\len in {0/6,1/4,2/4,3/4,4/2,5/2,6/2}{
  \foreach \col in {0,...,5}{\ifnum\col<\len\draw (\col,\row) rectangle +(1,1);\fi}}
\end{scope}
\begin{scope}[shift={(20,0)}]
\foreach \col in {4,5}{\fill[fill=violet!22] (\col,1) rectangle +(1,1);}
\foreach \col in {2,3}{\fill[fill=violet!22] (\col,4) rectangle +(1,1);}
\foreach \col in {0,1}{\fill[fill=violet!22] (\col,7) rectangle +(1,1);}
\foreach \row/\len in {0/6,1/6,2/4,3/4,4/4,5/2,6/2,7/2}{
  \foreach \col in {0,...,5}{\ifnum\col<\len\draw (\col,\row) rectangle +(1,1);\fi}}
\end{scope}
\begin{scope}[shift={(31,0)}]
\foreach \col in {4,5}{\fill[fill=violet!22] (\col,2) rectangle +(1,1);}
\foreach \col in {2,3}{\fill[fill=violet!22] (\col,5) rectangle +(1,1);}
\foreach \col in {0,1}{\fill[fill=violet!22] (\col,8) rectangle +(1,1);}
\foreach \row/\len in {0/6,1/6,2/6,3/4,4/4,5/4,6/2,7/2,8/2}{
  \foreach \col in {0,...,5}{\ifnum\col<\len\draw (\col,\row) rectangle +(1,1);\fi}}
\end{scope}
\definecolor{gline}{RGB}{126,95,188}
\draw[->,thick,gline] (4.75,3.3) -- (8.25,3.3);
\draw[->,thick,gline] (15.75,3.3) -- (19.25,3.3);
\draw[->,thick,gline] (26.75,3.3) -- (30.25,3.3);
\node at (2,10.7) {$\delta_{3,0}^{(2,3)}$};
\node at (12,10.7) {$\delta_{3,1}^{(2,3)}$};
\node at (23,10.7) {$\delta_{3,2}^{(2,3)}$};
\node at (34,10.7) {$\delta_{3,3}^{(2,3)}$};
\end{tikzpicture}
\caption{The intermediate generalized staircase partitions in
Example~\ref{ex:intermediate}. For $g>0$, the highlighted boxes form the
horizontal strip added in passing from $g-1$ to $g$.}
\label{fig:intermediate}
\end{figure}

Figure~\ref{fig:conjugate-step} displays the two interior conjugates from
Example~\ref{ex:intermediate}.

\begin{figure}[H]
\centering
\begin{tikzpicture}[x=0.255cm,y=-0.255cm, line width=0.35pt]
\begin{scope}[shift={(0,0)}]
\foreach \row/\len in {0/7,1/7,2/4,3/4,4/1,5/1}{
  \foreach \col in {0,...,6}{\ifnum\col<\len\draw (\col,\row) rectangle +(1,1);\fi}}
\node at (3.5,7.4) {$\bigl(\delta_{3,1}^{(2,3)}\bigr)'$};
\end{scope}
\begin{scope}[shift={(15,0)}]
\foreach \row/\col in {0/7,1/7,2/4,3/4,4/1,5/1}{
  \fill[fill=violet!22] (\col,\row) rectangle +(1,1);}
\foreach \row/\len in {0/8,1/8,2/5,3/5,4/2,5/2}{
  \foreach \col in {0,...,7}{\ifnum\col<\len\draw (\col,\row) rectangle +(1,1);\fi}}
\node at (4,7.4) {$\bigl(\delta_{3,2}^{(2,3)}\bigr)'$};
\end{scope}
\definecolor{gline}{RGB}{126,95,188}
\draw[->,thick,gline] (8.0,3.0) -- (14.0,3.0);
\end{tikzpicture}
\caption{The conjugates of the $g=1,2$ diagrams in
Figure~\ref{fig:intermediate}. The highlighted boxes in the $g=2$ diagram form
the vertical strip conjugate to the step from $g=1$ to $g=2$ in
Figure~\ref{fig:intermediate}.}
\label{fig:conjugate-step}
\end{figure}

For $w,k\ge2$ and $0\le g<s$, adjoining $s$ trailing zero parts identifies
$\delta_{k,g}^{(w,s)}$ with a staircase partition considered by Colmenarejo,
Dunkl, and Luque~\cite[Section~6]{CDLfactor}. To distinguish their parameters
from ours, write their notation as $\operatorname{St}(\ell,\kappa;a;\beta)$.
The correspondence is
\[
(\ell,\kappa,a,\beta)=(s,g,w,k-1).
\]
Their restrictions $a\ge2$, $\beta\ge1$, and $0\le\kappa<\ell$ correspond
to $w\ge2$, $k\ge2$, and $0\le g<s$, respectively. In that work, $q$ and $t$
satisfy an admissible specialization. In the present paper, $q$ and $t$ are
independent indeterminates.

For $1\le g\le s$, the conjugate $(\delta_{k,g}^{(w,s)})'$ is the
quasistaircase shape considered by Carr\'e, Goncalves, and
Luque~\cite[Section~6.1]{CGL}, after trailing zero parts are omitted. Its
block length is $w$, its step size is $s$, and it has $k$ nonzero blocks with
initial offset $g$.

\subsection{Rectangular complementation}

\begin{proposition}\label{prop:intermediate-complement}
Let $k,w,s,n\in\mathbb Z_{>0}$ and $g\in\{0,1,\ldots,s\}$. Suppose that
\begin{equation}\label{eq:complement-range}
0\le n-s(k-1)-g\le s.
\end{equation}
The rectangular complement of $\delta_{k,g}^{(w,s)}$ in $((wk)^n)$ is
\[
\delta_{k,n-s(k-1)-g}^{(w,s)}.
\]
\end{proposition}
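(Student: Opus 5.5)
The plan is to compute the rectangular complement directly from the part multiplicities in \eqref{eq:intermediate}. Set $h=n-s(k-1)-g$, so $0\le h\le s$ by \eqref{eq:complement-range}. By \eqref{eq:intermediate-size-length}, $\ell(\delta_{k,g}^{(w,s)})=s(k-1)+g\le n$, and the largest part is at most $wk$. Hence $\delta_{k,g}^{(w,s)}\subseteq((wk)^n)$ and the complement is defined.

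First I will write $\delta_{k,g}^{(w,s)}$ as an $n$-tuple by adjoining trailing zeros. Reading from the top, it has $g$ parts $wk$, then $s$ parts each of $w(k-1),\ldots,w$, and finally $n-s(k-1)-g=h$ parts equal to $0$. Reversing this tuple and subtracting each entry from $wk$ gives the following. There are $h$ parts $wk-0=wk$. Then come $s$ parts each of $wk-w=w(k-1)$, $wk-2w=w(k-2)$, and so on, down to $wk-w(k-1)=w$. The last $g$ parts are $wk-wk=0$. After dropping the zeros, this is exactly $\bigl((wk)^h,(w(k-1))^s,\ldots,w^s\bigr)=\delta_{k,h}^{(w,s)}$ by \eqref{eq:intermediate}.

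The only point needing care is the bookkeeping of boundary cases. When $g=0$ or $h=0$ the corresponding block is empty; this matches the convention in Definition~\ref{def:intermediate} that $(wk)^0$ is omitted. For $k=1$ the middle blocks are absent: $\delta_{1,g}^{(w,s)}=(w^g)$ sits inside $(w^n)$ with $n=g+h$, and its complement is $(w^h)=\delta_{1,h}^{(w,s)}$. Beyond this, the result is a direct verification, and I expect no real obstacle.

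As a cross-check, the sizes are consistent. By \eqref{eq:intermediate-size-length}, $\lvert\delta_{k,g}^{(w,s)}\rvert+\lvert\delta_{k,h}^{(w,s)}\rvert=(g+h)wk+wsk(k-1)=wk\bigl(g+h+s(k-1)\bigr)=wkn$, which is the area of the rectangle.
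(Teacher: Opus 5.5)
Your proposal is correct and follows essentially the same route as the paper. Both check containment via the length formula, treat $k=1$ separately, pad $\delta_{k,g}^{(w,s)}$ with $h=n-s(k-1)-g$ trailing zeros, and reverse-and-subtract to obtain $\bigl((wk)^h,(w(k-1))^s,\ldots,w^s,0^g\bigr)$; your closing size check is the computation the paper uses for the converse in Corollary~\ref{cor:pair-complement}, and it is not needed here.
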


\begin{proof}
The lower bound in \eqref{eq:complement-range} and the length formula in
\eqref{eq:intermediate-size-length} imply
\[
n\ge s(k-1)+g=\ell\bigl(\delta_{k,g}^{(w,s)}\bigr).
\]
Therefore $\delta_{k,g}^{(w,s)}\subseteq((wk)^n)$. The upper bound in
\eqref{eq:complement-range} ensures that $n-s(k-1)-g\in\{0,1,\ldots,s\}$.
If $k=1$, the rectangular complement of $(w^g)$ in $(w^n)$ is
$(w^{n-g})=\delta_{1,n-g}^{(w,s)}$.
For $k\ge2$, after adjoining trailing zeros,
\[
\delta_{k,g}^{(w,s)}
=\bigl((wk)^g,(w(k-1))^s,\ldots,w^s,0^{n-s(k-1)-g}\bigr).
\]
Its rectangular complement in $((wk)^n)$ is
\[
\bigl((wk)^{n-s(k-1)-g},(w(k-1))^s,\ldots,w^s,0^g\bigr).
\]
The nonzero parts are precisely those of
$\delta_{k,n-s(k-1)-g}^{(w,s)}$.
\end{proof}

\begin{corollary}\label{cor:pair-complement}
Let $k,w,s,n\in\mathbb Z_{>0}$ and $g,h\in\{0,1,\ldots,s\}$. Suppose that
$n\ge\ell(\delta_{k,g}^{(w,s)})$. The partition $\delta_{k,h}^{(w,s)}$ is the
rectangular complement of $\delta_{k,g}^{(w,s)}$ in $((wk)^n)$ if and only if
\begin{equation}\label{eq:pair-complement}
n=s(k-1)+g+h.
\end{equation}
\end{corollary}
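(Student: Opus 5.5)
The plan is to split into the two directions, deducing the forward direction from Proposition~\ref{prop:intermediate-complement} and the converse from a size count.

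\emph{If direction.} Assume \eqref{eq:pair-complement}. Then $n-s(k-1)-g=h\in\{0,1,\ldots,s\}$, so \eqref{eq:complement-range} holds. Proposition~\ref{prop:intermediate-complement} then says that the rectangular complement of $\delta_{k,g}^{(w,s)}$ in $((wk)^n)$ is $\delta_{k,h}^{(w,s)}$.

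\emph{Only if direction.} Assume $\delta_{k,h}^{(w,s)}$ is the rectangular complement. The hypothesis $n\ge\ell(\delta_{k,g}^{(w,s)})=s(k-1)+g$, together with the fact that every part of $\delta_{k,g}^{(w,s)}$ is at most $wk$, gives $\delta_{k,g}^{(w,s)}\subseteq((wk)^n)$. So the complement is defined, and
\[
\lvert\delta_{k,g}^{(w,s)}\rvert+\lvert\delta_{k,h}^{(w,s)}\rvert=wkn .
\]
Substituting the size formula \eqref{eq:intermediate-size-length} gives
\[
(g+h)wk+ws\,k(k-1)=wkn .
\]
Dividing by $wk>0$ yields $n=s(k-1)+g+h$, which is \eqref{eq:pair-complement}.

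The size computation holds uniformly for $k=1$ and $k\ge2$, so no case split is needed. The only point needing care is confirming containment in the rectangle, which the length hypothesis supplies. I expect no real obstacle.
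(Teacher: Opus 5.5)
Your proposal is correct and follows the same route as the paper: the forward direction comes from Proposition~\ref{prop:intermediate-complement} with complementary index $h$, and the converse comes from the size identity $\lvert\delta_{k,g}^{(w,s)}\rvert+\lvert\delta_{k,h}^{(w,s)}\rvert=wk\{s(k-1)+g+h\}=nwk$ followed by division by $wk$. Your explicit check that the length hypothesis gives containment in $((wk)^n)$ is a small addition that the paper leaves implicit in the converse.
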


\begin{proof}
Assume \eqref{eq:pair-complement}. The complementary index in
Proposition~\ref{prop:intermediate-complement} is
$n-s(k-1)-g=h$, which lies in $\{0,1,\ldots,s\}$.
Proposition~\ref{prop:intermediate-complement} identifies the rectangular
complement with $\delta_{k,h}^{(w,s)}$.

Conversely, suppose that $\delta_{k,h}^{(w,s)}$ is the rectangular complement
of $\delta_{k,g}^{(w,s)}$ in $((wk)^n)$. By
\eqref{eq:intermediate-size-length},
\[
\left|\delta_{k,g}^{(w,s)}\right|
+\left|\delta_{k,h}^{(w,s)}\right|
=wk\{s(k-1)+g+h\}.
\]
The left-hand side equals the size $nwk$ of the rectangle. Dividing by $wk$
gives \eqref{eq:pair-complement}.
\end{proof}

For $n=sk$, the complementary index in
Proposition~\ref{prop:intermediate-complement} is $s-g$.

\begin{corollary}\label{cor:complement-sk}
Let $k,w,s\in\mathbb Z_{>0}$ and $g\in\{0,1,\ldots,s\}$. The rectangular complement
of $\delta_{k,g}^{(w,s)}$ in $((wk)^{sk})$ is
\[
\delta_{k,s-g}^{(w,s)}.
\]
\end{corollary}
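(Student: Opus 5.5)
This is a direct specialization of Proposition~\ref{prop:intermediate-complement} (equivalently of Corollary~\ref{cor:pair-complement}) to the number of rows $n=sk$.

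First I check the hypothesis \eqref{eq:complement-range} with $n=sk$. The middle quantity is
\[
n-s(k-1)-g=sk-s(k-1)-g=s-g.
\]
Since $g\in\{0,1,\ldots,s\}$, we get $0\le s-g\le s$, so \eqref{eq:complement-range} holds for every admissible $g$. Also $sk$ is a positive integer because $k,s\in\mathbb Z_{>0}$, so $n=sk$ is a legitimate choice in the proposition.

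Next I invoke Proposition~\ref{prop:intermediate-complement}. It says that $\delta_{k,g}^{(w,s)}\subseteq((wk)^{sk})$ and that its rectangular complement there is $\delta_{k,s-g}^{(w,s)}$, which is the claim.

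As a cross-check, the same conclusion follows from Corollary~\ref{cor:pair-complement} with $h=s-g$. Indeed, $s(k-1)+g+h=sk=n$, and the length condition $n\ge s(k-1)+g$ is equivalent to $g\le s$. There is no real obstacle here. The only points to verify are that the complementary index lies in $\{0,\ldots,s\}$ and that the case $k=1$ is covered; the proposition already handles $k=1$, where the complement of $(w^g)$ in $(w^s)$ is $(w^{s-g})=\delta_{1,s-g}^{(w,s)}$.
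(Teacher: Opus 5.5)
Your proposal is correct and matches the paper's route: the paper obtains this corollary by specializing Proposition~\ref{prop:intermediate-complement} to $n=sk$, where the complementary index $n-s(k-1)-g$ equals $s-g\in\{0,1,\ldots,s\}$. Your cross-check via Corollary~\ref{cor:pair-complement} with $h=s-g$ is also valid.
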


Figure~\ref{fig:complement} illustrates the reflection in
Corollary~\ref{cor:complement-sk} for the parameters of
Example~\ref{ex:intermediate}.

\begin{figure}[H]
\centering
\begin{tikzpicture}[x=0.255cm,y=-0.255cm, line width=0.35pt]
\definecolor{compfill}{RGB}{250,232,204}
\definecolor{compline}{RGB}{188,126,54}
\begin{scope}[shift={(0,0)}]
\foreach \row/\len in {0/6,1/4,2/4,3/4,4/2,5/2,6/2,7/0,8/0}{
  \foreach \col in {0,...,5}{
    \ifnum\col<\len\else\fill[fill=compfill] (\col,\row) rectangle +(1,1);\fi
    \draw (\col,\row) rectangle +(1,1);
  }}
\node[align=center] at (3,10.4) {$\delta_{3,1}^{(2,3)}\subseteq(6^9)$};
\end{scope}
\begin{scope}[shift={(13.5,0)}]
\draw[dashed,gray] (0,0) rectangle (6,9);
\foreach \row/\len in {0/6,1/4,2/4,3/4,4/2,5/2,6/2,7/0,8/0}{
  \foreach \col in {0,...,5}{
    \ifnum\col<\len\else
      \fill[fill=compfill] (\col,\row) rectangle +(1,1);
      \draw (\col,\row) rectangle +(1,1);
    \fi
  }}
\end{scope}
\begin{scope}[shift={(27.0,0.5)}]
\foreach \row/\len in {0/6,1/6,2/4,3/4,4/4,5/2,6/2,7/2}{
  \foreach \col in {0,...,5}{
    \ifnum\col<\len
      \fill[fill=compfill] (\col,\row) rectangle +(1,1);
      \draw (\col,\row) rectangle +(1,1);
    \fi
  }}
\node at (3,9.7) {$\delta_{3,2}^{(2,3)}$};
\end{scope}
\draw[->,thick,compline] (7.1,4.5) -- (12.3,4.5)
  node[midway,above=3pt,text=compline] {\scriptsize complement};
\draw[->,thick,compline] (20.6,4.5) -- (25.8,4.5)
  node[midway,above=3pt,text=compline] {\scriptsize $180^\circ$};
\end{tikzpicture}
\caption{Rectangular complementation for the two interior partitions in
Example~\ref{ex:intermediate}. The middle diagram is the complement of
$\delta_{3,1}^{(2,3)}=(6,4^3,2^3)$ in $(6^9)$. Its $180^\circ$ rotation is
$\delta_{3,2}^{(2,3)}=(6^2,4^3,2^3)$.}
\label{fig:complement}
\end{figure}

Rectangular complementation in $((wk)^{sk})$ reverses the horizontal sequence under
$g\mapsto s-g$, unlike conjugation.

\section{Macdonald principal specialization ratios and coefficientwise nonnegativity}

\subsection{Horizontal ratios}

\begin{lemma}\label{lem:cancellation}
Let $\mu$ be a partition and let $m\in\mathbb Z_{>0}$ satisfy $m\ge\mu_1$.
Set $\lambda=(m,\mu)$. The inclusion $(i,j)\longmapsto(i,j)$ from the boxes
of $\mu$ to the boxes of $\lambda$ preserves coarm and coleg lengths. The map
$(i,j)\longmapsto(i+1,j)$ is a bijection from the boxes of $\mu$ to the boxes
of $\lambda$ below the first row, preserving arm and leg lengths. For every
$n\in\mathbb Z_{>0}$ with $\ell(\lambda)\le n$, forming
\[
\frac{P_\lambda(\mathbf t_n;q,t)}{P_\mu(\mathbf t_n;q,t)}
\]
from \eqref{eq:macdonald-evaluation} leaves the numerator factors indexed by
$\lambda/\mu$ and the denominator factors indexed by the first row of
$\lambda$.
\end{lemma}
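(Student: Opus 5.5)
The plan is to verify the two combinatorial claims about arm, leg, coarm and coleg lengths directly from the definitions, and then read off the cancellation in \eqref{eq:macdonald-evaluation}. Since $\lambda=(m,\mu)$ with $m\ge\mu_1$, we have $\lambda_1=m$ and $\lambda_{i+1}=\mu_i$ for $i\ge1$. For columns, this gives $\lambda'_j=\mu'_j+1$ for $1\le j\le\mu_1$; more generally $\lambda'_j=\mu'_j+1$ for all $1\le j\le m$, with $\mu'_j=0$ for $j>\mu_1$. First, for the inclusion $(i,j)\mapsto(i,j)$: since $\mu_i=\lambda_{i+1}\le\lambda_i$, we have $\mu\subseteq\lambda$, so the map is well defined. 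Coarm $j-1$ and coleg $i-1$ depend only on the position, so they are preserved.

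Second, for the shift $(i,j)\mapsto(i+1,j)$: the image is $\{(i',j)\in\lambda: i'\ge2\}$, because $j\le\mu_i=\lambda_{i+1}$. The map is clearly bijective onto the boxes below the first row. The arm is preserved since $a_\lambda(i+1,j)=\lambda_{i+1}-j=\mu_i-j=a_\mu(i,j)$. The leg is preserved since $l_\lambda(i+1,j)=\lambda'_j-(i+1)=\mu'_j-i=l_\mu(i,j)$, using $\lambda'_j=\mu'_j+1$ for $j\le\mu_1$.

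Next, form the ratio. Take $n\ge\ell(\lambda)=\ell(\mu)+1$, so $n$ is admissible for both $\lambda$ and $\mu$ and \eqref{eq:macdonald-evaluation} applies to both. The numerator factors $1-q^{a'(u)}t^{n-l'(u)}$ depend only on coarm and coleg. By the first bijection, the factors of $P_\mu$ cancel against those of $P_\lambda$ indexed by boxes of $\mu\subseteq\lambda$, leaving exactly the numerator factors indexed by $\lambda/\mu$. The denominator factors $1-q^{a(u)}t^{l(u)+1}$ depend only on arm and leg. By the second bijection, the factors of $P_\mu$ cancel against those of $P_\lambda$ for the boxes below the first row, leaving the denominator factors of the first row of $\lambda$. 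The monomial prefactor becomes $t^{n(\lambda)-n(\mu)}$; the statement concerns only the product factors, though one may note that this exponent equals $|\mu|$.

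There is no real obstacle here. The only point needing care is the identity $\lambda'_j=\mu'_j+1$ on the relevant columns, together with the observation that the two cancellations use different bijections, one for the numerator and one for the denominator. This is legitimate because each product over $u\in\lambda$ may be reindexed independently.
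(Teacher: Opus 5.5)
Your proposal is correct and follows the paper's proof essentially step for step: you use $\lambda_{i+1}=\mu_i$ and $\lambda'_j=\mu'_j+1$ to get $\mu\subseteq\lambda$ and to show that the shifted map preserves arm and leg lengths, then cancel the numerator factors via the inclusion and the denominator factors via the shift. Your side remark that the leftover monomial is $t^{n(\lambda)-n(\mu)}=t^{|\mu|}$ is exactly the fact the paper uses next, in the proof of Theorem~\ref{thm:local-ratio}.
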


\begin{proof}
Since $m\ge\mu_1$, $\mu\subseteq\lambda$. The coarm and coleg lengths
of a box $(i,j)\in\mu$ are $j-1$ and $i-1$, independently of the ambient
partition. Moreover, $\lambda_{i+1}=\mu_i$. The identity
$\lambda'_j=\mu'_j+1$ holds for every column containing a box of $\mu$. Hence
\[
a_\lambda(i+1,j)=a_\mu(i,j),
\qquad
l_\lambda(i+1,j)=l_\mu(i,j).
\]
Every box of $\lambda$ below the first row arises uniquely in this way. The
first correspondence cancels the common numerator factors. The second
cancels the denominator factors of $P_\mu$ against those of $P_\lambda$ below
the first row.
\end{proof}

Successive intermediate generalized staircase partitions satisfy
$\delta_{k,g+1}^{(w,s)}=(wk,\delta_{k,g}^{(w,s)})$. The skew diagram
$\delta_{k,g+1}^{(w,s)}/\delta_{k,g}^{(w,s)}$ is a horizontal strip of size $wk$.

\begin{theorem}\label{thm:local-ratio}
Let $k,w,s,n\in\mathbb Z_{>0}$ and $g\in\{0,1,\ldots,s-1\}$. Suppose that
$n\ge\ell\bigl(\delta_{k,g+1}^{(w,s)}\bigr)$. The successive ratio is
\begin{equation}\label{eq:local-ratio}
\frac{P_{\delta_{k,g+1}^{(w,s)}}(\mathbf t_n;q,t)}
     {P_{\delta_{k,g}^{(w,s)}}(\mathbf t_n;q,t)}
=t^{gwk+\frac{ws\,k(k-1)}2}
\prod_{r=0}^{k-1}\prod_{j=1}^{w}
\frac{1-q^{j-1+wr}t^{n-s(k-1)-g+sr}}
     {1-q^{j-1+wr}t^{g+1+sr}}.
\end{equation}
\end{theorem}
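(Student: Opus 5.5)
The plan is to apply Lemma~\ref{lem:cancellation} with $\mu=\delta_{k,g}^{(w,s)}$ and $m=wk$. This is legitimate because $\delta_{k,g+1}^{(w,s)}=(wk,\delta_{k,g}^{(w,s)})$ and $wk\ge\mu_1$. After the cancellation, three pieces of \eqref{eq:macdonald-evaluation} remain to be evaluated:
\begin{itemize}
\item the ratio $t^{n(\lambda)-n(\mu)}$;
\item the numerator factors over the boxes of $\lambda/\mu$;
\item the denominator factors over the first row of $\lambda$.
\end{itemize}

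For the monomial, prepending a row of length $wk$ shifts every row of $\mu$ down by one. This gives $n(\lambda)=n(\mu)+\lvert\mu\rvert$. By \eqref{eq:intermediate-size-length}, $\lvert\mu\rvert=gwk+ws\,k(k-1)/2$, which is the stated exponent of $t$.

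For the numerator, the boxes of $\lambda/\mu$ form a horizontal strip with exactly one box in each column $c=wr+j$, where $0\le r\le k-1$ and $1\le j\le w$. The box in column $c$ is the bottom box of that column in $\lambda$. Its row index is therefore $\lambda'_c$, which by \eqref{eq:column-lengths} with $g+1$ equals $s(k-1-r)+g+1$. So its coleg is $s(k-1-r)+g$ and its coarm is $c-1=wr+j-1$. The numerator factor is
\[
1-q^{wr+j-1}t^{n-s(k-1)-g+sr}.
\]
This matches \eqref{eq:local-ratio} term by term.

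For the denominator, consider the first-row box $(1,c)$ of $\lambda$. Its leg is $\lambda'_c-1=s(k-1-r)+g$, and its arm is $wk-c=w(k-1-r)+(w-j)$. Reindexing by $r\mapsto k-1-r$ and $j\mapsto w+1-j$ turns the arm into $wr+j-1$ and the leg plus one into $sr+g+1$. This yields exactly the denominator product in \eqref{eq:local-ratio}.

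The only delicate point is this simultaneous reindexing in the denominator. Everything else is direct substitution. One should also note that $n\ge\ell(\lambda)\ge\ell(\mu)$, so both specializations are valid instances of \eqref{eq:macdonald-evaluation}.
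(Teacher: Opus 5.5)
Your proposal is correct and follows the paper's proof essentially step for step. Both apply Lemma~\ref{lem:cancellation} to $\delta_{k,g+1}^{(w,s)}=(wk,\delta_{k,g}^{(w,s)})$, use $n((m,\mu))-n(\mu)=\lvert\mu\rvert$, and read off the coarm/coleg of the added strip and the arm/leg of the first row from \eqref{eq:column-lengths}. The only cosmetic difference is that the paper indexes the first-row box directly by column $wk-wr-j+1$, whereas you index it by $c=wr+j$ and then reindex $(r,j)\mapsto(k-1-r,\,w+1-j)$, which amounts to the same thing.
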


\begin{proof}
For a partition $(m,\mu)$,
$n((m,\mu))-n(\mu)=|\mu|$. Since
$\delta_{k,g+1}^{(w,s)}=(wk,\delta_{k,g}^{(w,s)})$, the exponent of $t$
changes by
\[
n\bigl(\delta_{k,g+1}^{(w,s)}\bigr)-n\bigl(\delta_{k,g}^{(w,s)}\bigr)
=\left|\delta_{k,g}^{(w,s)}\right|
=gwk+\frac{ws\,k(k-1)}2,
\]
where the last equality is \eqref{eq:intermediate-size-length}. By
Lemma~\ref{lem:cancellation}, the uncancelled numerator factors are indexed by
the added horizontal strip, and the uncancelled denominator factors are
indexed by the first row. By \eqref{eq:column-lengths}, the added box in
column $wr+j$ lies in row
\[
s(k-1-r)+g+1
\]
for $0\le r\le k-1$ and $1\le j\le w$. Its coarm and coleg lengths are
$j-1+wr$ and $s(k-1-r)+g$, respectively. The numerator factors are
\[
\prod_{r=0}^{k-1}\prod_{j=1}^{w}
\left(1-q^{j-1+wr}t^{n-s(k-1)-g+sr}\right).
\]
For $0\le r\le k-1$ and $1\le j\le w$, consider the box in the first
row and column $wk-wr-j+1$. By \eqref{eq:column-lengths}, this column of
$\delta_{k,g+1}^{(w,s)}$ has length $g+sr+1$. The arm and leg lengths of the
box are therefore $j-1+wr$ and $g+sr$, respectively.
The denominator factors are
\[
\prod_{r=0}^{k-1}\prod_{j=1}^{w}
\left(1-q^{j-1+wr}t^{g+1+sr}\right).
\]
Combining these factors with the power of $t$ proves \eqref{eq:local-ratio}.
\end{proof}

For fixed $j$, the dependence on $r$ in each factor is through $(q^wt^s)^r$.
Hence
\begin{equation*}
\frac{P_{\delta_{k,g+1}^{(w,s)}}(\mathbf t_n;q,t)}
     {P_{\delta_{k,g}^{(w,s)}}(\mathbf t_n;q,t)}
=t^{gwk+\frac{ws\,k(k-1)}2}
\prod_{j=1}^{w}
\frac{(q^{j-1}t^{n-s(k-1)-g};q^wt^s)_k}
     {(q^{j-1}t^{g+1};q^wt^s)_k}.
\end{equation*}

Multiplication of successive horizontal ratios yields the ratio between any
two positions in the horizontal sequence.

\begin{corollary}\label{cor:arbitrary-gh}
Let $k,w,s,n\in\mathbb Z_{>0}$ and $0\le g<h\le s$. Suppose that
$n\ge\ell\bigl(\delta_{k,h}^{(w,s)}\bigr)$. The ratio is
\begin{equation}\label{eq:arbitrary-gh}
\frac{P_{\delta_{k,h}^{(w,s)}}(\mathbf t_n;q,t)}
     {P_{\delta_{k,g}^{(w,s)}}(\mathbf t_n;q,t)}
=t^{\frac{wk(h-g)\{s(k-1)+g+h-1\}}2}
\prod_{r=0}^{k-1}\prod_{j=1}^{w}
\frac{(q^{j-1+wr}t^{n-s(k-1)-h+1+sr};t)_{h-g}}
     {(q^{j-1+wr}t^{g+1+sr};t)_{h-g}}.
\end{equation}
\end{corollary}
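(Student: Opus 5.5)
The plan is to telescope the successive ratios of Theorem~\ref{thm:local-ratio}. Write
\[
\frac{P_{\delta_{k,h}^{(w,s)}}(\mathbf t_n;q,t)}{P_{\delta_{k,g}^{(w,s)}}(\mathbf t_n;q,t)}
=\prod_{p=g}^{h-1}\frac{P_{\delta_{k,p+1}^{(w,s)}}(\mathbf t_n;q,t)}{P_{\delta_{k,p}^{(w,s)}}(\mathbf t_n;q,t)} .
\]
Each factor on the right is covered by Theorem~\ref{thm:local-ratio}, once its hypothesis is checked. For $g\le p\le h-1$, the length formula \eqref{eq:intermediate-size-length} gives $\ell(\delta_{k,p+1}^{(w,s)})=s(k-1)+p+1\le s(k-1)+h\le n$. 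Every intermediate partition has nonzero specialization, since the factors in \eqref{eq:macdonald-evaluation} are nonzero polynomials in independent indeterminates. The division is therefore legitimate.

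Next I would collect the powers of $t$. The exponent is
\[
\sum_{p=g}^{h-1}\Bigl(pwk+\frac{wsk(k-1)}2\Bigr)
=wk\,\frac{(h-g)(g+h-1)}2+(h-g)\frac{wsk(k-1)}2 .
\]
This equals $\tfrac{wk(h-g)\{s(k-1)+g+h-1\}}2$, as claimed.

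For the product part, I would fix $r$ and $j$ and set $x=q^{j-1+wr}$. In the denominators, the factor for step $p$ is $1-xt^{p+1+sr}$. As $p$ runs over $g,\dots,h-1$, the exponent runs through $g+1+sr,\dots,h+sr$, and the product is $(xt^{g+1+sr};t)_{h-g}$. In the numerators, the factor for step $p$ is $1-xt^{n-s(k-1)-p+sr}$. As $p$ runs over $g,\dots,h-1$, the exponent runs downward from $n-s(k-1)-g+sr$ to $n-s(k-1)-h+1+sr$. Reindexing from the smallest exponent gives $(xt^{n-s(k-1)-h+1+sr};t)_{h-g}$. The double product over $r$ and $j$ commutes with the product over $p$, and this yields \eqref{eq:arbitrary-gh}.

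The only point needing care is the reversed range in the numerator. There one must check that the smallest exponent is $n-s(k-1)-h+1+sr$ and that the count of factors is exactly $h-g$. The rest is a direct computation, so I expect no real obstacle.
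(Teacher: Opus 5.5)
Your proposal is correct and follows the paper's proof: you multiply the successive ratios from Theorem~\ref{thm:local-ratio} over the steps from $g$ to $h$, after checking admissibility with \eqref{eq:intermediate-size-length}. You then sum the exponents of $t$ and, for each fixed $(r,j)$, regroup the numerator and denominator factors into the two $t$-shifted factorials of length $h-g$, exactly as the paper does.
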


\begin{proof}
By \eqref{eq:intermediate-size-length}, Theorem~\ref{thm:local-ratio}
applies to every step from $g$ to $h$. The exponent of $t$ in their product is
\[
\sum_{a=g}^{h-1}\left(awk+\frac{ws\,k(k-1)}2\right)
=\frac{wk(h-g)\{s(k-1)+g+h-1\}}2.
\]
For fixed $(r,j)$, the numerator factors satisfy
\[
\prod_{a=g}^{h-1}\left(1-q^{j-1+wr}t^{n-s(k-1)-a+sr}\right)
=(q^{j-1+wr}t^{n-s(k-1)-h+1+sr};t)_{h-g},
\]
and the denominator factors satisfy
\[
\prod_{a=g}^{h-1}\left(1-q^{j-1+wr}t^{a+1+sr}\right)
=(q^{j-1+wr}t^{g+1+sr};t)_{h-g}.
\]
This proves \eqref{eq:arbitrary-gh}.
\end{proof}

Taking the lower index to be $0$ in Corollary~\ref{cor:arbitrary-gh} gives the cumulative
ratio from the initial generalized staircase.

\begin{corollary}\label{cor:arbitrary-g}
Let $k,w,s,n\in\mathbb Z_{>0}$ and $g\in\{0,1,\ldots,s\}$. Suppose that
$n\ge\ell\bigl(\delta_{k,g}^{(w,s)}\bigr)$. The ratio is
\begin{equation}\label{eq:arbitrary-g}
\frac{P_{\delta_{k,g}^{(w,s)}}(\mathbf t_n;q,t)}
     {P_{\delta_k^{(w,s)}}(\mathbf t_n;q,t)}
=t^{\frac{wkg(g-1)}2+\frac{wgs\,k(k-1)}2}
\prod_{r=0}^{k-1}\prod_{j=1}^{w}
\frac{(q^{j-1+wr}t^{n-s(k-1)-g+1+sr};t)_g}
     {(q^{j-1+wr}t^{1+sr};t)_g}.
\end{equation}
\end{corollary}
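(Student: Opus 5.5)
The plan is to specialize Corollary~\ref{cor:arbitrary-gh} to the lower index $0$ and upper index $g$. The case $g=0$ is treated separately: both sides equal $1$, since the exponent of $t$ vanishes and every $(\,\cdot\,;t)_0$ is an empty product. For $1\le g\le s$, Corollary~\ref{cor:arbitrary-gh} applies with $(g,h)$ replaced by $(0,g)$. Its hypothesis $n\ge\ell(\delta_{k,g}^{(w,s)})$ is exactly the assumption here. The denominator partition $\delta_{k,0}^{(w,s)}$ equals $\delta_k^{(w,s)}$ by \eqref{eq:intermediate-endpoints}.

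Next, I will match the two factors.

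\textbf{Power of $t$.} Substituting $g\to0$ and $h\to g$, the exponent in \eqref{eq:arbitrary-gh} becomes
\[
\frac{wkg\{s(k-1)+g-1\}}2=\frac{wkg(g-1)}2+\frac{wgs\,k(k-1)}2,
\]
which is the exponent in \eqref{eq:arbitrary-g}.

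\textbf{Product.} The numerator base $q^{j-1+wr}t^{n-s(k-1)-h+1+sr}$ becomes $q^{j-1+wr}t^{n-s(k-1)-g+1+sr}$ with length $h-g=g$. The denominator base $q^{j-1+wr}t^{g+1+sr}$ with the old lower index set to $0$ becomes $q^{j-1+wr}t^{1+sr}$. These agree termwise with \eqref{eq:arbitrary-g}.

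There is no real obstacle here. The only care needed is bookkeeping: the letter $g$ in the statement plays the role of $h$ in Corollary~\ref{cor:arbitrary-gh}, and the old $g$ is set to zero. Alternatively, one could telescope Theorem~\ref{thm:local-ratio} directly over $a=0,\dots,g-1$, summing $awk+ws\,k(k-1)/2$ and collecting the factors for each fixed $(r,j)$ into $t$-shifted factorials. That route gives the same result.
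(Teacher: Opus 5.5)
Your proposal is correct and follows the paper's proof exactly. The paper also handles $g=0$ separately, then applies Corollary~\ref{cor:arbitrary-gh} with lower index $0$ and upper index $g$, using \eqref{eq:intermediate-endpoints} to identify $\delta_{k,0}^{(w,s)}$ with $\delta_k^{(w,s)}$; your exponent and shifted-factorial bookkeeping matches \eqref{eq:arbitrary-g}.
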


\begin{proof}
For $g=0$, both sides are $1$ by \eqref{eq:intermediate-endpoints}. For
$g>0$, apply Corollary~\ref{cor:arbitrary-gh}
with lower index $0$ and upper index $g$. Equation~\eqref{eq:intermediate-endpoints}
identifies the denominator with $P_{\delta_k^{(w,s)}}(\mathbf t_n;q,t)$.
\end{proof}

Expanding the $t$-shifted factorials in \eqref{eq:arbitrary-g} and grouping the
factors with fixed $(j,h)$ yields
\begin{equation}\label{eq:arbitrary-g-qwt}
\frac{P_{\delta_{k,g}^{(w,s)}}(\mathbf t_n;q,t)}
     {P_{\delta_k^{(w,s)}}(\mathbf t_n;q,t)}
=t^{\frac{wkg(g-1)}2+\frac{wgs\,k(k-1)}2}
\prod_{j=1}^{w}\prod_{h=0}^{g-1}
\frac{(q^{j-1}t^{n-s(k-1)-h};q^wt^s)_k}
     {(q^{j-1}t^{h+1};q^wt^s)_k}.
\end{equation}

\begin{example}\label{ex:macdonald-ratio}
For $g=1$ in Example~\ref{ex:intermediate}, take $n=9=sk$. The parameters are
$(k,w,s,g,n)=(3,2,3,1,9)$. The two partitions are
\[
\delta_3^{(2,3)}=(4^3,2^3),
\qquad
\delta_{3,1}^{(2,3)}=(6,4^3,2^3).
\]
Equation~\eqref{eq:arbitrary-g-qwt} becomes
\[
\frac{P_{(6,4^3,2^3)}(\mathbf t_9;q,t)}
     {P_{(4^3,2^3)}(\mathbf t_9;q,t)}
=t^{18}\prod_{j=1}^{2}
\frac{(q^{j-1}t^3;q^2t^3)_3}
     {(q^{j-1}t;q^2t^3)_3}.
\]
At $q=0$, the ratio is $t^{18}(1+t+t^2)$.
\end{example}

For $g=s$, \eqref{eq:intermediate-endpoints} identifies the ratio in
\eqref{eq:arbitrary-g-qwt} with the ratio for two consecutive generalized
staircase partitions.

\begin{corollary}\label{cor:consecutive}
Let $k,w,s,n\in\mathbb Z_{>0}$ with $n\ge sk$. The endpoint ratio is
\begin{equation}\label{eq:endpoint-ratio}
\frac{P_{\delta_{k+1}^{(w,s)}}(\mathbf t_n;q,t)}
     {P_{\delta_k^{(w,s)}}(\mathbf t_n;q,t)}
=t^{\frac{wsk(sk-1)}2}
\prod_{i=1}^{s}\prod_{j=1}^{w}
\frac{(q^{j-1}t^{n-s(k-1)-i+1};q^wt^s)_k}
     {(q^{j-1}t^{s-i+1};q^wt^s)_k}.
\end{equation}
\end{corollary}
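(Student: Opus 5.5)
The plan is to specialize \eqref{eq:arbitrary-g-qwt} at $g=s$ and then re-index the product over $h$. First, the hypothesis $n\ge sk$ must be checked: by \eqref{eq:intermediate-size-length} the length of $\delta_{k,s}^{(w,s)}$ is $s(k-1)+s=sk$, so $n\ge sk$ is exactly the condition $n\ge\ell(\delta_{k,s}^{(w,s)})$ required in Corollary~\ref{cor:arbitrary-g}. With $g=s$, the endpoint identity \eqref{eq:intermediate-endpoints} replaces $\delta_{k,s}^{(w,s)}$ by $\delta_{k+1}^{(w,s)}$ in the numerator of the left-hand side. Hence the left-hand side of \eqref{eq:arbitrary-g-qwt} becomes the left-hand side of \eqref{eq:endpoint-ratio}.

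Next I will simplify the power of $t$. At $g=s$ the exponent is
\[
\frac{wks(s-1)}2+\frac{ws^2k(k-1)}2=\frac{wsk\{(s-1)+s(k-1)\}}2=\frac{wsk(sk-1)}2,
\]
which matches the exponent in \eqref{eq:endpoint-ratio}.

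Finally, I will re-index the double product. In \eqref{eq:arbitrary-g-qwt} with $g=s$, the index $h$ runs over $0,\ldots,s-1$. Setting $h=i-1$ with $i\in\{1,\ldots,s\}$ turns each numerator factor into $(q^{j-1}t^{n-s(k-1)-i+1};q^wt^s)_k$, which is already the form required. Each denominator factor becomes $(q^{j-1}t^{i};q^wt^s)_k$. Over the full range of $i$, the map $i\mapsto s-i+1$ is a bijection of $\{1,\ldots,s\}$, so the product of these denominators equals $\prod_{i=1}^{s}(q^{j-1}t^{s-i+1};q^wt^s)_k$. Because the numerator and denominator products over $i$ are independent of each other, I can reverse the index in the denominator alone. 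This yields \eqref{eq:endpoint-ratio}.

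The only point needing care is this independent reversal of the $i$-index in the denominator. It is legitimate because the product over $i$ factors as a quotient of two separate products. Everything else in the argument is direct substitution.
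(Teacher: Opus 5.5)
Your proposal is correct and matches the paper's proof. Both specialize \eqref{eq:arbitrary-g-qwt} at $g=s$, simplify the exponent to $\frac{wsk(sk-1)}2$, and reindex with $i=h+1$ in the numerator and $i=s-h$ in the denominator; your $h=i-1$ followed by $i\mapsto s-i+1$ is the same substitution, and the admissibility check $n\ge\ell(\delta_{k,s}^{(w,s)})=sk$ that you add is a correct detail the paper leaves implicit.
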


\begin{proof}
At $g=s$, the exponent of $t$ in \eqref{eq:arbitrary-g-qwt} is
\[
\frac{wsk(s-1)}2+\frac{ws^2k(k-1)}2
=\frac{wsk(sk-1)}2.
\]
In the numerator, set $i=h+1$. In the denominator, set $i=s-h$. With these
substitutions, \eqref{eq:arbitrary-g-qwt} becomes \eqref{eq:endpoint-ratio}.
\end{proof}

\subsection{Coefficientwise nonnegativity and rectangular complementation}

\begin{lemma}\label{lem:general-nonnegativity}
Let $\lambda\subseteq\mu$ be partitions and let $n\ge\ell(\mu)$. The series
\[
t^{n(\lambda)-n(\mu)}
\frac{P_\mu(\mathbf t_n;q,t)}{P_\lambda(\mathbf t_n;q,t)}
\]
belongs to $\mathbb Z[[q,t]]$. If it is coefficientwise nonnegative, it is independent of $q$ and equals
\[
t^{n(\lambda)-n(\mu)}
\frac{P_\mu(\mathbf t_n;0,t)}{P_\lambda(\mathbf t_n;0,t)}.
\]
\end{lemma}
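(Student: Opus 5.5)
The plan is to write the normalized ratio as an explicit finite product, using \eqref{eq:macdonald-evaluation} for $\mu$ and for $\lambda$. Denote
\[
F(q,t)=t^{n(\lambda)-n(\mu)}\frac{P_\mu(\mathbf t_n;q,t)}{P_\lambda(\mathbf t_n;q,t)}.
\]

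\emph{Step 1: product formula.} Coarm and coleg lengths of a box do not depend on the ambient partition; this is the observation already used in Lemma~\ref{lem:cancellation}. Hence the numerator factors of $P_\lambda$ cancel against those of $P_\mu$ indexed by boxes of $\lambda$. This gives
\[
F(q,t)=\prod_{u\in\mu/\lambda}\bigl(1-q^{a'(u)}t^{n-l'(u)}\bigr)\cdot
\frac{\prod_{u\in\lambda}\bigl(1-q^{a_\lambda(u)}t^{l_\lambda(u)+1}\bigr)}
{\prod_{u\in\mu}\bigl(1-q^{a_\mu(u)}t^{l_\mu(u)+1}\bigr)}.
\]
Since $l'(u)\le\ell(\mu)-1\le n-1$, every factor has the form $1-q^at^b$ with $b\ge1$. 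Each denominator factor is therefore invertible in $\mathbb Z[[q,t]]$ via $\sum_{i\ge0}q^{ai}t^{bi}$, and $F\in\mathbb Z[[q,t]]$.

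\emph{Step 2: coefficients of powers of $t$.} Every monomial $q^at^b$ occurring in a factor has $b\ge1$. So the coefficient of $t^d$ in $F$ is a polynomial $c_d(q)\in\mathbb Z[q]$, and $F=\sum_{d\ge0}c_d(q)t^d$ with $c_0=1$. Assume $F$ is coefficientwise nonnegative. Then $c_d(q)-c_d(0)$ has nonnegative coefficients. Consequently $c_d(1)\ge c_d(0)$, with equality for all $d$ exactly when $F$ is independent of $q$.

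Setting $q=0$ in $F$ gives the $q=0$ specialization of each factor. This is legitimate because every factor is a polynomial in $q$ whose $q$-free term comes from the boxes with $a'=0$ or $a=0$. The $q=0$ specialization therefore equals $t^{n(\lambda)-n(\mu)}P_\mu(\mathbf t_n;0,t)/P_\lambda(\mathbf t_n;0,t)$. So it suffices to prove $\sum_d c_d(1)t^d\le\sum_d c_d(0)t^d$ coefficientwise, or some replacement inequality.

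\emph{Step 3: the main obstacle.} The difficulty is producing a specialization that forces the $q$-dependent part to vanish. The comparison with $q=1$ is the first thing I would try. At $q=1$ the product becomes
\[
F(1,t)=\prod_{u\in\mu/\lambda}(1-t^{n-l'(u)})\prod_{u\in\lambda}(1-t^{l_\lambda(u)+1})\Big/\prod_{u\in\mu}(1-t^{l_\mu(u)+1}).
\]
I would compare this with $F(0,t)$ through a column-by-column rearrangement of the hook factors. I expect this comparison not to be termwise obvious. A more robust route, to try if that fails, is analytic. Fix real $0<t<1$ and let $q\to1^-$. The coefficientwise bound yields $F(q,t)\ge F(0,t)$, and the difference $F(q,t)-F(0,t)$ has nonnegative coefficients and is nondecreasing in $q$. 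The aim would then be a contradiction from the order of vanishing at $t\to1$: compare the number of factors vanishing at $t=1$ in $F(1,t)$ and in $F(0,t)$. Choosing the box of $\mu/\lambda$ with the largest coarm should give a negative leading $q$-term that no denominator expansion can compensate. I anticipate that the final argument will identify a lowest-degree monomial $q^at^b$ with $a>0$ whose coefficient is forced to be negative whenever $F$ genuinely depends on $q$.
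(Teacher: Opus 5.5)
\textbf{There is a genuine gap in Step~3.} Your Steps~1 and~2 are sound: the cancelled product formula, the membership $F\in\mathbb Z[[q,t]]$, the expansion $F=\sum_d c_d(q)t^d$ with $c_d\in\mathbb Z[q]$, and the remark that $q$-independence yields the Hall--Littlewood expression. But the actual content of the lemma, that nonnegativity forces $q$-independence, is never proved. Step~3 only lists attempts, and none of them works as stated.

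Your target inequality $F(1,t)\le F(0,t)$ is false without the hypothesis. For $\lambda=\varnothing$, $\mu=(2)$, $n=2$, your own formula gives $F(1,t)=(1+t)^2$ and $F(0,t)=1+t$. So any such inequality would have to use nonnegativity in a way you do not indicate. Counting factors that vanish at $t=1$ gives nothing. After discarding factors equal to $1$, both $F(1,t)$ and $F(0,t)$ have as many factors $1-t^b$ upstairs as downstairs, so both are finite and nonzero at $t=1$. The heuristic that the box of $\mu/\lambda$ with largest coarm produces a negative leading $q$-term is also unfounded. In the same example the coefficient of $q$ is $(1+t)(t-t^2)=t-t^3$, whose lowest term is positive. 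The negative coefficient appears only at $qt^3$, after partial cancellation with the expansion of $1/(1-qt)$. Exhibiting a negative coefficient explicitly for an arbitrary nested pair is hard in general, and the lemma is designed to avoid it. The paper extracts coefficients only for its specific sequences, after the lemma has already given $q$-independence.

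\textbf{The missing idea is to let $t\to1^-$ at each fixed $q\in(0,1)$.} This replaces comparing $q=1$ with $q=0$, and also replaces letting $q\to1^-$ at fixed $t$, which only gives $F(0,t)\le F(q,t)\le F(1,t)$ and is compatible with $q$-dependence. Take $\nu\in\{\lambda,\mu\}$ and a column $j\ge2$. The $\nu'_j$ numerator factors in \eqref{eq:macdonald-evaluation} containing $q^{j-1}$ match the $\nu'_j$ denominator factors with arm $j-1$, one in each row of length at least $j$. All of them tend to $1-q^{j-1}\ne0$, so they cancel in the limit. The remaining factors are the $q$-free ones: first column upstairs, ends of rows downstairs. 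They are equal in number, and their quotient has a finite nonzero limit that does not involve $q$. Hence $L=\lim_{t\to1^-}F(q,t)$ is finite and the same for every $q\in[0,1)$.

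If $F$ is coefficientwise nonnegative, then $c_d(q)\ge0$ for $0\le q<1$, and monotone convergence gives $\sum_d c_d(q)=L=\sum_d c_d(0)$. Each $c_d(q)-c_d(0)$ is nonnegative and they sum to zero, so every one vanishes. Thus $c_d(q)=c_d(0)$ for all $d$, which is the $q$-independence you needed; this is the paper's argument in your notation. The example above shows where the information lies. There $\lim_{t\to1^-}F(q,t)=2$ for every $q<1$, whereas $F(1,1)=4$, and under nonnegativity these two values would have to coincide.
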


\begin{proof}
Formula~\eqref{eq:macdonald-evaluation} expresses the displayed series as a finite product of factors $1-q^at^b$ and their inverses. Every inverted factor has $a\ge0$ and $b\ge1$. Its reciprocal expands in $\mathbb Z[[q,t]]$, which proves the first assertion. For $0<q,t<1$, each such expansion converges absolutely to the corresponding reciprocal factor.

Fix $0<q<1$. For each $j\ge2$, the numerator factors of $P_\lambda(\mathbf t_n;q,t)$ in \eqref{eq:macdonald-evaluation} contributed by the boxes in column $j$ contain $q^{j-1}$. The denominator has the same number $\lambda'_j$ of factors containing $q^{j-1}$, one for each row of length at least $j$. Their quotient tends to $1$ as $t\to1^-$. The factors containing no positive power of $q$ are indexed by the boxes in the first column in the numerator and by the rightmost boxes of the rows in the denominator. Hence
\[
\lim_{t\to1^-}P_\lambda(\mathbf t_n;q,t)
=
\frac{\prod\limits_{i=1}^{\ell(\lambda)}(n-i+1)}
{\prod\limits_{\substack{u\in\lambda\\ a_\lambda(u)=0}}(l_\lambda(u)+1)},
\]
which does not depend on $q$. The same calculation applies to $\mu$.

The displayed series has a finite limit as $t\to1^-$ that is independent of $q$. If its coefficients are nonnegative, monotone convergence implies that the sum obtained by setting $t=1$ is finite. Consequently, for each fixed power of $q$, the sum of the coefficients over all powers of $t$ is finite. The resulting series in $q$ is independent of $q$. Any positive coefficient of a positive power of $q$ makes this series strictly increase on $0<q<1$. Hence every coefficient of a positive power of $q$ vanishes. The stated equality follows by setting $q=0$.
\end{proof}

\begin{theorem}\label{thm:nonnegative-complement}
Let $k,w,s,n\in\mathbb Z_{>0}$ and $0\le g<h\le s$. Suppose that
$wk\ge2$ and $n\ge\ell(\delta_{k,h}^{(w,s)})$. The following conditions are
equivalent.
\begin{enumerate}
\renewcommand{\labelenumi}{(\arabic{enumi})}
\item The ratio
\[
\frac{P_{\delta_{k,h}^{(w,s)}}(\mathbf t_n;q,t)}
     {P_{\delta_{k,g}^{(w,s)}}(\mathbf t_n;q,t)}
\]
is coefficientwise nonnegative in $\mathbb Z[[q,t]]$.
\item The ratio is a power of $t$.
\item The number of variables satisfies $n=s(k-1)+g+h$.
\item The partition $\delta_{k,h}^{(w,s)}$ is the rectangular complement of
$\delta_{k,g}^{(w,s)}$ in $((wk)^n)$.
\end{enumerate}
Under these conditions,
\begin{equation}\label{eq:nonnegative-power}
\frac{P_{\delta_{k,h}^{(w,s)}}(\mathbf t_n;q,t)}
     {P_{\delta_{k,g}^{(w,s)}}(\mathbf t_n;q,t)}
=t^{\frac{wk(h-g)(n-1)}2}.
\end{equation}
\end{theorem}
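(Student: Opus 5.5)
The plan is to read everything off the explicit formula \eqref{eq:arbitrary-gh} of Corollary~\ref{cor:arbitrary-gh}. Set $A=n-s(k-1)-h+1$ and $c=j-1+wr$. As $(r,j)$ ranges over $0\le r\le k-1$, $1\le j\le w$, the exponent $c$ runs through $0,1,\ldots,wk-1$, each value exactly once. The ratio is then a power of $t$ times
\[
\prod_{r=0}^{k-1}\prod_{j=1}^{w}\frac{(q^{c}t^{A+sr};t)_{h-g}}{(q^{c}t^{g+1+sr};t)_{h-g}}.
\]

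The equivalence (3)$\Leftrightarrow$(4) is exactly Corollary~\ref{cor:pair-complement}. Its hypothesis $n\ge\ell(\delta_{k,g}^{(w,s)})$ holds because $\ell(\delta_{k,h}^{(w,s)})\ge\ell(\delta_{k,g}^{(w,s)})$.

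For (3)$\Rightarrow$(2), I substitute $n=s(k-1)+g+h$, which gives $A=g+1$. Every numerator $t$-shifted factorial then coincides with the corresponding denominator factorial, so the product equals $1$. The remaining exponent is $\tfrac{wk(h-g)\{s(k-1)+g+h-1\}}2$, and $s(k-1)+g+h-1=n-1$. This gives \eqref{eq:nonnegative-power}. The implication (2)$\Rightarrow$(1) is immediate, since a power of $t$ is coefficientwise nonnegative.

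The substantive step is (1)$\Rightarrow$(3). First I apply Lemma~\ref{lem:general-nonnegativity} with $\lambda=\delta_{k,g}^{(w,s)}\subseteq\mu=\delta_{k,h}^{(w,s)}$. Multiplying by the monomial $t^{n(\lambda)-n(\mu)}$ does not affect nonnegativity, so the series is independent of $q$. Hence the displayed product above, viewed in $\mathbb Z[[q,t]]$, has vanishing coefficient of $q^1$.

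To compute that coefficient, note that factors with $c\ge2$ contribute only $1+O(q^2)$. Factors with $c=0$ contribute a $q$-free series $F(t)=\prod_{r}(t^{A+sr};t)_{h-g}/(t^{g+1+sr};t)_{h-g}$. This series is nonzero, since $A\ge1$ by $n\ge\ell(\delta_{k,h}^{(w,s)})$. The hypothesis $wk\ge2$ guarantees that exactly one pair $(r,j)$ has $c=1$: it is $(0,2)$ if $w\ge2$ and $(1,1)$ if $w=1$. Call its $sr$-value $\sigma$. The $q^1$ coefficient is then
\[
F(t)\Bigl(\sum_{i=0}^{h-g-1}t^{g+1+\sigma+i}-\sum_{i=0}^{h-g-1}t^{A+\sigma+i}\Bigr).
\]
Its vanishing forces the two geometric sums to agree, and comparing lowest exponents gives $A=g+1$. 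That is precisely $n=s(k-1)+g+h$, which is (3).

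The main obstacle is making this coefficient extraction rigorous. One must check that the $c=1$ factor is unique, which is where $wk\ge2$ enters: for $wk=1$ there is no $c=1$ factor at all. One must also check that $F(t)$ is a nonzero element of $\mathbb Z[[t]]$, so that it can be cancelled.
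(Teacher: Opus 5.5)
Your proposal is correct and follows the paper's proof essentially step by step. It gets (3)$\Leftrightarrow$(4) from Corollary~\ref{cor:pair-complement}, (3)$\Rightarrow$(2)$\Rightarrow$(1) by cancellation in \eqref{eq:arbitrary-gh}, and (1)$\Rightarrow$(3) from Lemma~\ref{lem:general-nonnegativity} together with the coefficient of $q$ from the unique factor with $j-1+wr=1$, at $(r,j)=(0,2)$ or $(1,1)$. One small slip: only the single pair $(r,j)=(0,1)$ has $c=0$, so the $q$-free prefactor is $(t^{A};t)_{h-g}/(t^{g+1};t)_{h-g}$, not a product over $r$; the argument is unaffected, because it uses only that this prefactor is nonzero.
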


\begin{proof}
After division by the displayed power of $t$ in \eqref{eq:arbitrary-gh},
condition~(1) and Lemma~\ref{lem:general-nonnegativity} imply that the
remaining product is independent of $q$.

Its constant term in $q$ is
\begin{equation}\label{eq:q0-arbitrary}
\frac{(t^{n-s(k-1)-h+1};t)_{h-g}}
     {(t^{g+1};t)_{h-g}}.
\end{equation}
The integer $j-1+wr$ in \eqref{eq:arbitrary-gh} runs through
$0,1,\ldots,wk-1$ exactly once. Since $wk\ge2$, the value $1$ occurs exactly
once. Factors with $j-1+wr\ge2$ contain no term of degree one in $q$. For $w\ge2$, the value $1$ occurs at $(r,j)=(0,2)$, and after removal of the displayed power of $t$ the coefficient of $q$ is
\begin{equation}\label{eq:q1-wge2}
\frac{(t^{n-s(k-1)-h+1};t)_{h-g}}
     {(t^{g+1};t)_{h-g}}
\sum_{v=0}^{h-g-1}
\left(t^{g+1+v}-t^{n-s(k-1)-h+1+v}\right).
\end{equation}
For $w=1$, the assumption $wk\ge2$ implies $k\ge2$, and the value $1$ occurs
at $(r,j)=(1,1)$. The corresponding coefficient is
\begin{equation}\label{eq:q1-w1}
t^s
\frac{(t^{n-s(k-1)-h+1};t)_{h-g}}
     {(t^{g+1};t)_{h-g}}
\sum_{v=0}^{h-g-1}
\left(t^{g+1+v}-t^{n-s(k-1)-h+1+v}\right).
\end{equation}
The factor in \eqref{eq:q0-arbitrary} is a nonzero element of $\mathbb Z[[t]]$.
The prefactors in \eqref{eq:q1-wge2} and \eqref{eq:q1-w1} are therefore nonzero.
The product obtained from \eqref{eq:arbitrary-gh} after removal of the displayed power of $t$
is independent of $q$, and its coefficient of $q$ vanishes. The finite sum in
\eqref{eq:q1-wge2} and \eqref{eq:q1-w1} is
\[
(1+t+\cdots+t^{h-g-1})
\left(t^{g+1}-t^{n-s(k-1)-h+1}\right),
\]
and vanishes if and only if
\[
n=s(k-1)+g+h.
\]
This proves condition~(1) implies condition~(3).

Under condition~(3), every shifted factorial in the numerator of
\eqref{eq:arbitrary-gh} equals the corresponding shifted factorial in the
denominator. Hence
\[
\frac{P_{\delta_{k,h}^{(w,s)}}(\mathbf t_n;q,t)}
     {P_{\delta_{k,g}^{(w,s)}}(\mathbf t_n;q,t)}
=t^{\frac{wk(h-g)\{s(k-1)+g+h-1\}}2}
=t^{\frac{wk(h-g)(n-1)}2}.
\]
Thus condition~(3) implies condition~(2), and condition~(2) implies
condition~(1). Conditions~(3) and~(4) are equivalent by
Corollary~\ref{cor:pair-complement}.
\end{proof}

Luque~\cite[Corollary~3.3]{LuqueSubrectangular} establishes a complement relation
in finitely many variables for subrectangular Macdonald polynomials through inversion
of the alphabet. After principal specialization, that relation implies the
direction from rectangular complementation to a power of $t$ in
Theorem~\ref{thm:nonnegative-complement}. Lemma~\ref{lem:general-nonnegativity} together with the coefficient formulas
\eqref{eq:q1-wge2} and \eqref{eq:q1-w1} proves the converse implication within the
horizontal sequence.

For $wk=1$, necessarily $k=w=1$ and $\delta_{1,g}^{(1,s)}=(1^g)$. For
$0\le g<h\le s$ and $n\ge h$, Corollary~\ref{cor:arbitrary-gh} reduces to
\begin{equation}\label{eq:wk1-ratio}
\frac{P_{\delta_{1,h}^{(1,s)}}(\mathbf t_n;q,t)}
     {P_{\delta_{1,g}^{(1,s)}}(\mathbf t_n;q,t)}
=t^{\frac{(h-g)(g+h-1)}2}
\frac{(t^{n-h+1};t)_{h-g}}{(t^{g+1};t)_{h-g}}.
\end{equation}
For $g=0$, the numerator partition in \eqref{eq:wk1-ratio} is $(1^h)$ and
$P_{(1^h)}=e_h$. Hence
\[
P_{(1^h)}(\mathbf t_n;q,t)
=\sum_{0\le i_1<\cdots<i_h<n}t^{i_1+\cdots+i_h},
\]
which has nonnegative coefficients for every $n\ge h$. Thus the assumption
$wk\ge2$ in Theorem~\ref{thm:nonnegative-complement} is essential. For $wk=1$,
reduction of the ratio to a power of $t$, the equality $n=g+h$, and rectangular
complementation are still equivalent. Comparison of degrees in \eqref{eq:wk1-ratio}
establishes the equivalence of the first two properties. The equivalence of the latter two
follows from Corollary~\ref{cor:pair-complement}. The case $g=0$ in \eqref{eq:wk1-ratio} shows that coefficientwise nonnegativity need not imply
these properties.

\begin{corollary}\label{cor:local-positivity}
Let $k,w,s,n\in\mathbb Z_{>0}$ and $g\in\{0,1,\ldots,s-1\}$. Suppose that
$n\ge\ell\bigl(\delta_{k,g+1}^{(w,s)}\bigr)$. If $wk\ge2$, the successive
ratio in \eqref{eq:local-ratio} is coefficientwise nonnegative as a formal
power series in $q,t$ if and only if
\[
n=s(k-1)+2g+1.
\]
In this case,
\[
\frac{P_{\delta_{k,g+1}^{(w,s)}}(\mathbf t_n;q,t)}
     {P_{\delta_{k,g}^{(w,s)}}(\mathbf t_n;q,t)}
=t^{\frac{wk\{s(k-1)+2g\}}2}.
\]
If $wk=1$, the ratio is coefficientwise nonnegative if and only if $n-g$ is
divisible by $g+1$.
\end{corollary}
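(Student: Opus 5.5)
The case $wk\ge2$ should follow directly from Theorem~\ref{thm:nonnegative-complement} with $h=g+1$, and the case $wk=1$ from the explicit ratio \eqref{eq:wk1-ratio} together with an elementary check on a quotient of $t$-factors.

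\emph{Case $wk\ge2$.} The hypotheses of Theorem~\ref{thm:nonnegative-complement} hold with $h=g+1$: we have $0\le g<g+1\le s$ and $n\ge\ell(\delta_{k,g+1}^{(w,s)})$. By the equivalence of conditions~(1) and~(3) there, the successive ratio in \eqref{eq:local-ratio} is coefficientwise nonnegative if and only if
\[
n=s(k-1)+g+(g+1)=s(k-1)+2g+1.
\]
In that case \eqref{eq:nonnegative-power} gives the exponent
\[
\frac{wk\bigl((g+1)-g\bigr)(n-1)}{2}=\frac{wk\{s(k-1)+2g\}}{2},
\]
which is the asserted power of $t$.

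\emph{Case $wk=1$.} Here $k=w=1$ and $\delta_{1,g}^{(1,s)}=(1^g)$. Setting $h=g+1$ in \eqref{eq:wk1-ratio} gives
\[
\frac{P_{(1^{g+1})}(\mathbf t_n;q,t)}{P_{(1^g)}(\mathbf t_n;q,t)}
=t^{g}\,\frac{1-t^{a}}{1-t^{b}},\qquad a=n-g,\quad b=g+1.
\]
This ratio is independent of $q$. The hypothesis $n\ge g+1$ gives $a\ge1$, and $b\ge1$. Nonnegativity is therefore equivalent to nonnegativity of
\[
(1-t^a)\sum_{m\ge0}t^{bm}\in\mathbb Z[[t]].
\]

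If $b\mid a$, this series equals $1+t^b+t^{2b}+\cdots+t^{a-b}$, which is nonnegative. If $b\nmid a$, consider the coefficient of $t^a$. The sum $\sum_{m\ge0}t^{bm}$ contributes $0$ to this coefficient, since $a$ is not a multiple of $b$. The term $-t^a\cdot1$ contributes $-1$. So this coefficient is $-1$, and nonnegativity fails. Hence the ratio is nonnegative exactly when $g+1$ divides $n-g$.

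\emph{Main difficulty.} None of the steps is hard, because Theorem~\ref{thm:nonnegative-complement} does all the real work. The only points needing care are checking that its hypotheses hold for $h=g+1$, and in the one-column case confirming that $a\ge1$, so that the monomial $-t^a$ actually appears in the expansion.
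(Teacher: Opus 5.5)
Your proof is correct and follows the paper's argument. For $wk\ge2$ you specialize Theorem~\ref{thm:nonnegative-complement} to $h=g+1$; for $wk=1$ you reduce the ratio to $t^g(1-t^{n-g})/(1-t^{g+1})$ and settle nonnegativity by the divisibility check, exactly as the paper does (the paper reads this off \eqref{eq:local-ratio} directly rather than from \eqref{eq:wk1-ratio}, which makes no difference).
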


\begin{proof}
For $wk\ge2$, apply Theorem~\ref{thm:nonnegative-complement} with $h=g+1$.
For $wk=1$, \eqref{eq:local-ratio} reduces to
\[
t^g\frac{1-t^{n-g}}{1-t^{g+1}}.
\]
The quotient has nonnegative coefficients exactly if $g+1$ divides $n-g$.
\end{proof}

\begin{example}\label{ex:local-negativity}
At $(k,w,s,g,n)=(1,2,1,0,2)$, equation~\eqref{eq:local-ratio} becomes
\[
\frac{P_{(2)}(1,t;q,t)}{P_{\varnothing}(1,t;q,t)}
=(1+t)\frac{1-qt^2}{1-qt}
=1+t+\sum_{m\ge1}q^m(t^m-t^{m+2}).
\]
The coefficient of $qt^3$ is $-1$.
\end{example}

\subsection{Conjugate vertical ratios}

The vertical analogue follows from adding one box to each of a fixed number of rows.

\begin{lemma}\label{lem:column-shift}
Let $\lambda$ be a partition and let $m,n\in\mathbb Z_{>0}$ satisfy
$n\ge m\ge\ell(\lambda)$. Write $\lambda=(\lambda_1,\ldots,\lambda_m)$,
adjoining trailing zeros if necessary. The ratio is
\begin{equation}\label{eq:column-shift}
\frac{P_{\lambda+(1^m)}(\mathbf t_n;q,t)}
     {P_\lambda(\mathbf t_n;q,t)}
=t^{\frac{m(m-1)}2}
\prod_{i=1}^{m}
\frac{1-q^{\lambda_i}t^{n-i+1}}
     {1-q^{\lambda_i}t^{m-i+1}}.
\end{equation}
\end{lemma}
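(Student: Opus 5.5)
We prove \eqref{eq:column-shift} directly from Macdonald's formula \eqref{eq:macdonald-evaluation}, in the same spirit as Lemma~\ref{lem:cancellation} but with the roles of rows and columns interchanged. Set $\mu=\lambda+(1^m)$, so $\mu_i=\lambda_i+1$ for $1\le i\le m$ and $\mu_i=0$ for $i>m$; since $m\ge\ell(\lambda)$, $\mu$ is a partition with $\ell(\mu)=m\le n$, so $n$ is admissible for both. The boxes of $\mu$ split into the first column $\{(i,1):1\le i\le m\}$ and the shifted boxes $(i,j+1)$ with $(i,j)\in\lambda$.

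First the power of $t$: $n(\mu)-n(\lambda)=\sum_{i=1}^m(i-1)=m(m-1)/2$. Next the denominators (hook factors). The map $(i,j)\mapsto(i,j+1)$ sends $\lambda$ bijectively onto the boxes of $\mu$ outside column $1$ and preserves arm and leg, since $\mu_i-(j+1)=\lambda_i-j$ and $\mu'_{j+1}=\lambda'_j$. Hence all denominator factors of $P_\lambda$ cancel against those of $P_\mu$ outside the first column. The remaining denominator factors come from boxes $(i,1)$ of $\mu$, with arm $\mu_i-1=\lambda_i$ and leg $m-i$, giving $\prod_{i=1}^m(1-q^{\lambda_i}t^{m-i+1})$.

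For the numerators (coarm/coleg factors), the shift does not preserve coarm, so I would instead organize by rows. In row $i$ of $\mu$, the numerator factors are $\prod_{a=0}^{\mu_i-1}(1-q^at^{n-i+1})$, while in row $i$ of $\lambda$ they are $\prod_{a=0}^{\lambda_i-1}(1-q^at^{n-i+1})$. Their quotient is the single factor $1-q^{\lambda_i}t^{n-i+1}$ for $1\le i\le m$; rows $i>m$ are empty in both. Equivalently, the inclusion $\lambda\subseteq\mu$ preserves coarm and coleg, and the surviving boxes are those of the vertical strip $\mu/\lambda$, namely $(i,\lambda_i+1)$, with coarm $\lambda_i$ and coleg $i-1$. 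Combining the three pieces yields \eqref{eq:column-shift}.

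The only point requiring care is the denominator bijection when $\lambda$ has zero parts among the first $m$ rows: then row $i$ of $\mu$ is the single box $(i,1)$, with arm $0=\lambda_i$ and leg $m-i$, which the general formula already covers. I expect no genuine obstacle; the main check is verifying $\mu'_{j+1}=\lambda'_j$ for $j\ge1$, which holds because $\mu_i\ge j+1$ exactly when $\lambda_i\ge j$ and $i\le m$, and $\lambda_i\ge j\ge1$ forces $i\le m$.
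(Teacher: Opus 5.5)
Your proof is correct and matches the paper's argument: the inclusion $\lambda\subseteq\lambda+(1^m)$ cancels every numerator factor except those of the added boxes $(i,\lambda_i+1)$, the shift $(i,j)\mapsto(i,j+1)$ cancels the denominator factors outside the first column, and $n(\lambda+(1^m))-n(\lambda)=m(m-1)/2$. Your row-by-row count of the numerator factors is only a restatement of the same cancellation, and your check that $(\lambda+(1^m))'_{j+1}=\lambda'_j$ supplies a detail the paper states without proof.
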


\begin{proof}
Each box $(i,j)\in\lambda$ has the same coarm and coleg lengths in $\lambda$
and $\lambda+(1^m)$. The common numerator factors in
\eqref{eq:macdonald-evaluation} cancel, leaving the factors indexed by the
added boxes $(i,\lambda_i+1)$:
\[
\prod_{i=1}^{m}\left(1-q^{\lambda_i}t^{n-i+1}\right).
\]
In the denominator, the map $(i,j)\longmapsto(i,j+1)$ is a bijection from the
boxes of $\lambda$ to the boxes of $\lambda+(1^m)$ outside the first column.
Since $(\lambda+(1^m))_i=\lambda_i+1$ and
$(\lambda+(1^m))'_{j+1}=\lambda'_j$,
\[
a_{\lambda+(1^m)}(i,j+1)=a_\lambda(i,j),
\qquad
l_{\lambda+(1^m)}(i,j+1)=l_\lambda(i,j).
\]
The uncancelled denominator factors are indexed by the boxes $(i,1)$ in the
first column of $\lambda+(1^m)$. For such a box, the arm and leg lengths are
$\lambda_i$ and $m-i$, respectively. Their product is
\[
\prod_{i=1}^{m}\left(1-q^{\lambda_i}t^{m-i+1}\right).
\]
The change in the exponent of $t$ is
\[
n\bigl(\lambda+(1^m)\bigr)-n(\lambda)
=\sum_{i=1}^{m}(i-1)=\frac{m(m-1)}2.
\]
Combining these factors proves \eqref{eq:column-shift}.
\end{proof}

Applying Lemma~\ref{lem:column-shift} to the conjugates in
Proposition~\ref{prop:conjugation} proves the vertical counterpart of
Theorem~\ref{thm:local-ratio}.

\begin{theorem}\label{thm:vertical-local-ratio}
Let $k,w,s,n\in\mathbb Z_{>0}$ and $g\in\{0,1,\ldots,s-1\}$. Suppose that
$n\ge wk$. The successive ratio for the conjugate vertical sequence is
\begin{equation}\label{eq:vertical-local-ratio}
\frac{P_{(\delta_{k,g+1}^{(w,s)})'}(\mathbf t_n;q,t)}
     {P_{(\delta_{k,g}^{(w,s)})'}(\mathbf t_n;q,t)}
=t^{\frac{wk(wk-1)}2}
\prod_{r=0}^{k-1}\prod_{j=1}^{w}
\frac{1-q^{sr+g}t^{n-wk+wr+j}}
     {1-q^{sr+g}t^{wr+j}}.
\end{equation}
\end{theorem}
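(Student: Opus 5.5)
We apply Lemma~\ref{lem:column-shift} with $\lambda=(\delta_{k,g}^{(w,s)})'$ and $m=wk$. First we check that this is legitimate. By Proposition~\ref{prop:conjugation}, the conjugate $(\delta_{k,g}^{(w,s)})'$ has at most $wk$ nonzero parts. Padded with zeros to length $wk$, its parts are
\[
\lambda_{wR+j}=s(k-1-R)+g,\qquad 0\le R\le k-1,\ 1\le j\le w.
\]
This is \eqref{eq:column-lengths}, read as rows of the conjugate. All $wk$ of these entries are nonnegative; some vanish exactly when $g=0$ (the last block). Also, $\delta_{k,g+1}^{(w,s)}$ is obtained from $\delta_{k,g}^{(w,s)}$ by adding one box in each of its first $wk$ columns. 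Conjugating, $(\delta_{k,g+1}^{(w,s)})'=\lambda+(1^{wk})$. The hypothesis $n\ge wk$ gives $n\ge m\ge\ell(\lambda)$, as the lemma requires. In the case $k=1$ we have $\lambda=(g^w)$ and the same description holds, so no separate treatment is needed.

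Equation~\eqref{eq:column-shift} then gives the prefactor $t^{wk(wk-1)/2}$ and the product
\[
\prod_{i=1}^{wk}\frac{1-q^{\lambda_i}t^{n-i+1}}{1-q^{\lambda_i}t^{wk-i+1}}.
\]
It remains to reindex this product to match \eqref{eq:vertical-local-ratio}. We write $i=wR+j$ with $0\le R\le k-1$ and $1\le j\le w$, so that $q^{\lambda_i}=q^{s(k-1-R)+g}$. Next we substitute $R=k-1-r$, which makes the $q$-exponent $sr+g$ and gives $i=w(k-1-r)+j$. Since $j$ runs over all of $1,\ldots,w$ for each fixed $r$, we may also replace $j$ by $w+1-j$ within each block. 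Then $i=wk-wr-j+1$, hence $n-i+1=n-wk+wr+j$ and $wk-i+1=wr+j$. These are exactly the exponents of $t$ in \eqref{eq:vertical-local-ratio}.

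The only delicate point is this double reindexing inside each block of $w$ equal rows, so that the $t$-exponents come out as $wr+j$ rather than $wr+w+1-j$. It is harmless because all rows in a block share the same $q$-exponent, so permuting $j$ within a block does not change the product. The zero rows when $g=0$ need no special care. Lemma~\ref{lem:column-shift} explicitly allows trailing zeros, and the corresponding factors $(1-t^{n-i+1})/(1-t^{wk-i+1})$ appear correctly as the $r=0$ terms with $q^0$.
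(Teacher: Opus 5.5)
Your proof is correct and follows the paper's argument. Both arguments identify $(\delta_{k,g+1}^{(w,s)})'=(\delta_{k,g}^{(w,s)})'+(1^{wk})$ via \eqref{eq:column-lengths}, apply Lemma~\ref{lem:column-shift} with $m=wk$, and reindex by $i=w(k-r)-j+1$, which is exactly your composite of $R=k-1-r$ and $j\mapsto w+1-j$; your remarks on $k=1$ and on the zero rows for $g=0$ are accurate and are covered in the paper by padding to $wk$ parts.
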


\begin{proof}
Regard $(\delta_{k,g}^{(w,s)})'$ as a partition with $wk$ parts, adjoining
trailing zeros if necessary. By Proposition~\ref{prop:conjugation}, passing
from $g$ to $g+1$ adds $1$ to every entry. Hence
\[
(\delta_{k,g+1}^{(w,s)})'=(\delta_{k,g}^{(w,s)})'+(1^{wk}).
\]
Lemma~\ref{lem:column-shift} applies with $m=wk$. For
$i=w(k-r)-j+1$, with $0\le r\le k-1$ and $1\le j\le w$, the $i$th entry is
$sr+g$. Therefore
\[
n-i+1=n-wk+wr+j,
\qquad
wk-i+1=wr+j.
\]
Substitution into \eqref{eq:column-shift} proves
\eqref{eq:vertical-local-ratio}.
\end{proof}

For $1\le g<s$, the comparison uses this quasistaircase representation with a
nonempty block of trailing zeros. The zero block in the raise path of Carr\'e,
Goncalves, and Luque is nonempty. With $r=k-j$ for the block index $j$ in
\cite{CGL} and with their index $i$ renamed as $j$, the divisibility factors in
\cite[Section~6.2]{CGL} are
\[
1-q^{sr+g+1}t^{wr+j}.
\]
These are exactly the denominator factors in \eqref{eq:vertical-local-ratio}
with $g$ replaced by $g+1$. In~\cite{CGL}, the Macdonald polynomials are nonsymmetric and $q,t$ satisfy a
specialization.
Equation~\eqref{eq:vertical-local-ratio} is a finite principal specialization
ratio for monic symmetric Macdonald polynomials with independent indeterminates
$q$ and $t$.

The conjugate vertical sequence has the corresponding iterated formula.

\begin{corollary}\label{cor:vertical-arbitrary-g}
Let $k,w,s,n\in\mathbb Z_{>0}$ and $g\in\{0,1,\ldots,s\}$. Suppose that
$n\ge wk$. The ratio along the conjugate vertical sequence is
\begin{equation}\label{eq:vertical-arbitrary-g}
\frac{P_{(\delta_{k,g}^{(w,s)})'}(\mathbf t_n;q,t)}
     {P_{(\delta_k^{(w,s)})'}(\mathbf t_n;q,t)}
=t^{\frac{gwk(wk-1)}2}
\prod_{r=0}^{k-1}\prod_{j=1}^{w}
\frac{(q^{sr}t^{n-wk+wr+j};q)_g}
     {(q^{sr}t^{wr+j};q)_g}.
\end{equation}
\end{corollary}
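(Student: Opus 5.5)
The plan is to telescope the successive ratios of Theorem~\ref{thm:vertical-local-ratio} over the steps $a=0,1,\ldots,g-1$, exactly as Corollary~\ref{cor:arbitrary-g} was obtained from Theorem~\ref{thm:local-ratio} through Corollary~\ref{cor:arbitrary-gh}.

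First, the case $g=0$ is trivial. By \eqref{eq:intermediate-endpoints}, both the numerator and denominator are indexed by $(\delta_k^{(w,s)})'$, and every $(\cdot;q)_0$ equals $1$. Both sides are therefore $1$.

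Second, for $g\ge1$, I check that Theorem~\ref{thm:vertical-local-ratio} applies at every step $a\in\{0,\ldots,g-1\}$. Its hypothesis $n\ge wk$ does not depend on $a$, and $a\le g-1\le s-1$ holds. The needed admissibility also holds: by Proposition~\ref{prop:conjugation}, each conjugate $(\delta_{k,a}^{(w,s)})'$ has at most $wk$ nonzero parts, so $n\ge wk$ is an admissible number of variables throughout.

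Third, I write the ratio in \eqref{eq:vertical-arbitrary-g} as the telescoping product
\[
\prod_{a=0}^{g-1}
\frac{P_{(\delta_{k,a+1}^{(w,s)})'}(\mathbf t_n;q,t)}
     {P_{(\delta_{k,a}^{(w,s)})'}(\mathbf t_n;q,t)}.
\]
The intermediate specializations are nonzero, since each factor in \eqref{eq:macdonald-evaluation} is a nonzero element of $\mathbb Q(q,t)$, so the cancellation is legitimate. I then substitute \eqref{eq:vertical-local-ratio} for each factor.

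The power of $t$ is the same at every step, so the product contributes $t^{gwk(wk-1)/2}$.

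For fixed $(r,j)$, the numerator factors are
\[
\prod_{a=0}^{g-1}\bigl(1-q^{sr+a}t^{n-wk+wr+j}\bigr)=(q^{sr}t^{n-wk+wr+j};q)_g,
\]
and the denominator factors similarly give $(q^{sr}t^{wr+j};q)_g$. Interchanging the finite products over $a$ and over $(r,j)$ yields \eqref{eq:vertical-arbitrary-g}.

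There is no real obstacle here. The only point needing care is confirming that the base $q$ shifted factorial is the right grouping: the exponent of $q$ moves with $a$ while the exponent of $t$ stays fixed, which is the reverse of the horizontal case, where $t$-shifted factorials appeared. I would verify this by the explicit index check in the third step.
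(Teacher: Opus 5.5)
Your proposal is correct and is the paper's proof. The paper also disposes of $g=0$ via \eqref{eq:intermediate-endpoints} and multiplies the successive ratios of Theorem~\ref{thm:vertical-local-ratio} over the steps $0,\ldots,g-1$. For each fixed $(r,j)$ it groups the factors with $q$-exponents $sr,\ldots,sr+g-1$ into the two $q$-shifted factorials, and collects the common monomial into $t^{gwk(wk-1)/2}$.
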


\begin{proof}
For $g=0$, both sides are $1$. For $g>0$, multiply the successive ratios in
Theorem~\ref{thm:vertical-local-ratio} for the steps from $0$ to $g$. For fixed
$(r,j)$, the powers of $q$ in the numerator and denominator factors are
$sr,sr+1,\ldots,sr+g-1$. These products are the two $q$-shifted factorials in
\eqref{eq:vertical-arbitrary-g}. The monomial factors contribute
$t^{\frac{gwk(wk-1)}{2}}$.
\end{proof}

\begin{corollary}\label{cor:vertical-boundary}
Let $k,w,s\in\mathbb Z_{>0}$. For $0\le g<s$,
$wk=\ell((\delta_{k,g+1}^{(w,s)})')$ is the smallest number of variables
admissible for both $(\delta_{k,g}^{(w,s)})'$ and
$(\delta_{k,g+1}^{(w,s)})'$, and
\[
\frac{P_{(\delta_{k,g+1}^{(w,s)})'}(\mathbf t_{wk};q,t)}
     {P_{(\delta_{k,g}^{(w,s)})'}(\mathbf t_{wk};q,t)}
=t^{\frac{wk(wk-1)}2}.
\]
For $0\le g\le s$,
\[
\frac{P_{(\delta_{k,g}^{(w,s)})'}(\mathbf t_{wk};q,t)}
     {P_{(\delta_k^{(w,s)})'}(\mathbf t_{wk};q,t)}
=t^{\frac{gwk(wk-1)}2}.
\]
\end{corollary}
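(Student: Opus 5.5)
The corollary follows from Theorem~\ref{thm:vertical-local-ratio} and Corollary~\ref{cor:vertical-arbitrary-g} by specializing to $n=wk$. Before that, I will settle the admissibility claim.

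\textbf{Admissibility.} By Proposition~\ref{prop:conjugation}, for $k\ge2$ the conjugate $(\delta_{k,g}^{(w,s)})'$ consists of the parts $(s(k-1-r)+g)^w$ for $0\le r\le k-1$, with zero parts omitted. For $k=1$ it is $(g^w)$.

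\begin{itemize}
\item For $g+1\ge1$, every one of the $wk$ listed parts is positive. Hence $\ell\bigl((\delta_{k,g+1}^{(w,s)})'\bigr)=wk$.
\item For $(\delta_{k,g}^{(w,s)})'$, the length is $wk$ when $g\ge1$. When $g=0$, the last block $g^w$ consists of zeros, so the length is $w(k-1)\le wk$.
\end{itemize}

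So $wk$ is the maximum of the two lengths, which makes it the smallest admissible number of variables for both partitions. Equivalently, $\ell(\lambda')=\lambda_1=wk$ by \eqref{eq:column-lengths}.

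\textbf{Successive ratio.} I will set $n=wk$ in \eqref{eq:vertical-local-ratio}; the hypothesis $n\ge wk$ of Theorem~\ref{thm:vertical-local-ratio} holds. For each $(r,j)$, the numerator factor becomes $1-q^{sr+g}t^{wr+j}$, which is identical to the denominator factor.

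I still need to check that each denominator factor is nonzero in the field of rational functions. Since $wr+j\ge1$, the factor is a nonconstant binomial, so it does not vanish even when $sr+g=0$. Each pair therefore cancels, and only $t^{wk(wk-1)/2}$ remains.

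\textbf{Cumulative ratio.} I will set $n=wk$ in \eqref{eq:vertical-arbitrary-g}. The numerator $q$-shifted factorial $(q^{sr}t^{wr+j};q)_g$ then coincides with the denominator one, and the denominator is nonzero because $t^{wr+j}$ appears with $wr+j\ge1$. This leaves $t^{gwk(wk-1)/2}$.

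As a consistency check, the same result comes from telescoping $g$ successive ratios, each equal to $t^{wk(wk-1)/2}$. The case $g=0$ is trivial.

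\textbf{Main obstacle.} The only delicate point is the admissibility statement when $g=0$. The cancellation is only a literal substitution, but the length of $(\delta_{k,0}^{(w,s)})'$ drops to $w(k-1)$. The case $k=1$, where this conjugate is $\varnothing$, must also be included. This is why the minimum is governed by $(\delta_{k,g+1}^{(w,s)})'$.
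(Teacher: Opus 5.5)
Your proposal is correct and follows essentially the same route as the paper: admissibility comes from $\ell\bigl((\delta_{k,g+1}^{(w,s)})'\bigr)=wk$ together with the length of the other conjugate being at most $wk$ (the paper phrases this via the largest part $wk$ and containment), and setting $n=wk$ in \eqref{eq:vertical-local-ratio} makes each numerator factor equal its denominator factor. For the cumulative identity the paper multiplies the first $g$ successive ratios, which is the same as your specialization of Corollary~\ref{cor:vertical-arbitrary-g} at $n=wk$.
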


\begin{proof}
For $0\le g<s$, the largest part of $\delta_{k,g+1}^{(w,s)}$ is $wk$. Hence
$\ell((\delta_{k,g+1}^{(w,s)})')=wk$. The denominator partition is contained
in the numerator partition, and $wk$ is the smallest number of variables
admissible for both. Set $n=wk$ in \eqref{eq:vertical-local-ratio}. Each
numerator factor equals the corresponding denominator factor. This proves the
first identity. Multiplication of the first $g$ successive ratios proves the
second.
\end{proof}

These identities also follow from the standard identity in
Macdonald~\cite[Chapter~VI, Equation~(4.17)]{Macdonald},
$P_{\lambda+(1^m)}(x_1,\ldots,x_m;q,t)=x_1\cdots x_mP_\lambda(x_1,\ldots,x_m;q,t)$,
specialized at $\mathbf t_{wk}$.

\begin{theorem}\label{thm:vertical-nonnegative}
Let $k,w,s,n\in\mathbb Z_{>0}$ and $0\le g<h\le s$. Suppose that $n\ge wk$.
The ratio along the conjugate vertical sequence is
\begin{equation}\label{eq:vertical-arbitrary-gh}
\frac{P_{(\delta_{k,h}^{(w,s)})'}(\mathbf t_n;q,t)}
     {P_{(\delta_{k,g}^{(w,s)})'}(\mathbf t_n;q,t)}
=t^{\frac{(h-g)wk(wk-1)}2}
\prod_{r=0}^{k-1}\prod_{j=1}^{w}
\frac{(q^{sr+g}t^{n-wk+wr+j};q)_{h-g}}
     {(q^{sr+g}t^{wr+j};q)_{h-g}}.
\end{equation}
If $(k,g,h)\ne(1,0,1)$, this ratio is coefficientwise nonnegative in
$\mathbb Z[[q,t]]$ if and only if $n=wk$. In that case it equals
\[
t^{\frac{(h-g)wk(wk-1)}2}.
\]
For $(k,g,h)=(1,0,1)$, the ratio is
\[
t^{\frac{w(w-1)}2}\begin{bmatrix}n\\w\end{bmatrix}_t,
\]
and is coefficientwise nonnegative for every $n\ge w$.
\end{theorem}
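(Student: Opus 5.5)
The displayed formula \eqref{eq:vertical-arbitrary-gh} comes from telescoping. I will multiply the successive ratios of Theorem~\ref{thm:vertical-local-ratio} for the steps $g,g+1,\ldots,h-1$, exactly as in the proof of Corollary~\ref{cor:vertical-arbitrary-g}. The hypothesis $n\ge wk$ covers every step. The monomials contribute $t^{(h-g)wk(wk-1)/2}$. For fixed $(r,j)$, the $q$-exponents $sr+g,sr+g+1,\ldots,sr+h-1$ assemble the numerator and denominator factors into $(q^{sr+g}t^{n-wk+wr+j};q)_{h-g}$ and $(q^{sr+g}t^{wr+j};q)_{h-g}$.

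For the exceptional case $(k,g,h)=(1,0,1)$, only $r=0$ and the single factor with $q^0$ survive. The ratio becomes
\[
t^{\frac{w(w-1)}2}\prod_{j=1}^{w}\frac{1-t^{n-w+j}}{1-t^{j}}
=t^{\frac{w(w-1)}2}\begin{bmatrix}n\\w\end{bmatrix}_t .
\]
This is a polynomial in $t$ with nonnegative coefficients, the rank generating polynomial of partitions in a $w\times(n-w)$ rectangle.

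In the general case, sufficiency is immediate: at $n=wk$ each numerator factor equals the corresponding denominator factor, as in Corollary~\ref{cor:vertical-boundary}. For necessity, I assume nonnegativity and apply Lemma~\ref{lem:general-nonnegativity}; the ratio lies in $\mathbb Z[[q,t]]$ because every denominator factor has $t$-exponent $wr+j\ge1$. The lemma says the product with the monomial removed is independent of $q$. Let $m$ be the smallest positive $q$-exponent occurring among the factors, and compute the coefficient of $q^m$. The factors whose $q$-exponent is $0$ occur only when $g=0$ and $r=0$. These factors contribute a constant multiplier $C=\prod_{j}(1-t^{n-wk+j})/(1-t^j)$, which is a nonzero element of $\mathbb Z[[t]]$. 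Every other factor equals $1$ modulo $q$. Hence the coefficient of $q^m$ is
\[
C\sum_{(r,j)}\left(t^{wr+j}-t^{n-wk+wr+j}\right)
=C\,(1-t^{n-wk})\sum_{(r,j)}t^{wr+j},
\]
where the sum runs over the pairs $(r,j)$ having a factor with $q$-exponent exactly $m$.

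To apply this, I must check that such pairs exist, which is where the case split enters. If $g\ge1$, take $m=g$ with $r=0$. If $g=0$ and $h\ge2$, take $m=1$ with $r=0$; pairs with $r=1$ also contribute when $s=1$, but they carry the same sign pattern. If $g=0$, $h=1$ and $k\ge2$, take $m=s$ with $r=1$. The remaining case $g=0$, $h=1$, $k=1$ is exactly the excluded triple. In each of the three cases the sum $\sum t^{wr+j}$ is a nonzero polynomial, so independence of $q$ forces $1-t^{n-wk}=0$, that is $n=wk$.

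The main obstacle is the bookkeeping in this lowest-order coefficient. I must make sure no cross terms between different factors reach degree $m$. This holds because any product of two positive-degree terms has $q$-degree at least $2m$, and the $q^0$ parts are exactly the constant $C$. I also must not miss contributions from $r\ge1$ when $s$ is small. I would handle both points by writing each factor modulo $q^{m+1}$ before multiplying.
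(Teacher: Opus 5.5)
Your proposal is correct and follows essentially the same route as the paper. You telescope the local vertical ratios, use cancellation at $n=wk$, and evaluate the Gaussian case $(k,g,h)=(1,0,1)$. For necessity you combine Lemma~\ref{lem:general-nonnegativity} with the lowest positive $q$-degree coefficient, which is $q^g$, $q$, or $q^s$ in the same three cases as the paper.

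Two cosmetic points. In the case $g=0$, $h\ge2$, your caveat about $r=1$ also contributing when $s=1$ is vacuous, because $h\le s$ forces $s\ge2$, as the paper notes. When $g\ge1$, your constant $C$ is simply $1$.
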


\begin{proof}
Multiplying \eqref{eq:vertical-local-ratio} over the steps from $g$ to $h$
gives \eqref{eq:vertical-arbitrary-gh}. If $n=wk$, every numerator factor in
this formula equals the corresponding denominator factor.

For $(k,g,h)=(1,0,1)$, equation~\eqref{eq:vertical-arbitrary-gh} reduces to
\[
t^{\frac{w(w-1)}2}\prod_{j=1}^{w}
\frac{1-t^{n-w+j}}{1-t^j}
=t^{\frac{w(w-1)}2}\begin{bmatrix}n\\w\end{bmatrix}_t.
\]

Assume $(k,g,h)\ne(1,0,1)$ and $n>wk$. Lemma~\ref{lem:general-nonnegativity} shows that coefficientwise nonnegativity forces the product in \eqref{eq:vertical-arbitrary-gh}, after removal of its displayed power of $t$, to be independent of $q$. The first positive-degree coefficient is
\[
\begin{cases}
(1-t^{n-wk})\displaystyle\sum_{j=1}^{w}t^j,
& g>0 \quad (\text{coefficient of }q^g),\\[6pt]
\displaystyle\prod_{j=1}^{w}\frac{1-t^{n-wk+j}}{1-t^j}
(1-t^{n-wk})\displaystyle\sum_{j=1}^{w}t^j,
& g=0,\ h\ge2 \quad (\text{coefficient of }q),\\[8pt]
\displaystyle\prod_{j=1}^{w}\frac{1-t^{n-wk+j}}{1-t^j}
(1-t^{n-wk})\displaystyle\sum_{j=1}^{w}t^{w+j},
& g=0,\ h=1,\ k\ge2 \quad (\text{coefficient of }q^s).
\end{cases}
\]
In the two cases with $g=0$, the prefactor is contributed by the factors independent of $q$. If $g=0$ and $h\ge2$, the inequality $h\le s$ gives $s\ge2$, and degree one occurs exactly once in the $r=0$ shifted factorial. If $g=0$, $h=1$, and $k\ge2$, the next positive power is $q^s$, contributed by $r=1$. Each displayed coefficient is nonzero for $n>wk$, contrary to the $q$-independence required by Lemma~\ref{lem:general-nonnegativity}.
\end{proof}

For $g\ge1$, Equation~\eqref{eq:vertical-arbitrary-g} is the case of \eqref{eq:vertical-arbitrary-gh} with lower index $0$ and upper index $g$. At $g=0$, \eqref{eq:vertical-arbitrary-g} follows from \eqref{eq:intermediate-endpoints}. Corollary~\ref{cor:vertical-boundary} follows by specializing \eqref{eq:vertical-arbitrary-gh} at $n=wk$. Its second identity at $g=0$ follows from \eqref{eq:intermediate-endpoints}.

\section{Rectangular complementation and the number of variables}

\subsection{Principal specializations under rectangular complementation}

Luque~\cite[Corollary~3.3]{LuqueSubrectangular} establishes a complement relation
in finitely many variables through inversion of the alphabet. Rectangular
complementation also occurs in filtrations used to construct Jack
functions~\cite[Definitions~4.3 and~4.14, Theorem~4.21]{CaiJing} and Macdonald
symmetric functions~\cite[Definition~4.7, Theorem~4.8]{CaiRect}. For
$m,n\in\mathbb Z_{>0}$ and $\lambda\subseteq(m^n)$, rewriting Luque's relation
in the monic $P$-normalization and taking the principal specialization yields
\begin{equation}\label{eq:principal-complement}
P_{(m-\lambda_n,\ldots,m-\lambda_1)}(\mathbf t_n;q,t)
=t^{\frac{mn(n-1)}2-(n-1)\lvert\lambda\rvert}
P_\lambda(\mathbf t_n;q,t).
\end{equation}
The same identity follows directly from \eqref{eq:macdonald-evaluation}.
Fix $i$ and $0\le p<\lambda_i$. The factors satisfy
\[
\prod_{j=\lambda'_{\lambda_i-p}+1}^{n}
\frac{1-q^p t^{j-i+1}}{1-q^p t^{j-i}}
=\frac{1-q^p t^{n-i+1}}{1-q^p t^{\lambda'_{\lambda_i-p}-i+1}}.
\]
The numerator on the right is the numerator factor in
\eqref{eq:macdonald-evaluation} for the box $(i,p+1)$, and the denominator is the
corresponding denominator factor for the box $(i,\lambda_i-p)$. As $p$ ranges
from $0$ to $\lambda_i-1$, these numerators exhaust the numerator factors in row $i$.
For fixed $i<j$, the values of $p$ on the left are
$0,1,\ldots,\lambda_i-\lambda_j-1$. Multiplication over $i$ and $p$ gives
\[
P_\lambda(\mathbf t_n;q,t)
=t^{n(\lambda)}
\prod_{1\le i<j\le n}
\frac{(t^{j-i+1};q)_{\lambda_i-\lambda_j}}
     {(t^{j-i};q)_{\lambda_i-\lambda_j}}.
\]
After adjoining trailing zeros to $\lambda$, the change of indices
$(i,j)\mapsto(n+1-j,n+1-i)$ leaves the product unchanged under the rectangular
complement. Moreover,
\[
n(m-\lambda_n,\ldots,m-\lambda_1)-n(\lambda)
=\frac{mn(n-1)}2-(n-1)\lvert\lambda\rvert,
\]
which recovers \eqref{eq:principal-complement}.

\begin{proposition}\label{prop:principal-complement-intermediate}
Let $k,w,s,n\in\mathbb Z_{>0}$ and $g\in\{0,1,\ldots,s\}$. Suppose that
\[
0\le n-s(k-1)-g\le s.
\]
The principal specializations satisfy
\begin{equation}\label{eq:principal-complement-intermediate}
P_{\delta_{k,n-s(k-1)-g}^{(w,s)}}(\mathbf t_n;q,t)
=t^{\frac{wk(n-1)(n-s(k-1)-2g)}2}
P_{\delta_{k,g}^{(w,s)}}(\mathbf t_n;q,t).
\end{equation}
\end{proposition}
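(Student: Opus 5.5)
The plan is to combine Proposition~\ref{prop:intermediate-complement} with the general complement identity \eqref{eq:principal-complement}. Set $\lambda=\delta_{k,g}^{(w,s)}$, $m=wk$ and $h=n-s(k-1)-g$. The hypothesis gives $h\in\{0,1,\ldots,s\}$ and $n\ge s(k-1)+g=\ell(\lambda)$, so $\lambda\subseteq((wk)^n)$. Proposition~\ref{prop:intermediate-complement} identifies the rectangular complement of $\lambda$ in $((wk)^n)$ with $\delta_{k,h}^{(w,s)}$. The partition $\delta_{k,h}^{(w,s)}$ has length $s(k-1)+h\le n$, so $n$ is admissible for both partitions. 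Substituting into \eqref{eq:principal-complement} then gives
\[
P_{\delta_{k,h}^{(w,s)}}(\mathbf t_n;q,t)
=t^{\frac{wkn(n-1)}2-(n-1)\lvert\delta_{k,g}^{(w,s)}\rvert}
P_{\delta_{k,g}^{(w,s)}}(\mathbf t_n;q,t).
\]

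The remaining step is to check the exponent. By \eqref{eq:intermediate-size-length},
\[
\lvert\delta_{k,g}^{(w,s)}\rvert=gwk+\frac{ws\,k(k-1)}2=\frac{wk\{2g+s(k-1)\}}2 .
\]
Hence the exponent equals
\[
\frac{wk(n-1)}2\bigl(n-s(k-1)-2g\bigr),
\]
which is the exponent in \eqref{eq:principal-complement-intermediate}.

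As a cross-check, one could use Corollary~\ref{cor:arbitrary-gh}. When $h>g$, condition~(3) of Theorem~\ref{thm:nonnegative-complement} holds, and \eqref{eq:nonnegative-power} gives the exponent $\frac{wk(h-g)(n-1)}2$. Here $h-g=n-s(k-1)-2g$, so the two exponents agree. The case $h<g$ follows by symmetry, and $h=g$ is trivial. This route requires a case split, so the main argument should stay with \eqref{eq:principal-complement}.

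The only real obstacle is bookkeeping. One must confirm that \eqref{eq:principal-complement} applies with exactly $n$ rows, trailing zeros included, which holds because $\ell(\lambda)\le n$. One must also confirm that the boundary cases $g=0$, $h=0$ and $k=1$ are covered. The case $k=1$ is already handled inside Proposition~\ref{prop:intermediate-complement}.
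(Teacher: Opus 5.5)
Your proposal is correct and matches the paper's proof: take $m=wk$ and $\lambda=\delta_{k,g}^{(w,s)}$ in \eqref{eq:principal-complement}, identify the complement with $\delta_{k,n-s(k-1)-g}^{(w,s)}$ via Proposition~\ref{prop:intermediate-complement}, and simplify the exponent using \eqref{eq:intermediate-size-length}. Your optional cross-check through Theorem~\ref{thm:nonnegative-complement} formally requires $wk\ge2$, but you can drop that hypothesis by reading the exponent directly off \eqref{eq:arbitrary-gh} at $n=s(k-1)+g+h$.
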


\begin{proof}
In \eqref{eq:principal-complement}, take $m=wk$ and
$\lambda=\delta_{k,g}^{(w,s)}$. Proposition~\ref{prop:intermediate-complement}
identifies the rectangular complement with
$\delta_{k,n-s(k-1)-g}^{(w,s)}$. The size formula in
\eqref{eq:intermediate-size-length} reduces the exponent to
\[
\frac{wk\,n(n-1)}2
-(n-1)\left(gwk+\frac{ws\,k(k-1)}2\right)
=\frac{wk(n-1)(n-s(k-1)-2g)}2.
\]
Thus \eqref{eq:principal-complement} becomes
\eqref{eq:principal-complement-intermediate}.
\end{proof}

For $n=s(k-1)+g+h$, Proposition~\ref{prop:principal-complement-intermediate}
gives the monomial in \eqref{eq:nonnegative-power}.
At $n=sk$, rectangular complementation acts on the horizontal sequence by
$g\leftrightarrow s-g$.

\begin{corollary}\label{cor:reflection}
Let $k,w,s\in\mathbb Z_{>0}$ and $g\in\{0,1,\ldots,s\}$. The principal
specializations satisfy
\begin{equation*}
P_{\delta_{k,s-g}^{(w,s)}}(\mathbf t_{sk};q,t)
=t^{\frac{wk(sk-1)(s-2g)}2}
P_{\delta_{k,g}^{(w,s)}}(\mathbf t_{sk};q,t).
\end{equation*}
\end{corollary}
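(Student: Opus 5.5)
This is the special case $n=sk$ of Proposition~\ref{prop:principal-complement-intermediate}, so the plan is to specialize that identity and simplify the exponent.

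With $n=sk$ the complementary index is
\[
n-s(k-1)-g=sk-s(k-1)-g=s-g.
\]
This lies in $\{0,1,\ldots,s\}$ for every $g\in\{0,1,\ldots,s\}$, so the hypothesis $0\le n-s(k-1)-g\le s$ holds automatically. Corollary~\ref{cor:complement-sk} identifies $\delta_{k,s-g}^{(w,s)}$ as the rectangular complement of $\delta_{k,g}^{(w,s)}$ in $((wk)^{sk})$. The length condition $sk\ge\ell(\delta_{k,g}^{(w,s)})=s(k-1)+g$ also holds, so $n=sk$ is admissible and \eqref{eq:macdonald-evaluation} applies to both sides.

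Substituting $n=sk$ into the exponent of \eqref{eq:principal-complement-intermediate} gives
\[
\frac{wk(n-1)(n-s(k-1)-2g)}2=\frac{wk(sk-1)(s-2g)}2.
\]
This is exactly the stated exponent, and the corollary follows.

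There is no real obstacle. The only point to check is that the exponent may be negative when $2g>s$. The identity is then an equality of rational functions in $q,t$, consistent with applying the same statement to $s-g$ in place of $g$. As an alternative check, one could use \eqref{eq:arbitrary-gh} with $n=sk$ and the pair $(g,s-g)$, where the shifted factorials cancel as in Theorem~\ref{thm:nonnegative-complement}. However, the direct specialization above is the intended route.
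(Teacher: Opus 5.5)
Your proposal is correct and matches the paper, which obtains this corollary as the case $n=sk$ of Proposition~\ref{prop:principal-complement-intermediate}, with complementary index $s-g$. Your verification of the hypothesis $0\le s-g\le s$ and your reduction of the exponent to $\frac{wk(sk-1)(s-2g)}{2}$ are exactly what is needed.
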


The minimal admissible number of variables for a nonempty intermediate
partition gives the boundary value for the recurrence in the number of variables.

\begin{corollary}\label{cor:boundary}
Let $k,w,s\in\mathbb Z_{>0}$ and $g\in\{0,1,\ldots,s\}$. Set
$n=\ell(\delta_{k,g}^{(w,s)})$. If $n>0$, the ratio is
\begin{equation*}
\frac{P_{\delta_{k,g}^{(w,s)}}(\mathbf t_n;q,t)}
     {P_{\delta_k^{(w,s)}}(\mathbf t_n;q,t)}
=t^{\frac{wkg(g-1)}2+\frac{wgs\,k(k-1)}2}.
\end{equation*}
\end{corollary}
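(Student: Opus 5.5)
The plan is to specialize Corollary~\ref{cor:arbitrary-g} at the smallest admissible number of variables and watch the shifted factorials cancel. Set $n=\ell(\delta_{k,g}^{(w,s)})=s(k-1)+g$ by \eqref{eq:intermediate-size-length}. Since $n>0$ and $n\ge\ell(\delta_{k,g}^{(w,s)})$, the hypothesis of Corollary~\ref{cor:arbitrary-g} holds, so \eqref{eq:arbitrary-g} applies with this $n$. The case $g=0$ is immediate, because both sides equal $1$ by \eqref{eq:intermediate-endpoints}. We may therefore assume $g\ge1$.

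With $n=s(k-1)+g$, the numerator exponent in \eqref{eq:arbitrary-g} becomes
\[
n-s(k-1)-g+1+sr=1+sr.
\]
Hence for each $(r,j)$ the numerator factorial is
\[
(q^{j-1+wr}t^{1+sr};t)_g,
\]
which is identical to the denominator factorial. The double product is therefore $1$, and only the monomial
\[
t^{\frac{wkg(g-1)}2+\frac{wgs\,k(k-1)}2}
\]
survives, which is the claimed formula.

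As a cross-check, one can instead apply Proposition~\ref{prop:principal-complement-intermediate} or Theorem~\ref{thm:nonnegative-complement} with lower index $0$ and upper index $g$. There $n=s(k-1)+0+g$ is exactly the complement condition~(3), and \eqref{eq:nonnegative-power} gives the exponent $wkg(n-1)/2$. Substituting $n-1=s(k-1)+g-1$ gives
\[
\frac{wkg(n-1)}2=\frac{wkg(g-1)}2+\frac{wgs\,k(k-1)}2,
\]
which agrees. However, Theorem~\ref{thm:nonnegative-complement} assumes $wk\ge2$, so the direct substitution into \eqref{eq:arbitrary-g} is the primary route, since it covers all cases uniformly.

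No real obstacle is expected. The only points to check are that $n>0$ guarantees the admissibility needed to invoke Corollary~\ref{cor:arbitrary-g}, and that the $g=0$ case is handled separately.
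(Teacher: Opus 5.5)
Your proof is correct, but your primary route differs from the paper's. The paper does not substitute into \eqref{eq:arbitrary-g}. It applies Proposition~\ref{prop:principal-complement-intermediate} at $n=s(k-1)+g$, where the complementary index $n-s(k-1)-g$ is $0$, so \eqref{eq:principal-complement-intermediate} reads $P_{\delta_k^{(w,s)}}(\mathbf t_n;q,t)=t^{-wkg(n-1)/2}P_{\delta_{k,g}^{(w,s)}}(\mathbf t_n;q,t)$. It then rewrites $wkg(n-1)/2$ exactly as in your cross-check.

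Your argument instead uses the explicit product coming from the horizontal ratios: at $n=s(k-1)+g$, each numerator $t$-shifted factorial in \eqref{eq:arbitrary-g} coincides with the corresponding denominator one. This is the same cancellation as the step (3)$\Rightarrow$(2) in the proof of Theorem~\ref{thm:nonnegative-complement}, and that step does not in fact use $wk\ge2$. Your route relies only on the box-by-box cancellation of Section~4 (Lemma~\ref{lem:cancellation} and telescoping), not on the complement identity \eqref{eq:principal-complement}.

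The paper's route instead explains why the ratio is a monomial. At the minimal admissible $n$, the rectangular complement of $\delta_{k,g}^{(w,s)}$ in $((wk)^n)$ is $\delta_k^{(w,s)}$ itself. This fits Section~5, where the corollary serves as the boundary value for the recurrence in Proposition~\ref{prop:n-recurrence}.

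Two small remarks. First, the separate $g=0$ case is unnecessary, since $(\,\cdot\,;t)_0=1$ means \eqref{eq:arbitrary-g} already covers it. Second, your caveat about $wk\ge2$ applies to Theorem~\ref{thm:nonnegative-complement} as stated, but not to Proposition~\ref{prop:principal-complement-intermediate}, which has no such hypothesis. So your cross-check via that proposition, which is the paper's proof, is valid in all cases.
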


\begin{proof}
The length formula in \eqref{eq:intermediate-size-length} reads
$n=s(k-1)+g$. The complementary index in
\eqref{eq:principal-complement-intermediate} is $0$, and
\eqref{eq:intermediate-endpoints} identifies $\delta_{k,0}^{(w,s)}$ with
$\delta_k^{(w,s)}$. Hence
\[
P_{\delta_k^{(w,s)}}(\mathbf t_n;q,t)
=t^{-\frac{wkg(n-1)}2}
P_{\delta_{k,g}^{(w,s)}}(\mathbf t_n;q,t).
\]
Since $n-1=s(k-1)+g-1$,
\[
\frac{wkg(n-1)}2
=\frac{wkg(g-1)}2+\frac{wgs\,k(k-1)}2.
\]
This is the exponent in the stated ratio.
\end{proof}

For $(k,g)=(1,0)$, both partitions in the boundary ratio are empty, and the
ratio equals $1$ for every positive number of variables.

\subsection{Recurrence in the number of variables}

\begin{proposition}\label{prop:n-recurrence}
Let $k,w,s,n\in\mathbb Z_{>0}$ and $0\le g<h\le s$. Suppose that
$n\ge\ell(\delta_{k,h}^{(w,s)})$. The ratios satisfy
\begin{equation}\label{eq:n-recurrence}
\frac{P_{\delta_{k,h}^{(w,s)}}(\mathbf t_{n+1};q,t)}
     {P_{\delta_{k,g}^{(w,s)}}(\mathbf t_{n+1};q,t)}
=
\frac{P_{\delta_{k,h}^{(w,s)}}(\mathbf t_n;q,t)}
     {P_{\delta_{k,g}^{(w,s)}}(\mathbf t_n;q,t)}
\prod_{r=0}^{k-1}\prod_{j=1}^{w}
\frac{1-q^{j-1+wr}t^{n-s(k-1)-g+sr+1}}
     {1-q^{j-1+wr}t^{n-s(k-1)-h+sr+1}}.
\end{equation}
\end{proposition}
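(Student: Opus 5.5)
The plan is to apply Corollary~\ref{cor:arbitrary-gh} twice, once with $n$ variables and once with $n+1$ variables, and divide the two expressions. Both applications are legitimate. The hypothesis $n\ge\ell(\delta_{k,h}^{(w,s)})$ gives $n+1\ge\ell(\delta_{k,h}^{(w,s)})$ as well, so \eqref{eq:arbitrary-gh} holds at both $n$ and $n+1$.

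The monomial prefactor $t^{\frac{wk(h-g)\{s(k-1)+g+h-1\}}2}$ does not depend on $n$. The denominator shifted factorials $(q^{j-1+wr}t^{g+1+sr};t)_{h-g}$ also do not depend on $n$. Therefore the quotient of the $(n+1)$-variable ratio by the $n$-variable ratio is
\[
\prod_{r=0}^{k-1}\prod_{j=1}^{w}
\frac{(q^{j-1+wr}t^{n-s(k-1)-h+2+sr};t)_{h-g}}
     {(q^{j-1+wr}t^{n-s(k-1)-h+1+sr};t)_{h-g}}.
\]

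For fixed $(r,j)$, write $x=q^{j-1+wr}t^{n-s(k-1)-h+1+sr}$. The factor then becomes $(xt;t)_{h-g}/(x;t)_{h-g}$. This quotient telescopes to $(1-xt^{h-g})/(1-x)$, since the two products share the factors $1-xt^{i}$ for $1\le i\le h-g-1$.

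It remains to identify the surviving factors. We have $xt^{h-g}=q^{j-1+wr}t^{n-s(k-1)-g+sr+1}$, which is the numerator factor in \eqref{eq:n-recurrence}. The factor $1-x$ is exactly the denominator factor there. This proves the recurrence.

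There is no real obstacle. The only points to check are the index bookkeeping in the telescoping step and the confirmation that the admissibility hypothesis carries over from $n$ to $n+1$.
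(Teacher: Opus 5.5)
Your proof is correct: both applications of Corollary~\ref{cor:arbitrary-gh} are admissible. The monomial prefactor and the denominator shifted factorials in \eqref{eq:arbitrary-gh} do not involve $n$, and $(xt;t)_{h-g}/(x;t)_{h-g}=(1-xt^{h-g})/(1-x)$ yields exactly the factors in \eqref{eq:n-recurrence}.

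The paper reaches the same result from a different starting point. It does not use the closed form \eqref{eq:arbitrary-gh}. Instead it derives from \eqref{eq:macdonald-evaluation} the single-partition recurrence \eqref{eq:lambda-n-recurrence}, valid for every $\lambda$ with $\ell(\lambda)\le n$. It then divides the instances for $\delta_{k,h}^{(w,s)}$ and $\delta_{k,g}^{(w,s)}$, so the common boxes cancel. Finally it telescopes over the consecutive rows $s(k-1-r)+g+1,\ldots,s(k-1-r)+h$ that the skew diagram occupies in column $wr+j$. The underlying telescoping is the same as yours, because your $(x;t)_{h-g}$ is precisely the product of the numerator factors of \eqref{eq:macdonald-evaluation} over those skew boxes.

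Your route is shorter, but it depends on Corollary~\ref{cor:arbitrary-gh}, and hence on Lemma~\ref{lem:cancellation} and Theorem~\ref{thm:local-ratio}. The paper's route is independent of the horizontal ratio formula. It also extends verbatim to any nested pair $\mu\subseteq\lambda$ with $\ell(\lambda)\le n$, since every column of a skew diagram consists of consecutive rows. The resulting factor is $\prod_{c\ge1}(1-q^{c-1}t^{n-\mu'_c+1})/(1-q^{c-1}t^{n-\lambda'_c+1})$.
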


\begin{proof}
For a partition $\lambda$ with $\ell(\lambda)\le n$,
\eqref{eq:macdonald-evaluation} implies
\begin{equation}\label{eq:lambda-n-recurrence}
\frac{P_\lambda(\mathbf t_{n+1};q,t)}{P_\lambda(\mathbf t_n;q,t)}
=
\prod_{(i,j)\in\lambda}
\frac{1-q^{j-1}t^{n-i+2}}{1-q^{j-1}t^{n-i+1}}.
\end{equation}
Apply \eqref{eq:lambda-n-recurrence} to $\delta_{k,h}^{(w,s)}$ and
$\delta_{k,g}^{(w,s)}$ and divide. The factors indexed by common boxes cancel.
In column $wr+j$, the boxes of
$\delta_{k,h}^{(w,s)}/\delta_{k,g}^{(w,s)}$ occupy the consecutive rows
\[
s(k-1-r)+g+1,\ldots,s(k-1-r)+h.
\]
Their contribution telescopes to
\[
\frac{1-q^{j-1+wr}t^{n-s(k-1)+sr-g+1}}
     {1-q^{j-1+wr}t^{n-s(k-1)+sr-h+1}}.
\]
Multiplication over $(r,j)$ proves \eqref{eq:n-recurrence}.
\end{proof}

\begin{example}\label{ex:n-recurrence}
Take $(k,w,s,g,h,n)=(3,1,3,0,2,8)$. The two boxes of the skew diagram in each
of the three columns occupy consecutive rows. Their contributions in
\eqref{eq:lambda-n-recurrence} telescope to
\[
\frac{1-t^3}{1-t},\qquad
\frac{1-qt^6}{1-qt^4},\qquad
\frac{1-q^2t^9}{1-q^2t^7}.
\]
Corollary~\ref{cor:boundary} states
\[
\frac{P_{\delta_{3,2}^{(1,3)}}(\mathbf t_8;q,t)}
     {P_{\delta_3^{(1,3)}}(\mathbf t_8;q,t)}=t^{21}.
\]
Proposition~\ref{prop:n-recurrence} yields
\[
\frac{P_{\delta_{3,2}^{(1,3)}}(\mathbf t_9;q,t)}
     {P_{\delta_3^{(1,3)}}(\mathbf t_9;q,t)}
=t^{21}
\frac{1-t^3}{1-t}
\frac{1-qt^6}{1-qt^4}
\frac{1-q^2t^9}{1-q^2t^7}.
\]
\end{example}

\section{Endpoint products and centered symmetries}\label{sec:endpoint-products}

By the length formula in \eqref{eq:stair-size-length}, the endpoint partition
$\delta_{k+1}^{(w,s)}$ has length $sk$. Formula~\eqref{eq:macdonald-evaluation}
for this partition is organized by the rectangular blocks indexed by the boxes
of an ordinary staircase. The same product follows by iterating
Corollary~\ref{cor:consecutive}. A second representation uses a centered product
satisfying exchange, reciprocity, and inversion identities.

\subsection{Products indexed by a triangular set}

\begin{corollary}\label{cor:complete-product}
Let $k,w,s,n\in\mathbb Z_{>0}$ with $n\ge sk$. The principal specialization is
\begin{equation}\label{eq:complete-triangular-product}
P_{\delta_{k+1}^{(w,s)}}(\mathbf t_n;q,t)
=\prod_{h=1}^{k} t^{\frac{ws}{2}h(sh-1)}
\prod_{\substack{r_1,r_2\ge0\\ r_1+r_2\le k-1}}
\prod_{i=1}^{s}
\frac{(q^{wr_1}t^{n-i+1-sr_2};q)_w}
     {(q^{wr_1}t^{s-i+1+sr_1};q)_w}.
\end{equation}
\end{corollary}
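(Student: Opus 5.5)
The plan is to telescope Corollary~\ref{cor:consecutive} over the consecutive generalized staircases $\delta_1^{(w,s)}=\varnothing,\delta_2^{(w,s)},\ldots,\delta_{k+1}^{(w,s)}$, and then re-index the resulting factors by the triangular set $\{(r_1,r_2):r_1,r_2\ge0,\ r_1+r_2\le k-1\}$.

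\emph{Step 1: telescoping.} Since $P_\varnothing=1$, we have
\[
P_{\delta_{k+1}^{(w,s)}}(\mathbf t_n;q,t)=\prod_{h=1}^{k}\frac{P_{\delta_{h+1}^{(w,s)}}(\mathbf t_n;q,t)}{P_{\delta_h^{(w,s)}}(\mathbf t_n;q,t)}.
\]
Each factor is given by \eqref{eq:endpoint-ratio} with $k$ replaced by $h$. The hypothesis $n\ge sh$ of Corollary~\ref{cor:consecutive} holds because $n\ge sk\ge sh$ for every $1\le h\le k$. The monomial prefactors are $t^{wsh(sh-1)/2}$, and their product over $h$ is exactly the displayed power of $t$ in \eqref{eq:complete-triangular-product}.

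\emph{Step 2: expanding the $h$-th factor.} I expand the $(q^wt^s)$-shifted factorials as products over $r=0,\ldots,h-1$. The numerator factor indexed by $(i,j,r)$ is
\[
1-q^{j-1+wr}t^{\,n-s(h-1)-i+1+sr}.
\]
The denominator factor indexed by $(i,j,r)$ is
\[
1-q^{j-1+wr}t^{\,s-i+1+sr}.
\]

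\emph{Step 3: re-indexing.} Set $r_1=r$ and $r_2=h-1-r$. As $h$ runs over $1,\ldots,k$ and $r$ over $0,\ldots,h-1$, the pair $(r_1,r_2)$ runs bijectively over the triangular set. The numerator $t$-exponent becomes $n-i+1-sr_2$. The denominator $t$-exponent becomes $s-i+1+sr_1$, which does not depend on $r_2$. This is consistent with the right-hand side of \eqref{eq:complete-triangular-product}, where the denominator is indexed only through $r_1$ yet still appears once for each pair $(r_1,r_2)$.

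\emph{Step 4: collecting over $j$.} For fixed $(r_1,r_2,i)$, the $q$-exponents $j-1+wr_1$ with $j=1,\ldots,w$ run over $wr_1,\ldots,wr_1+w-1$. Hence the product over $j$ is $(q^{wr_1}t^{n-i+1-sr_2};q)_w$ in the numerator and $(q^{wr_1}t^{s-i+1+sr_1};q)_w$ in the denominator. This yields \eqref{eq:complete-triangular-product}.

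The main obstacle is the bookkeeping in Step 3. One must check that the change of variables $(h,r)\mapsto(r_1,r_2)$ is a bijection onto the triangular set. One must also check that the $h$-dependence of the numerator exponent, $-s(h-1)+sr=-sr_2$, is captured exactly by $r_2$. As a cross-check, I would compare with a direct application of \eqref{eq:macdonald-evaluation} to $\delta_{k+1}^{(w,s)}$, organized by its rectangular blocks indexed by the boxes of $\delta_{k+1}$. In that reading, $r_1$ and $r_2$ play the roles of the block coarm and block coleg.
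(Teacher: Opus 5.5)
Your proposal is correct and matches the paper's proof essentially step for step. Like the paper, you telescope Corollary~\ref{cor:consecutive} from $\delta_1^{(w,s)}=\varnothing$, using $n\ge sk\ge sh$ for admissibility. You then expand the $(q^wt^s)$-shifted factorials over $0\le r<h$, reindex by $(r_1,r_2)=(r,h-1-r)$ onto the triangular set, and collect the $j$-products into $q$-shifted factorials of length $w$.
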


\begin{proof}
Since $\delta_1^{(w,s)}=\varnothing$ and $P_\varnothing=1$, iterating
Corollary~\ref{cor:consecutive} from $h=1$ to $h=k$ yields
\begin{equation}\label{eq:complete-telescoping}
P_{\delta_{k+1}^{(w,s)}}(\mathbf t_n;q,t)
=\prod_{h=1}^{k}
\frac{P_{\delta_{h+1}^{(w,s)}}(\mathbf t_n;q,t)}
     {P_{\delta_h^{(w,s)}}(\mathbf t_n;q,t)}.
\end{equation}
For $1\le h\le k$, the inequalities $n\ge sk\ge sh$ ensure the admissibility
condition for each factor. Expand the shifted factorials in
Corollary~\ref{cor:consecutive} over $0\le r<h$ and set
$r_1=r$ and $r_2=h-1-r$. The pairs $(h,r)$ with $1\le h\le k$ and
$0\le r<h$ correspond bijectively to the lattice points
$r_1,r_2\ge0$ with $r_1+r_2\le k-1$. Under this reindexing,
\[
n-s(h-1)-i+1+sr=n-i+1-sr_2,
\qquad
s-i+1+sr=s-i+1+sr_1.
\]
The powers of $t$ form the first product in
\eqref{eq:complete-triangular-product}. The product over $j=1,\ldots,w$
is the quotient of the two $q$-shifted factorials in the same formula.
\end{proof}

The map $(r_1,r_2)\mapsto(r_2+1,r_1+1)$ identifies the triangular index set
with the boxes of the ordinary staircase $\delta_{k+1}$. Each lattice point
corresponds to one block of shape $(w^s)$ in $\delta_{k+1}^{(w,s)}$.

\subsection{The centered endpoint product}

For $m\in\mathbb Z_{>0}$, set
\[
I_m=\left\{r-\frac{m-1}{2}:0\le r<m\right\}.
\]
For $k,w,s\in\mathbb Z_{>0}$ and an indeterminate $z$, define
\begin{equation}\label{eq:centered-N}
N_k^{(w,s)}(z;q,t)
=
\prod_{(r,i,j)\in I_k\times I_s\times I_w}
\bigl(1-zq^{j+wr}t^{i+sr}\bigr).
\end{equation}
This product belongs to
$\mathbb Z[q^{\pm\frac12},t^{\pm\frac12}][z]$. For $m\in\mathbb Z$, write
\[
z_m=q^{\frac{wk-1}{2}}t^{m-\frac{sk-1}{2}}.
\]

\begin{proposition}\label{prop:centered-endpoint}
Let $k,w,s,n\in\mathbb Z_{>0}$ with $n\ge sk$. The endpoint ratio is
\begin{equation}\label{eq:centered-endpoint}
\frac{P_{\delta_{k+1}^{(w,s)}}(\mathbf t_n;q,t)}
     {P_{\delta_k^{(w,s)}}(\mathbf t_n;q,t)}
=
 t^{\frac{wsk(sk-1)}{2}}
\frac{N_k^{(w,s)}(z_n;q,t)}
     {N_k^{(w,s)}(z_{sk};q,t)}.
\end{equation}
For every $m\in\mathbb Z$,
$N_k^{(w,s)}(z_m;q,t)\in\mathbb Z[q,t^{\pm1}]$. For $m\ge sk$,
$N_k^{(w,s)}(z_m;q,t)\in\mathbb Z[q,t]$.
\end{proposition}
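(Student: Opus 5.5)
The plan is to start from Corollary~\ref{cor:consecutive}, expand its shifted factorials into single factors indexed by $(r,i,j)$, and match them one by one with the factors of $N_k^{(w,s)}(z_m;q,t)$ for suitable $m$. Write the centered indices as $r=r_0-\frac{k-1}{2}$, $i=i_0-\frac{s-1}{2}$, $j=j_0-\frac{w-1}{2}$, with $0\le r_0<k$, $0\le i_0<s$, $0\le j_0<w$. Then each factor of $N_k^{(w,s)}(z_m;q,t)$ has the form
\[
1-q^{e}t^{f},\qquad e=\frac{wk-1}{2}+j+wr=j_0+wr_0,
\]
since the half-integer shifts cancel in the $q$-exponent. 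For the $t$-exponent we get
\[
f=m-\frac{sk-1}{2}+i+sr=m-sk+1+i_0+sr_0 .
\]
So each factor of $N_k^{(w,s)}(z_m;q,t)$ is $1-q^{j_0+wr_0}t^{m-sk+1+i_0+sr_0}$. This immediately gives the integrality claims. The $q$-exponent is a nonnegative integer. The $t$-exponent is an integer, and it is nonnegative once $m\ge sk-1$, in particular for $m\ge sk$. Hence the factors lie in $\mathbb Z[q,t^{\pm1}]$ for all $m$ and in $\mathbb Z[q,t]$ for $m\ge sk$.

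Next I match against \eqref{eq:endpoint-ratio}. Expanding $(q^{j-1}t^{n-s(k-1)-i+1};q^wt^s)_k$ over $0\le r<k$ gives factors
\[
1-q^{j-1+wr}t^{n-sk+s-i+1+sr},\qquad 1\le i\le s,\ 1\le j\le w.
\]
Setting $j_0=j-1$ and $i_0=s-i$ (so $i_0$ runs over $0,\ldots,s-1$) turns these into $1-q^{j_0+wr}t^{n-sk+1+i_0+sr}$. These are exactly the factors of $N_k^{(w,s)}(z_n;q,t)$. Similarly the denominator factors $1-q^{j-1+wr}t^{s-i+1+sr}$, with the same substitution, become $1-q^{j_0+wr}t^{1+i_0+sr}$. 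These are exactly the factors of $N_k^{(w,s)}(z_{sk};q,t)$. The prefactor $t^{wsk(sk-1)/2}$ is unchanged, which proves \eqref{eq:centered-endpoint}.

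The only delicate point is bookkeeping. One has to confirm that the centered index sets $I_k,I_s,I_w$ are in bijection with the ranges used in Corollary~\ref{cor:consecutive}. One also has to check that the reflection $i\mapsto s-i$ in the numerator is consistent, so that no sign or orientation of the $i$-index is lost. Once the half-integer shifts are seen to cancel, the argument reduces to reindexing finite products, and everything else is routine.
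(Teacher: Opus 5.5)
Your proposal is correct and is essentially the paper's proof: the paper also shifts the centered indices so that $N_k^{(w,s)}(z_m;q,t)=\prod_{r,i,j}\bigl(1-q^{j-1+wr}t^{m-s(k-1)-i+1+sr}\bigr)$, reads off the integrality claims, and matches $m=n$ and $m=sk$ with the numerator and denominator of Corollary~\ref{cor:consecutive}. The only difference is that you parametrize $I_s$ increasingly and then substitute $i_0=s-i$, whereas the paper builds this reflection in by writing the centered index as $\frac{s+1}{2}-i$. Your remark that the $t$-exponents are already nonnegative for $m\ge sk-1$ is a harmless sharpening.
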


\begin{proof}
Write the centered indices in \eqref{eq:centered-N} as
\[
r-\frac{k-1}{2},\qquad \frac{s+1}{2}-i,\qquad j-\frac{w+1}{2},
\]
where $0\le r\le k-1$, $1\le i\le s$, and $1\le j\le w$. For
$m\in\mathbb Z$ this becomes
\[
N_k^{(w,s)}(z_m;q,t)
=
\prod_{r=0}^{k-1}\prod_{i=1}^{s}\prod_{j=1}^{w}
\left(1-q^{j-1+wr}t^{m-s(k-1)-i+1+sr}\right).
\]
All powers of $q$ and $t$ are integral, and the powers of $q$ are nonnegative. If $m\ge sk$,
the powers of $t$ are also nonnegative. The cases $m=n$ and $m=sk$ are the
numerator and denominator products in Corollary~\ref{cor:consecutive}, respectively.
The remaining factor is $t^{\frac{wsk(sk-1)}{2}}$.
\end{proof}

For every integer $m\ge sk$, $N_k^{(w,s)}(z_m;q,t)$ is the product of the
numerator factors in \eqref{eq:macdonald-evaluation} over the boxes of
$\delta_{k+1}^{(w,s)}/\delta_k^{(w,s)}$ for $m$ variables.

\begin{example}\label{ex:centered-endpoint}
For $(k,w,s)=(2,1,2)$,
\[
z_m=q^{\frac12}t^{m-\frac32},
\qquad
N_2^{(1,2)}(z_m;q,t)
=(1-t^{m-3})(1-t^{m-2})(1-qt^{m-1})(1-qt^m).
\]
Thus the exponents in $\frac12\mathbb Z$ occurring in \eqref{eq:centered-N} become integral after
specialization at $z=z_m$. At $m=4,5$,
\[
\frac{N_2^{(1,2)}(z_5;q,t)}{N_2^{(1,2)}(z_4;q,t)}
=
\frac{1-t^3}{1-t}\frac{1-qt^5}{1-qt^3},
\]
and Proposition~\ref{prop:centered-endpoint} reproduces the endpoint ratio for
$\delta_2^{(1,2)}=(1,1)$ and $\delta_3^{(1,2)}=(2,2,1,1)$ in five variables.
\end{example}

\begin{proposition}\label{prop:centered-symmetries}
Let $k,w,s\in\mathbb Z_{>0}$. The following identities hold:
\begin{align}
N_k^{(w,s)}(z;q,t)
&=N_k^{(s,w)}(z;t,q),
\label{eq:centered-exchange}\\
N_k^{(w,s)}(z;q,t)
&=(-1)^{wsk}z^{wsk}N_k^{(w,s)}(z^{-1};q,t),
\label{eq:centered-reciprocity}\\
N_k^{(w,s)}(z;q^{-1},t^{-1})
&=N_k^{(w,s)}(z;q,t).
\label{eq:centered-inversion}
\end{align}
\end{proposition}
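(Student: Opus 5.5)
All three identities will follow from reindexing the finite product \eqref{eq:centered-N}. The key facts are that each index set $I_m$ is symmetric under negation and that the exponents are bilinear in the indices. Throughout, I write the generic factor as $1-zq^{j+wr}t^{i+sr}$ with $(r,i,j)\in I_k\times I_s\times I_w$.

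\emph{Exchange.} By definition, $N_k^{(s,w)}(z;t,q)$ is the product over $(r,i',j')\in I_k\times I_w\times I_s$ of $1-z\,t^{j'+sr}q^{i'+wr}$. I will rename $i'=j$ and $j'=i$, so that $j\in I_w$ and $i\in I_s$. Each factor then becomes $1-zq^{j+wr}t^{i+sr}$, and the index set becomes exactly $I_k\times I_s\times I_w$. This proves \eqref{eq:centered-exchange}; the identity is just a relabelling of the product.

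\emph{Reciprocity.} For each factor I will write
\[
1-zX=-zX\bigl(1-z^{-1}X^{-1}\bigr),\qquad X=q^{j+wr}t^{i+sr}.
\]
Taking the product over all $wsk$ triples gives a prefactor $(-1)^{wsk}z^{wsk}\prod X$, multiplied by $\prod(1-z^{-1}X^{-1})$. Since each $I_m$ is closed under negation, the substitution $(r,i,j)\mapsto(-r,-i,-j)$ permutes $I_k\times I_s\times I_w$ and sends $X$ to $X^{-1}$. This has two consequences:
\begin{itemize}
\item the product $\prod X$ equals its own inverse; its exponents also sum to zero directly, because $\sum_{I_m}=0$ and the sums over $I_w$, $I_s$, $I_k$ factor, so $\prod X=1$;
\item $\prod(1-z^{-1}X^{-1})=\prod(1-z^{-1}X)=N_k^{(w,s)}(z^{-1};q,t)$.
\end{itemize}
Together these give \eqref{eq:centered-reciprocity}.

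\emph{Inversion.} Replacing $(q,t)$ by $(q^{-1},t^{-1})$ turns each factor into $1-zX^{-1}$. Applying the same negation bijection $(r,i,j)\mapsto(-r,-i,-j)$ to the index set turns this back into the original product, which proves \eqref{eq:centered-inversion}.

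The only step that needs care is confirming that $\prod X=1$ exactly, with no leftover half-integer powers of $q$ or $t$. I expect no real obstacle here: the total exponent of $q$ is $ks\sum_{I_w}j+ws\,w\sum_{I_k}r=0$, and the exponent of $t$ vanishes in the same way.
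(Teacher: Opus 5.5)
Your proposal is correct and follows the paper's proof: the exchange identity is the relabelling $(r,i,j)\mapsto(r,j,i)$ combined with $w\leftrightarrow s$ and $q\leftrightarrow t$. Reciprocity and inversion both come from the negation involution $(r,i,j)\mapsto(-r,-i,-j)$ on $I_k\times I_s\times I_w$, together with the vanishing of the centered index sums, which gives $\prod X=1$; the only cosmetic difference is that you factor $1-zX=-zX(1-z^{-1}X^{-1})$ from the left-hand side, whereas the paper expands $(-1)^{wsk}z^{wsk}N_k^{(w,s)}(z^{-1};q,t)$ from the right.
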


\begin{proof}
The bijection $(r,i,j)\mapsto(r,j,i)$ together with $s\leftrightarrow w$ and
$t\leftrightarrow q$ gives \eqref{eq:centered-exchange}. The involution
$(r,i,j)\mapsto(-r,-i,-j)$ permutes $I_k\times I_s\times I_w$. Since every
centered index set has sum zero,
\[
\prod_{(r,i,j)\in I_k\times I_s\times I_w}q^{j+wr}t^{i+sr}=1.
\]
Consequently,
\[
\begin{aligned}
(-1)^{wsk}z^{wsk}N_k^{(w,s)}(z^{-1};q,t)
&=\prod_{(r,i,j)\in I_k\times I_s\times I_w}
q^{j+wr}t^{i+sr}\bigl(1-zq^{-j-wr}t^{-i-sr}\bigr)\\
&=N_k^{(w,s)}(z;q,t),
\end{aligned}
\]
where the last equality uses the same involution. Thus
\eqref{eq:centered-reciprocity} holds. Applying the involution directly to
$N_k^{(w,s)}(z;q^{-1},t^{-1})$ establishes \eqref{eq:centered-inversion}.
\end{proof}

The exchange identity reflects conjugation of the skew diagram
$\delta_{k+1}^{(w,s)}/\delta_k^{(w,s)}$, which interchanges the block dimensions
$w$ and $s$. The reciprocity identity reflects the $180^\circ$ rotational
symmetry of this block decomposition inside $((wk)^{sk})$. The exchange identity
applies to the centered product, not to the finite principal specialization
ratio in \eqref{eq:centered-endpoint}. The specialization points $z_n$ and
$z_{sk}$ and the power of $t$ in \eqref{eq:centered-endpoint} are not preserved
by the simultaneous exchanges $w\leftrightarrow s$ and $q\leftrightarrow t$.
The symmetry in \eqref{eq:centered-exchange} differs from Gillespie's
$q,t$-symmetry for transformed Macdonald polynomials of conjugate
shapes~\cite[Section~1]{Gillespie}.

Every partition $\lambda$ satisfies
\[
\sum_{u\in\lambda}a_\lambda(u)
=\sum_{u\in\lambda}a'_\lambda(u)=n(\lambda'),
\qquad
\sum_{u\in\lambda}l_\lambda(u)
=\sum_{u\in\lambda}l'_\lambda(u)=n(\lambda).
\]
For $n\ge\ell(\lambda)$, substituting $(q,t)\mapsto(q^{-1},t^{-1})$ in
\eqref{eq:macdonald-evaluation} and using these identities gives
\[
P_\lambda(\mathbf t_n^{-1};q^{-1},t^{-1})
=t^{-(n-1)\lvert\lambda\rvert}P_\lambda(\mathbf t_n;q,t).
\]
Since consecutive generalized staircases differ in size by $wsk$, their endpoint
ratio satisfies
\begin{equation}\label{eq:parameter-inversion}
\frac{P_{\delta_{k+1}^{(w,s)}}(\mathbf t_n;q,t)}
     {P_{\delta_k^{(w,s)}}(\mathbf t_n;q,t)}
=t^{wsk(n-1)}
\frac{P_{\delta_{k+1}^{(w,s)}}(\mathbf t_n^{-1};q^{-1},t^{-1})}
     {P_{\delta_k^{(w,s)}}(\mathbf t_n^{-1};q^{-1},t^{-1})}.
\end{equation}
Under $(q,t)\mapsto(q^{-1},t^{-1})$, the point $z_m$ becomes $z_m^{-1}$.
Equations~\eqref{eq:centered-reciprocity} and \eqref{eq:centered-inversion}, together
with $z_n=t^{n-sk}z_{sk}$, show that the quotient of the centered products in
\eqref{eq:centered-endpoint} is multiplied by $t^{-wsk(n-sk)}$. The powers of $t$ satisfy
\[
wsk(n-1)-wsk(n-sk)-\frac{wsk(sk-1)}2
=\frac{wsk(sk-1)}2.
\]
This proves \eqref{eq:parameter-inversion}.

\section{Degenerations and combinatorial consequences}

\subsection{Hall--Littlewood specialization}

At $q=0$, $P_\lambda(x_1,\ldots,x_n;0,t)$ is the monic Hall--Littlewood
polynomial. Its principal specialization appears in
Macdonald~\cite[Chapter~III, Section~2, Equations~(2.11)--(2.12) and Example~1]{Macdonald}.
A rectangle of width or height zero is understood to be the empty partition.

\begin{theorem}\label{thm:HL}
Let $k,w,s,n\in\mathbb Z_{>0}$ and $g\in\{0,1,\ldots,s\}$. Suppose that
$n\ge\ell\bigl(\delta_{k,g}^{(w,s)}\bigr)$. The ratio is
\begin{equation}\label{eq:HL}
\frac{P_{\delta_{k,g}^{(w,s)}}(\mathbf t_n;0,t)}
     {P_{\delta_k^{(w,s)}}(\mathbf t_n;0,t)}
=t^{\frac{wkg(g-1)}{2}+\frac{wgs\,k(k-1)}{2}}
\begin{bmatrix}n-s(k-1)\\ g\end{bmatrix}_t.
\end{equation}
Equivalently,
\begin{equation}\label{eq:HL-rectangle}
t^{-\frac{wkg(g-1)}{2}-\frac{wgs\,k(k-1)}{2}}
\frac{P_{\delta_{k,g}^{(w,s)}}(\mathbf t_n;0,t)}
     {P_{\delta_k^{(w,s)}}(\mathbf t_n;0,t)}
=
\sum_{\nu\subseteq((n-s(k-1)-g)^g)}t^{\lvert\nu\rvert}.
\end{equation}
\end{theorem}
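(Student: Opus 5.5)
The plan is to specialize Corollary~\ref{cor:arbitrary-g} at $q=0$. Equation~\eqref{eq:arbitrary-g} is an identity of rational functions in $q$ and $t$, and after removing the monomial factor every factor has the form $1-q^at^b$ with $a\ge0$. Every denominator factor has $b\ge1$, so it stays nonzero at $q=0$. Setting $q=0$ is therefore legitimate, and the left side becomes the Hall--Littlewood ratio.

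At $q=0$, each factor with $j-1+wr\ge1$ equals $1$. Only the term $(r,j)=(0,1)$ survives, and the product reduces to
\[
\frac{(t^{n-s(k-1)-g+1};t)_g}{(t;t)_g}.
\]
Set $N=n-s(k-1)$. The hypothesis $n\ge\ell(\delta_{k,g}^{(w,s)})$ and \eqref{eq:intermediate-size-length} give $N\ge g$. The numerator telescopes as
\[
(t^{N-g+1};t)_g=\frac{(t;t)_N}{(t;t)_{N-g}},
\]
so the surviving quotient is exactly the Gaussian polynomial $\begin{bmatrix}N\\ g\end{bmatrix}_t$. The monomial prefactor is carried over unchanged from \eqref{eq:arbitrary-g}, which proves \eqref{eq:HL}.

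For \eqref{eq:HL-rectangle}, I will invoke the classical fact that $\begin{bmatrix}a+b\\ b\end{bmatrix}_t$ is the generating function $\sum_{\nu\subseteq(a^b)}t^{\lvert\nu\rvert}$ of partitions in an $a\times b$ box, taking $a=N-g$ and $b=g$. If $N-g=0$ or $g=0$, the box is empty by the stated convention, and both sides equal $1$.

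If a self-contained argument is preferred, I would prove the box identity by induction using the $t$-Pascal recurrence
\[
\begin{bmatrix}a+b\\ b\end{bmatrix}_t=\begin{bmatrix}a+b-1\\ b-1\end{bmatrix}_t+t^{b}\begin{bmatrix}a+b-1\\ b\end{bmatrix}_t.
\]
On the partition side, this corresponds to splitting according to whether $\nu_b=0$ or $\nu_b\ge1$; in the second case one subtracts $1$ from every part. There is no real obstacle. The only care needed is justifying the $q=0$ specialization, namely that no denominator vanishes, and handling the boundary cases $g=0$ and $N=g$.
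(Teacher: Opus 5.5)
Your argument is correct. It is exactly the alternative the paper notes in a one-line remark after the proof: set $q=0$ in Corollary~\ref{cor:arbitrary-g}.

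The paper's actual proof takes a different route and does not use the Section~4 cancellations. It sets $q=0$ directly in \eqref{eq:macdonald-evaluation} for each partition separately. Only the first-column numerator factors survive, contributing $(t;t)_n/(t;t)_{n-\ell(\lambda)}$. Only the row-end denominator factors survive, contributing $(t;t)_r$ for each distinct positive part of multiplicity $r$. This gives closed forms for $P_{\delta_k^{(w,s)}}(\mathbf t_n;0,t)$ and $P_{\delta_{k,g}^{(w,s)}}(\mathbf t_n;0,t)$, which differ only through the length $s(k-1)$ versus $s(k-1)+g$ and the new multiplicity $g$. Dividing them and using $n(\delta_{k,g}^{(w,s)})-n(\delta_k^{(w,s)})=wk\binom g2+g\lvert\delta_k^{(w,s)}\rvert$ yields \eqref{eq:HL}.

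Your route is shorter and shows the Hall--Littlewood formula as a literal specialization of the $q$-general cumulative ratio. The paper's route is independent of the cancellation lemmas, explains why only part multiplicities matter at $q=0$, and produces the individual Hall--Littlewood specializations as by-products. For \eqref{eq:HL-rectangle} the paper simply cites Stanley, whereas your $t$-Pascal induction is self-contained.

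One small point to add: passing from the rational-function identity to the ratio of Hall--Littlewood values uses $P_{\delta_k^{(w,s)}}(\mathbf t_n;0,t)\neq0$. This is immediate from \eqref{eq:macdonald-evaluation} at $q=0$, since the surviving factors $1-t^{n-i+1}$ with $i\le n$ and $1-t^{l_\lambda(u)+1}$ are nonzero.
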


\begin{proof}
At $q=0$, the numerator factors in \eqref{eq:macdonald-evaluation} are $1$
except for boxes in the first column. For a partition $\lambda$ of length $d$,
their product is
\[
\prod_{i=1}^{d}(1-t^{n-i+1})=\frac{(t;t)_n}{(t;t)_{n-d}}.
\]
The denominator factors are $1$ except for the rightmost boxes of the rows.
If a positive part has multiplicity $r$, the values $l_\lambda(u)+1$ at these
boxes are $1,\ldots,r$. Their denominator contribution is $(t;t)_r$.
The $k-1$ distinct positive parts of $\delta_k^{(w,s)}$ have multiplicity $s$.
For $\delta_{k,g}^{(w,s)}$, the part $wk$ has multiplicity $g$, and the
remaining $k-1$ distinct positive parts have multiplicity $s$. The resulting
principal specializations are
\begin{align}
P_{\delta_k^{(w,s)}}(\mathbf t_n;0,t)
&=t^{n(\delta_k^{(w,s)})}
\frac{(t;t)_n}{(t;t)_{n-s(k-1)}(t;t)_s^{k-1}},
\label{eq:HL-stair}\\
P_{\delta_{k,g}^{(w,s)}}(\mathbf t_n;0,t)
&=t^{n(\delta_{k,g}^{(w,s)})}
\frac{(t;t)_n}{(t;t)_{n-s(k-1)-g}(t;t)_g(t;t)_s^{k-1}}.
\label{eq:HL-intermediate}
\end{align}
After adjoining trailing zeros, inserting the $g$ parts equal to $wk$ before
the positive parts of $\delta_k^{(w,s)}$ changes $n(\lambda)$ by
\begin{equation}\label{eq:n-difference}
wk\binom{g}{2}+g\lvert\delta_k^{(w,s)}\rvert
=\frac{wkg(g-1)}{2}+\frac{wgs\,k(k-1)}{2}.
\end{equation}
Dividing \eqref{eq:HL-intermediate} by \eqref{eq:HL-stair} and using
\eqref{eq:n-difference} proves \eqref{eq:HL}. The Gaussian identity
\cite[Proposition~1.7.3]{StanleyEC1} specializes to
\[
\begin{bmatrix}n-s(k-1)\\ g\end{bmatrix}_t
=\sum_{\nu\subseteq((n-s(k-1)-g)^g)}t^{\lvert\nu\rvert}.
\]
Substitution into \eqref{eq:HL} proves \eqref{eq:HL-rectangle}.
\end{proof}

Equation~\eqref{eq:HL} follows by setting $q=0$ in
Corollary~\ref{cor:arbitrary-g}.

\begin{corollary}\label{cor:HL-general-ratio}
Let $k,w,s,n\in\mathbb Z_{>0}$ and $0\le g<h\le s$. Suppose that
$n\ge s(k-1)+h$. The ratio between the two intermediate partitions is
\begin{align}
\frac{P_{\delta_{k,h}^{(w,s)}}(\mathbf t_n;0,t)}
     {P_{\delta_{k,g}^{(w,s)}}(\mathbf t_n;0,t)}
&=t^{\frac{wk(h-g)\{s(k-1)+g+h-1\}}{2}}
\frac{\begin{bmatrix}n-s(k-1)\\ h\end{bmatrix}_t}
     {\begin{bmatrix}n-s(k-1)\\ g\end{bmatrix}_t}
\label{eq:HL-general-ratio}\\
&=t^{\frac{wk(h-g)\{s(k-1)+g+h-1\}}{2}}
\frac{(t^{n-s(k-1)-h+1};t)_{h-g}}
     {(t^{g+1};t)_{h-g}}.
\label{eq:HL-general-ratio-factorial}
\end{align}
\end{corollary}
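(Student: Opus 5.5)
The plan is to obtain the ratio by dividing two instances of Theorem~\ref{thm:HL}, one with upper index $h$ and one with upper index $g$. Both share the denominator $P_{\delta_k^{(w,s)}}(\mathbf t_n;0,t)$, which is nonzero: it is a power of $t$ times a quotient of nonzero $t$-shifted factorials by \eqref{eq:HL-stair}.

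First I check admissibility. The hypothesis $n\ge s(k-1)+h$ equals $\ell(\delta_{k,h}^{(w,s)})$ by \eqref{eq:intermediate-size-length}, and it dominates $\ell(\delta_{k,g}^{(w,s)})$ because $g<h$. So \eqref{eq:HL} applies at both indices. Dividing the two instances, the common denominator cancels, and the ratio of Gaussian polynomials appears exactly as in \eqref{eq:HL-general-ratio}.

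The $t$-exponent is the difference
\[
\frac{wk\{h(h-1)-g(g-1)\}}{2}+\frac{ws\,k(k-1)(h-g)}{2}.
\]
I factor $h(h-1)-g(g-1)=(h-g)(h+g-1)$, so the total is $\frac{wk(h-g)\{s(k-1)+g+h-1\}}{2}$. This proves \eqref{eq:HL-general-ratio}.

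For \eqref{eq:HL-general-ratio-factorial}, set $N=n-s(k-1)$ and expand the Gaussian polynomials. The quotient becomes
\[
\frac{(t;t)_g\,(t;t)_{N-g}}{(t;t)_h\,(t;t)_{N-h}}.
\]
I then use $(t;t)_h/(t;t)_g=(t^{g+1};t)_{h-g}$ and $(t;t)_{N-g}/(t;t)_{N-h}=(t^{N-h+1};t)_{h-g}$, where the latter needs $N-h\ge0$, which is the hypothesis. This gives the second line.

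As a cross-check, the same result should follow by setting $q=0$ in Corollary~\ref{cor:arbitrary-gh}. At $q=0$, only the factor with $j-1+wr=0$ survives, namely $(r,j)=(0,1)$, and it reproduces \eqref{eq:HL-general-ratio-factorial} directly. No step is a real obstacle; the only care needed is the exponent bookkeeping and the range condition $N\ge h$.
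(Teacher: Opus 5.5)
Your proposal is correct and matches the paper's proof: divide the instances of \eqref{eq:HL} at indices $h$ and $g$, simplify the exponent using $h(h-1)-g(g-1)=(h-g)(h+g-1)$, and expand the Gaussian polynomials to obtain the shifted-factorial form. Your cross-check at $q=0$ in Corollary~\ref{cor:arbitrary-gh} is also valid, since only the factor with $(r,j)=(0,1)$ survives there.
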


\begin{proof}
Dividing the instance of \eqref{eq:HL} with index $h$ by that with index $g$
yields \eqref{eq:HL-general-ratio}. The shifted-factorial form
\eqref{eq:HL-general-ratio-factorial} follows from the definition of the Gaussian polynomial.
\end{proof}

\begin{example}\label{ex:HL-rectangle}
With $(k,w,s,g,n)=(3,2,3,2,10)$, Theorem~\ref{thm:HL} reads
\[
\frac{P_{(6^2,4^3,2^3)}(\mathbf t_{10};0,t)}
     {P_{(4^3,2^3)}(\mathbf t_{10};0,t)}
=t^{42}
\begin{bmatrix}4\\2\end{bmatrix}_t
=t^{42}(1+t+2t^2+t^3+t^4).
\]
The Gaussian factor is the rank generating polynomial for partitions in a
$2\times2$ rectangle. The partitions, grouped by size, are
\[
\begin{tikzpicture}[x=0.42cm,y=-0.42cm,baseline=(current bounding box.center),
                    line width=0.35pt]
\node at (0,0) {$0$};
\node at (4,0) {$1$};
\node at (8.5,0) {$2$};
\node at (14,0) {$3$};
\node at (18,0) {$4$};
\node at (0,2.2) {$\varnothing$};
\draw (3.5,1.7) rectangle +(1,1);
\draw (7.0,1.7) rectangle +(1,1);
\draw (8.0,1.7) rectangle +(1,1);
\draw (10.0,1.2) rectangle +(1,1);
\draw (10.0,2.2) rectangle +(1,1);
\draw (13.3,1.2) rectangle +(1,1);
\draw (14.3,1.2) rectangle +(1,1);
\draw (13.3,2.2) rectangle +(1,1);
\draw (17.3,1.2) rectangle +(1,1);
\draw (18.3,1.2) rectangle +(1,1);
\draw (17.3,2.2) rectangle +(1,1);
\draw (18.3,2.2) rectangle +(1,1);
\end{tikzpicture}
\]
The coefficient $2$ of $t^2$ records the two partitions of size $2$.
\end{example}

\begin{corollary}\label{cor:HL-local}
Let $k,w,s,n\in\mathbb Z_{>0}$ and $g\in\{0,1,\ldots,s-1\}$ with
$n\ge\ell\bigl(\delta_{k,g+1}^{(w,s)}\bigr)$. The successive ratio is
\begin{equation}\label{eq:HL-local}
\frac{P_{\delta_{k,g+1}^{(w,s)}}(\mathbf t_n;0,t)}
     {P_{\delta_{k,g}^{(w,s)}}(\mathbf t_n;0,t)}
=t^{gwk+\frac{ws\,k(k-1)}{2}}
\frac{1-t^{n-s(k-1)-g}}{1-t^{g+1}}.
\end{equation}
After removing the displayed power of $t$, the ratio is coefficientwise
nonnegative in $\mathbb Z[[t]]$ if and only if
$n-s(k-1)-g$ is divisible by $g+1$.
\end{corollary}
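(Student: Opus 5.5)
The identity \eqref{eq:HL-local} comes directly from Corollary~\ref{cor:HL-general-ratio} with $h=g+1$. Equivalently, set $q=0$ in Theorem~\ref{thm:local-ratio}: every factor with $j-1+wr\ge1$ becomes $1$, and only the factor with $(r,j)=(0,1)$ survives. That factor is $(1-t^{n-s(k-1)-g})/(1-t^{g+1})$, and the monomial prefactor is unchanged. Admissibility holds because $n\ge\ell(\delta_{k,g+1}^{(w,s)})=s(k-1)+g+1$. This step is routine.

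For the nonnegativity criterion, set $N=n-s(k-1)-g$. Admissibility gives $N\ge1$, and we must decide when $(1-t^N)/(1-t^{g+1})\in\mathbb Z[[t]]$ has nonnegative coefficients. The sufficiency direction is immediate. If $N=m(g+1)$, the quotient is the polynomial $1+t^{g+1}+\cdots+t^{(m-1)(g+1)}$, whose coefficients are all nonnegative.

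For the converse, write $N=m(g+1)+c$ with $0<c<g+1$ and $m\ge0$. Expand
\[
\frac{1-t^N}{1-t^{g+1}}=\sum_{a\ge0}t^{a(g+1)}-\sum_{a\ge0}t^{N+a(g+1)}.
\]
The first sum lives only in exponents divisible by $g+1$. The second sum lives in exponents congruent to $c\not\equiv0 \pmod{g+1}$. No cancellation can occur between them, so the coefficient of $t^N$ is $-1$ and nonnegativity fails.

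The only point needing care is the no-cancellation argument in the converse. It is settled by the residue classes modulo $g+1$. This is the same reasoning used for the case $wk=1$ in Corollary~\ref{cor:local-positivity}, now with $n-g$ replaced by $n-s(k-1)-g$.
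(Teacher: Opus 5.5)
Your proposal is correct and follows the paper's proof: the identity is the case $h=g+1$ of Corollary~\ref{cor:HL-general-ratio}, sufficiency comes from the finite geometric series, and necessity comes from the coefficient $-1$ at $t^{n-s(k-1)-g}$ in the same two-sum expansion. Your residue-class remark only makes explicit the no-cancellation step that the paper leaves implicit.
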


\begin{proof}
Equation~\eqref{eq:HL-local} is the case $h=g+1$ of
Corollary~\ref{cor:HL-general-ratio}. Admissibility implies
$n-s(k-1)-g\ge1$. If $n-s(k-1)-g$ is divisible by $g+1$, the last factor in
\eqref{eq:HL-local} is a finite geometric series with nonnegative coefficients.
If $n-s(k-1)-g$ is not divisible by $g+1$, its expansion
\[
\frac{1-t^{n-s(k-1)-g}}{1-t^{g+1}}
=\sum_{r\ge0}t^{(g+1)r}
-\sum_{r\ge0}t^{n-s(k-1)-g+(g+1)r}
\]
has coefficient $-1$ at $t^{n-s(k-1)-g}$. This proves the assertion.
\end{proof}

\begin{corollary}\label{cor:HL-vertical}
Let $k,w,s,n\in\mathbb Z_{>0}$ with $n\ge wk$. For
$g\in\{0,1,\ldots,s-1\}$, the successive ratio is
\begin{equation}\label{eq:HL-vertical-local}
\frac{P_{(\delta_{k,g+1}^{(w,s)})'}(\mathbf t_n;0,t)}
     {P_{(\delta_{k,g}^{(w,s)})'}(\mathbf t_n;0,t)}
=t^{\frac{wk(wk-1)}{2}}
\begin{cases}
\displaystyle
\begin{bmatrix}n-wk+w\\ w\end{bmatrix}_t,&g=0,\\[10pt]
1,&1\le g<s.
\end{cases}
\end{equation}
Consequently, for $g\in\{1,\ldots,s\}$,
\begin{equation}\label{eq:HL-vertical-arbitrary}
\frac{P_{(\delta_{k,g}^{(w,s)})'}(\mathbf t_n;0,t)}
     {P_{(\delta_k^{(w,s)})'}(\mathbf t_n;0,t)}
=t^{\frac{gwk(wk-1)}{2}}
\begin{bmatrix}n-wk+w\\ w\end{bmatrix}_t.
\end{equation}
\end{corollary}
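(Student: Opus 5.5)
The plan is to set $q=0$ in Theorem~\ref{thm:vertical-local-ratio}. Each factor in \eqref{eq:vertical-local-ratio} has the form
\[
\frac{1-q^{sr+g}t^{n-wk+wr+j}}{1-q^{sr+g}t^{wr+j}}.
\]
At $q=0$ such a factor equals $1$ whenever $sr+g>0$. It survives, with $q^0=1$, only when $sr+g=0$. Since $s\ge1$, this happens exactly when $r=0$ and $g=0$. Both sides of \eqref{eq:vertical-local-ratio} are rational functions whose denominators do not vanish at $q=0$, because every denominator factor contains a positive power of $t$. So substituting $q=0$ is legitimate.

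Case $1\le g<s$: every factor becomes $1$, and the ratio reduces to $t^{wk(wk-1)/2}$.

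Case $g=0$: only the $r=0$ factors remain, giving
\[
\prod_{j=1}^{w}\frac{1-t^{n-wk+j}}{1-t^{j}}=\frac{(t^{n-wk+1};t)_w}{(t;t)_w}.
\]
This equals $\left[\begin{smallmatrix}n-wk+w\\ w\end{smallmatrix}\right]_t$ by the definition of the Gaussian polynomial, after writing $(t^{a+1};t)_w=(t;t)_{a+w}/(t;t)_a$ with $a=n-wk\ge0$. This proves \eqref{eq:HL-vertical-local}.

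For \eqref{eq:HL-vertical-arbitrary}, I will telescope the successive ratios from $0$ to $g$, using $(\delta_{k,0}^{(w,s)})'=(\delta_k^{(w,s)})'$ from \eqref{eq:intermediate-endpoints}. Only the first step contributes the Gaussian factor. Each of the $g$ steps contributes $t^{wk(wk-1)/2}$, so their product is $t^{gwk(wk-1)/2}$.

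Alternatively, set $q=0$ directly in Corollary~\ref{cor:vertical-arbitrary-g}: the factor $(q^{sr}t^{\cdots};q)_g$ at $q=0$ keeps only its $i=0$ term when $r=0$, and is $1$ when $r\ge1$. Both routes give the same result.

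There is no real obstacle here. The only points needing care are the admissibility condition $n\ge wk$ for every term and the bookkeeping of which exponent $sr+g$ vanishes.
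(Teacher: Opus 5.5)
Your proposal is correct and matches the paper's proof: set $q=0$ in Theorem~\ref{thm:vertical-local-ratio}, observe that only the factors with $sr+g=0$ (that is, $g=0$ and $r=0$) survive, identify $\prod_{j=1}^{w}(1-t^{n-wk+j})/(1-t^{j})$ as $\begin{bmatrix}n-wk+w\\ w\end{bmatrix}_t$, and multiply the first $g$ successive ratios. Your remark that the substitution $q=0$ is legitimate, and your alternative route through Corollary~\ref{cor:vertical-arbitrary-g}, are both sound additions that the paper does not spell out.
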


\begin{proof}
For $g\ge1$, $sr+g>0$ for every $r$. At $q=0$, all product factors in
Theorem~\ref{thm:vertical-local-ratio} equal $1$. For $g=0$, the power of $q$
is zero precisely for $r=0$. The product is
\[
\prod_{j=1}^{w}
\frac{1-t^{n-wk+j}}{1-t^j}
=
\frac{(t^{n-wk+1};t)_w}{(t;t)_w}
=
\begin{bmatrix}n-wk+w\\ w\end{bmatrix}_t.
\]
This is \eqref{eq:HL-vertical-local}. Multiplication over the first $g$ vertical
steps gives \eqref{eq:HL-vertical-arbitrary}.
\end{proof}

\begin{example}\label{ex:HL-vertical}
For $(k,w,s,n)=(3,2,3,10)$, $wk=6$. The three successive ratios in
the conjugate vertical sequence are
\[
\renewcommand{\arraystretch}{1.45}
\begin{array}{c|c}
\hline
\rule{0pt}{2.8ex}\text{vertical step} & \text{Hall--Littlewood ratio}\\
\hline
\rule{0pt}{5.0ex}g=0\to1 & \displaystyle t^{15}\begin{bmatrix}6\\2\end{bmatrix}_t\\[5pt]
g=1\to2 & t^{15}\\[2pt]
g=2\to3 & t^{15}\\
\hline
\end{array}
\]
The Gaussian polynomial occurs only in the initial vertical step. The two later
ratios are powers of $t$.
\end{example}

Corollary~\ref{cor:HL-local} shows that a horizontal successive ratio need not
be coefficientwise nonnegative after removing its power of $t$. Every conjugate
vertical successive ratio in the Hall--Littlewood specialization has nonnegative
coefficients by \eqref{eq:HL-vertical-local}.

At $n=sk$, the Gaussian polynomial in \eqref{eq:HL} is
$\begin{bmatrix}s\\ g\end{bmatrix}_t$ and is invariant under
$g\leftrightarrow s-g$. Under the same replacement, the power of $t$ in
\eqref{eq:HL} changes by $\frac{wk(sk-1)(s-2g)}{2}$. The resulting identity is the Hall--Littlewood
specialization of Corollary~\ref{cor:reflection}. The
Gaussian symmetry is induced by transposition inside a $g\times(s-g)$
rectangle. The reflection in Corollary~\ref{cor:reflection} comes from
complementation inside $((wk)^{sk})$.

At $g=s$, \eqref{eq:intermediate-endpoints} identifies \eqref{eq:HL} with the endpoint ratio
\begin{equation}\label{eq:HL-endpoint}
\frac{P_{\delta_{k+1}^{(w,s)}}(\mathbf t_n;0,t)}
     {P_{\delta_k^{(w,s)}}(\mathbf t_n;0,t)}
=t^{\frac{wsk(sk-1)}{2}}
\begin{bmatrix}n-sk+s\\ s\end{bmatrix}_t.
\end{equation}
The product of the endpoint ratios for $h=1,\ldots,k$ has an interpretation in
terms of tuples of partitions in rectangles.

\begin{corollary}\label{cor:HL-product}
Let $k,w,s,n\in\mathbb Z_{>0}$ with $n\ge sk$. The normalized principal specialization is
\begin{equation}\label{eq:HL-product}
\left(\prod_{h=1}^{k} t^{-\frac{ws}{2}h(sh-1)}\right)
P_{\delta_{k+1}^{(w,s)}}(\mathbf t_n;0,t)
=
\prod_{h=1}^{k}
\begin{bmatrix}n-sh+s\\ s\end{bmatrix}_t.
\end{equation}
Equivalently, the product on the right is
\begin{equation}\label{eq:HL-tuples}
\sum_{\substack{\nu^{(1)},\ldots,\nu^{(k)}\\
\nu^{(h)}\subseteq((n-sh)^s),\ 1\le h\le k}}
 t^{\lvert\nu^{(1)}\rvert+\cdots+\lvert\nu^{(k)}\rvert}.
\end{equation}
\end{corollary}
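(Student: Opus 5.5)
The plan is to telescope the Hall--Littlewood endpoint ratios, in the same way as Corollary~\ref{cor:complete-product}. Since $\delta_1^{(w,s)}=\varnothing$ and $P_\varnothing=1$, the principal specialization of $\delta_{k+1}^{(w,s)}$ at $q=0$ factors as the product over $h=1,\ldots,k$ of the ratios
\[
P_{\delta_{h+1}^{(w,s)}}(\mathbf t_n;0,t)/P_{\delta_h^{(w,s)}}(\mathbf t_n;0,t).
\]
For each $h\le k$ the inequality $n\ge sk\ge sh$ gives admissibility. This is the hypothesis needed for \eqref{eq:HL-endpoint} with $k$ replaced by $h$. Each factor therefore equals
\[
t^{\frac{wsh(sh-1)}2}\begin{bmatrix}n-sh+s\\ s\end{bmatrix}_t.
\]
Multiplying over $h$ gives the power of $t$
\[
\prod_{h=1}^k t^{\frac{ws}{2}h(sh-1)},
\]
and moving it to the left side yields \eqref{eq:HL-product}.

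Alternatively, the same identity follows by setting $q=0$ in \eqref{eq:complete-triangular-product}. I would only mention this as a consistency check, because extracting the Gaussian factors from the triangular product would require regrouping. The direct telescoping is cleaner.

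For \eqref{eq:HL-tuples}, I apply the Gaussian identity \cite[Proposition~1.7.3]{StanleyEC1}, already used in the proof of Theorem~\ref{thm:HL}, to each factor:
\[
\begin{bmatrix}n-sh+s\\ s\end{bmatrix}_t=\sum_{\nu\subseteq((n-sh)^s)}t^{\lvert\nu\rvert}.
\]
This is valid because $n-sh\ge0$ for $h\le k$. Expanding the product of these $k$ sums, and distributing over independent choices $\nu^{(h)}\subseteq((n-sh)^s)$, gives the sum over $k$-tuples with weight $t^{\lvert\nu^{(1)}\rvert+\cdots+\lvert\nu^{(k)}\rvert}$.

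No step is a real obstacle. The only care needed is the index bookkeeping: the rectangle $((n-sh)^s)$ must come from the Gaussian polynomial with top entry $n-sh+s$ and bottom entry $s$. The degenerate case $n=sh$, where the rectangle is empty, contributes the factor $1$, which agrees with the stated convention that a rectangle of width zero is the empty partition.
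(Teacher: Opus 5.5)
Your proposal is correct and follows the paper's proof: telescope via \eqref{eq:complete-telescoping}, apply \eqref{eq:HL-endpoint} with $k$ replaced by $h$ (admissible because $n\ge sk\ge sh$), and multiply over $h$. Then expand each Gaussian factor $\begin{bmatrix}n-sh+s\\ s\end{bmatrix}_t$ as the rank generating function for partitions in $((n-sh)^s)$.
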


\begin{proof}
Apply \eqref{eq:HL-endpoint} with $k$ replaced by $h$ and multiply for
$h=1,\ldots,k$. Equation~\eqref{eq:complete-telescoping} gives
\eqref{eq:HL-product}. The right-hand side is the product of the corresponding
rectangle generating functions. Its expansion is \eqref{eq:HL-tuples}.
\end{proof}

\subsection{Jack limit and the additive centered product}

Let $P_\lambda^{(\alpha)}$ denote the monic Jack polynomial defined by the
Macdonald limit $q=t^\alpha$ as $t\to1$~\cite[Chapter~VI, Section~10, Equation~(10.13)]{Macdonald}.
For $n\in\mathbb Z_{>0}$, write $\mathbf 1_n=(1,\ldots,1)$.

The ratio in Corollary~\ref{cor:arbitrary-gh} has the following Jack limit.

\begin{corollary}\label{cor:Jack-arbitrary-gh}
Let $k,w,s,n\in\mathbb Z_{>0}$ and $0\le g<h\le s$. Suppose that
$n\ge\ell\bigl(\delta_{k,h}^{(w,s)}\bigr)$. For an indeterminate $\alpha$,
\begin{equation}\label{eq:Jack-arbitrary-gh}
\frac{P_{\delta_{k,h}^{(w,s)}}^{(\alpha)}(\mathbf 1_n)}
     {P_{\delta_{k,g}^{(w,s)}}^{(\alpha)}(\mathbf 1_n)}
=
\prod_{r=0}^{k-1}\prod_{j=1}^{w}\prod_{v=0}^{h-g-1}
\frac{n-s(k-1)-h+1+sr+v+\alpha(j-1+wr)}
     {g+1+sr+v+\alpha(j-1+wr)}.
\end{equation}
\end{corollary}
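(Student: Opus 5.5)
The plan is to take the Jack limit directly in the finite ratio formula of Corollary~\ref{cor:arbitrary-gh}. By \cite[Chapter~VI, (10.13)]{Macdonald}, $P_\lambda(x;q,t)\to P^{(\alpha)}_\lambda(x)$ as $t\to1$ with $q=t^\alpha$. The principal specialization point $\mathbf t_n$ tends to $\mathbf 1_n$. Since $P_\lambda(x_1,\ldots,x_n;q,t)$ is a polynomial in $x$ whose coefficients are rational in $q,t$ and regular at the limit, $P_\lambda(\mathbf t_n;t^\alpha,t)\to P_\lambda^{(\alpha)}(\mathbf 1_n)$. Alternatively, one can bypass this justification. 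Substitute $q=t^\alpha$ into \eqref{eq:macdonald-evaluation} and let $t\to1$ factor by factor. This gives the standard Jack evaluation $\prod_{u}\frac{n-l'(u)+\alpha a'(u)}{l(u)+1+\alpha a(u)}$, which is nonzero for indeterminate $\alpha$. The limit of the ratio is therefore the ratio of the limits.

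Next I will expand the $t$-shifted factorials in \eqref{eq:arbitrary-gh}. For fixed $(r,j)$ these are
\[
\prod_{v=0}^{h-g-1}\frac{1-q^{j-1+wr}t^{n-s(k-1)-h+1+sr+v}}{1-q^{j-1+wr}t^{g+1+sr+v}}.
\]
I then set $q=t^\alpha$, so each factor becomes $1-t^{E}$. The exponent $E$ is $n-s(k-1)-h+1+sr+v+\alpha(j-1+wr)$ in the numerator and $g+1+sr+v+\alpha(j-1+wr)$ in the denominator. The monomial prefactor $t^{wk(h-g)\{\cdots\}/2}$ tends to $1$.

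The ratio contains the same number of factors in the numerator and the denominator. I will divide each factor by $1-t$ and use $\frac{1-t^E}{1-t}\to E$ as $t\to1$. This is valid for $E$ a nonzero linear polynomial in $\alpha$, read for example via $t=e^{-\epsilon}$ with real $\alpha$ in an open set and then extended polynomially. The result is exactly the product \eqref{eq:Jack-arbitrary-gh}.

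The only point needing care is that every denominator exponent $g+1+sr+v+\alpha(j-1+wr)$ is nonzero. This holds because its constant term is at least $1$, so no factor degenerates, and the limit is indeed the stated rational function of $\alpha$. The same positivity, $n-s(k-1)-h+1\ge1$ by admissibility, makes the numerator factors nonzero as well. I expect the main obstacle to be only this justification of interchanging the limit with the specialization. Using the explicit product formula on both sides avoids it entirely.
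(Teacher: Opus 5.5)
Your proposal is correct and follows the paper's own proof. Both arguments take the limit $q=t^{\alpha}$, $t\to1$ (via $t=e^{-\varepsilon}$ with $\alpha>0$) factor by factor in \eqref{eq:arbitrary-gh}, use admissibility to keep every linear factor nonzero, note that the monomial prefactor tends to $1$, and extend from real $\alpha>0$ to an indeterminate $\alpha$ by rationality. Your additional remark that $P_\lambda(\mathbf t_n;t^\alpha,t)\to P^{(\alpha)}_\lambda(\mathbf 1_n)$, so the limit of the ratio is the ratio of nonzero limits, makes explicit a step the paper leaves implicit.
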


\begin{proof}
For $\alpha>0$, set $q=e^{-\alpha\varepsilon}$ and
$t=e^{-\varepsilon}$ with $\varepsilon>0$. These substitutions satisfy
$q=t^\alpha$. As $\varepsilon\to0^+$, $\mathbf t_n\to\mathbf 1_n$. The
admissibility condition implies
\[
n-s(k-1)-h+1+sr+v\ge1,
\]
and
\[
g+1+sr+v+\alpha(j-1+wr)>0
\]
for the displayed index ranges. The factors in \eqref{eq:arbitrary-gh} converge
to those in \eqref{eq:Jack-arbitrary-gh}, and the prefactor in $t$ converges
to $1$. Hence \eqref{eq:Jack-arbitrary-gh} holds for $\alpha>0$. Since both sides
are rational functions of $\alpha$, the identity extends to an
indeterminate $\alpha$.
\end{proof}

For $0\le g<h\le s$ and $n=s(k-1)+g+h$, the numerator and denominator in
each factor of \eqref{eq:Jack-arbitrary-gh} coincide. Hence
\begin{equation}\label{eq:Jack-complement}
\frac{P_{\delta_{k,h}^{(w,s)}}^{(\alpha)}(\mathbf 1_n)}
     {P_{\delta_{k,g}^{(w,s)}}^{(\alpha)}(\mathbf 1_n)}=1.
\end{equation}
By Corollary~\ref{cor:pair-complement}, these two intermediate partitions
are rectangular complements in $((wk)^n)$. Equation~\eqref{eq:Jack-complement}
is also the Jack limit of \eqref{eq:principal-complement-intermediate}.

Taking $(g,h)=(0,s)$ in Corollary~\ref{cor:Jack-arbitrary-gh} and replacing
$v$ by $s-i$, the endpoint ratio is
\begin{equation}\label{eq:Jack-endpoint}
\frac{P_{\delta_{k+1}^{(w,s)}}^{(\alpha)}(\mathbf 1_n)}
     {P_{\delta_k^{(w,s)}}^{(\alpha)}(\mathbf 1_n)}
=
\prod_{r=0}^{k-1}\prod_{i=1}^{s}\prod_{j=1}^{w}
\frac{n-s(k-1)-i+1+sr+\alpha(j-1+wr)}
     {s-i+1+sr+\alpha(j-1+wr)}.
\end{equation}

The endpoint formula admits an additive centered form indexed by
$I_k$, $I_s$, and $I_w$. Define
\begin{equation}\label{eq:additive-N}
\widetilde N_k^{(w,s)}(x;\alpha)
=
\prod_{(r,i,j)\in I_k\times I_s\times I_w}
\bigl(x+\alpha(j+wr)+i+sr\bigr).
\end{equation}
\begin{proposition}\label{prop:additive-limit}
Let $k,w,s\in\mathbb Z_{>0}$, $\alpha>0$, and $x\in\mathbb C$.
\begin{equation}\label{eq:additive-limit}
\widetilde N_k^{(w,s)}(x;\alpha)
=
\lim_{\varepsilon\to0^+}
\frac{N_k^{(w,s)}(e^{-\varepsilon x};e^{-\alpha\varepsilon},e^{-\varepsilon})}
     {(1-e^{-\varepsilon})^{wsk}}.
\end{equation}
\end{proposition}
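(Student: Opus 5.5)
The plan is a factor-by-factor limit computation. $N_k^{(w,s)}$ is a finite product of exactly $|I_k\times I_s\times I_w|=wsk$ factors, and the denominator $(1-e^{-\varepsilon})^{wsk}$ has exactly that many copies of $1-e^{-\varepsilon}$. So I will split the quotient into $wsk$ individual quotients, one for each triple $(r,i,j)$, and take the limit of each separately.

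Fix $(r,i,j)\in I_k\times I_s\times I_w$ and set $c=\alpha(j+wr)+i+sr$. Substituting $z=e^{-\varepsilon x}$, $q=e^{-\alpha\varepsilon}$, $t=e^{-\varepsilon}$ into the factor $1-zq^{j+wr}t^{i+sr}$ gives
\[
1-e^{-\varepsilon x}e^{-\alpha\varepsilon(j+wr)}e^{-\varepsilon(i+sr)}=1-e^{-\varepsilon(x+c)}.
\]
The exponents $j+wr$ and $i+sr$ may be half-integers. Since $q$ and $t$ are specialized to positive reals with the real-exponential branch, $q^{j+wr}=e^{-\alpha\varepsilon(j+wr)}$ holds without ambiguity. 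This is the only point I would state explicitly: it interprets the elements of $\mathbb Z[q^{\pm\frac12},t^{\pm\frac12}]$ appearing in \eqref{eq:centered-N} compatibly.

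For each factor I will use the elementary limits
\[
\frac{1-e^{-\varepsilon y}}{\varepsilon}\to y,
\qquad
\frac{1-e^{-\varepsilon}}{\varepsilon}\to1,
\]
valid for any complex $y$, including $y=0$, since $1-e^{-\varepsilon y}=\varepsilon y+O(\varepsilon^2)$ by the power series. Hence
\[
\frac{1-e^{-\varepsilon(x+c)}}{1-e^{-\varepsilon}}\to x+c,
\]
which is the factor $x+\alpha(j+wr)+i+sr$ of \eqref{eq:additive-N}.

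Since there are finitely many factors, the limit of the product is the product of the limits, which is \eqref{eq:additive-limit}. I see no real obstacle. The only care needed is the exact bookkeeping: one factor of $1-e^{-\varepsilon}$ goes to each triple, and the case $x+c=0$ must be covered, which the power-series form of the limit already handles.
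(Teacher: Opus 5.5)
Your proposal is correct and is essentially the paper's proof: it also notes that the positive bases $e^{-\alpha\varepsilon}$ and $e^{-\varepsilon}$ make the half-integer powers unambiguous. It then shows that each of the $wsk$ factors $1-e^{-\varepsilon(x+\alpha(j+wr)+i+sr)}$, divided by one copy of $1-e^{-\varepsilon}$, tends to $x+\alpha(j+wr)+i+sr$, and multiplies these finitely many limits.
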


\begin{proof}
The positive bases $e^{-\alpha\varepsilon}$ and $e^{-\varepsilon}$ make the
Laurent powers in \eqref{eq:centered-N}, including powers whose exponents lie in
$\frac12\mathbb Z$, unambiguous. For each $(r,i,j)\in I_k\times I_s\times I_w$,
\[
e^{-\varepsilon x}
(e^{-\alpha\varepsilon})^{j+wr}(e^{-\varepsilon})^{i+sr}
=e^{-\varepsilon\{x+\alpha(j+wr)+i+sr\}}.
\]
The corresponding factor divided by $1-e^{-\varepsilon}$ tends to
$x+\alpha(j+wr)+i+sr$. The product of these factorwise limits is
\eqref{eq:additive-limit}.
\end{proof}

Thus the additive centered product is the normalized Jack limit of the
multiplicative centered product.

\begin{proposition}\label{prop:additive-centered}
Let $k,w,s,n\in\mathbb Z_{>0}$ with $n\ge sk$. For an indeterminate $\alpha$,
\begin{equation}\label{eq:additive-centered}
\frac{P_{\delta_{k+1}^{(w,s)}}^{(\alpha)}(\mathbf 1_n)}
     {P_{\delta_k^{(w,s)}}^{(\alpha)}(\mathbf 1_n)}
=
\frac{
\widetilde N_k^{(w,s)}\left(
 n+\frac{\alpha(wk-1)-(sk-1)}{2};\alpha\right)}
{
\widetilde N_k^{(w,s)}\left(
 sk+\frac{\alpha(wk-1)-(sk-1)}{2};\alpha\right)}.
\end{equation}
\end{proposition}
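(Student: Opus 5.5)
The plan is to start from the Jack endpoint formula \eqref{eq:Jack-endpoint} and rewrite each of its two products in centered form. For $m\in\mathbb Z$ set
\[
x_m=m+\frac{\alpha(wk-1)-(sk-1)}{2}.
\]
I will show directly that $\widetilde N_k^{(w,s)}(x_m;\alpha)$ equals
\[
\prod_{r=0}^{k-1}\prod_{i=1}^{s}\prod_{j=1}^{w}\bigl(m-s(k-1)-i+1+sr+\alpha(j-1+wr)\bigr).
\]
With $m=n$ this is the numerator of \eqref{eq:Jack-endpoint}. With $m=sk$ the linear form becomes $s-i+1+sr+\alpha(j-1+wr)$, which is the denominator. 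Once this is established, \eqref{eq:additive-centered} follows immediately.

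The key step is the reindexing used in the proof of Proposition~\ref{prop:centered-endpoint}. Parametrize $I_k\times I_s\times I_w$ by
\[
r-\frac{k-1}{2},\qquad \frac{s+1}{2}-i,\qquad j-\frac{w+1}{2},
\]
with $0\le r\le k-1$, $1\le i\le s$, and $1\le j\le w$. Substituting into the linear form $x+\alpha(j+wr)+i+sr$ of \eqref{eq:additive-N} gives:
\begin{itemize}
\item an $\alpha$-part equal to $\alpha\bigl(j-1+wr-\frac{wk-1}{2}\bigr)$;
\item a constant part equal to $s(k-1)$ subtracted from $sr-i+1+\frac{sk-1}{2}$, that is, $sr-i+1-s(k-1)+\frac{sk-1}{2}$.
\end{itemize}
Adding $x_m$ cancels both half-integer offsets and leaves $m-s(k-1)-i+1+sr+\alpha(j-1+wr)$, as required. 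The only real work is checking this arithmetic of offsets, in particular the $s(k-1)$ shift coming from $\frac{s+1}{2}-i$ together with $s\bigl(r-\frac{k-1}{2}\bigr)$. I expect this bookkeeping to be the main obstacle, though it is routine.

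As an alternative or consistency check, I could take the limit of Proposition~\ref{prop:centered-endpoint} using Proposition~\ref{prop:additive-limit}. Under $q=e^{-\alpha\varepsilon}$ and $t=e^{-\varepsilon}$, the point $z_m$ becomes $e^{-\varepsilon x_m}$. The normalizing factors $(1-e^{-\varepsilon})^{wsk}$ cancel in the quotient, and the prefactor $t^{wsk(sk-1)/2}$ tends to $1$.

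This limit argument proves the identity for $\alpha>0$. The direct route gives it as an identity of rational functions in $\alpha$, valid for an indeterminate $\alpha$, once the denominator is seen to be nonzero: its factors $s-i+1+sr+\alpha(j-1+wr)$ have positive constant term, so none of them vanishes. If the limit route is used instead, this same observation, together with both sides being rational in $\alpha$, extends the identity from $\alpha>0$ to an indeterminate $\alpha$, as in Corollary~\ref{cor:Jack-arbitrary-gh}.
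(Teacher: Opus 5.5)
Your proposal is correct and follows essentially the same route as the paper: reparametrize $I_k\times I_s\times I_w$ by $r-\frac{k-1}{2}$, $\frac{s+1}{2}-i$, $j-\frac{w+1}{2}$, show $\widetilde N_k^{(w,s)}(x_m;\alpha)=\prod_{r,i,j}\bigl(m-s(k-1)-i+1+sr+\alpha(j-1+wr)\bigr)$, and read off the numerator ($m=n$) and denominator ($m=sk$) of \eqref{eq:Jack-endpoint}. Your offset arithmetic checks out. The limit-based consistency check is an optional extra that the paper does not need, since \eqref{eq:Jack-endpoint} already holds for indeterminate $\alpha$.
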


\begin{proof}
Write the centered indices in \eqref{eq:additive-N} as
\[
r-\frac{k-1}{2},\qquad \frac{s+1}{2}-i,\qquad j-\frac{w+1}{2}.
\]
For $m\in\mathbb Z$, \eqref{eq:additive-N} becomes
\[
\widetilde N_k^{(w,s)}\left(
 m+\frac{\alpha(wk-1)-(sk-1)}{2};\alpha\right)
=
\prod_{r=0}^{k-1}\prod_{i=1}^{s}\prod_{j=1}^{w}
\bigl(m-s(k-1)-i+1+sr+\alpha(j-1+wr)\bigr).
\]
At $m=n$ and $m=sk$, the product gives the numerator and denominator in
\eqref{eq:Jack-endpoint}, respectively. Hence \eqref{eq:additive-centered}
holds.
\end{proof}

\begin{proposition}\label{prop:additive-symmetries}
Let $k,w,s\in\mathbb Z_{>0}$. The following identities hold in
$\mathbb Q(\alpha)[x]$:
\begin{align}
\widetilde N_k^{(w,s)}(-x;\alpha)
&=(-1)^{wsk}\widetilde N_k^{(w,s)}(x;\alpha),
\label{eq:additive-parity}\\
\widetilde N_k^{(w,s)}(x;\alpha)
&=\alpha^{wsk}\widetilde N_k^{(s,w)}(\alpha^{-1}x;\alpha^{-1}).
\label{eq:additive-exchange}
\end{align}
\end{proposition}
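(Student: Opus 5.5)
Both identities will be proved directly from the definition \eqref{eq:additive-N}, in parallel with the proof of Proposition~\ref{prop:centered-symmetries}. The additive product is a polynomial in $x$ with coefficients in $\mathbb Q(\alpha)$, so it suffices to match the two sides factor by factor after a suitable bijection of the index set $I_k\times I_s\times I_w$.

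For the parity identity \eqref{eq:additive-parity}, use the involution $(r,i,j)\mapsto(-r,-i,-j)$. It preserves $I_k\times I_s\times I_w$, because each centered set $I_m$ is symmetric about $0$. Substituting $-x$, each factor becomes
\[
-x+\alpha(j+wr)+i+sr=-\bigl(x+\alpha(-j-wr)+(-i)+s(-r)\bigr).
\]
Pulling out the sign from each of the $wsk$ factors and reindexing by the involution gives $(-1)^{wsk}\widetilde N_k^{(w,s)}(x;\alpha)$. This is the additive analogue of the reciprocity identity \eqref{eq:centered-reciprocity}; the analogue of \eqref{eq:centered-inversion} is absorbed into the same step.

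For the exchange identity \eqref{eq:additive-exchange}, expand the right-hand side using \eqref{eq:additive-N} with $(w,s)$ replaced by $(s,w)$. The index set is then $I_k\times I_w\times I_s$, and a typical factor, indexed by $(r,i',j')$ with $i'\in I_w$ and $j'\in I_s$, is
\[
\alpha^{-1}x+\alpha^{-1}(j'+sr)+i'+wr.
\]
Multiplying each of the $wsk$ factors by $\alpha$ uses up the prefactor $\alpha^{wsk}$ and turns the factor into
\[
x+(j'+sr)+\alpha(i'+wr).
\]
Now apply the bijection $(r,i',j')\mapsto(r,j',i')$ onto $I_k\times I_s\times I_w$, setting $i=j'\in I_s$ and $j=i'\in I_w$. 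Each factor becomes $x+\alpha(j+wr)+i+sr$, which is exactly a factor of $\widetilde N_k^{(w,s)}(x;\alpha)$. This mirrors \eqref{eq:centered-exchange}.

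The only point needing care is bookkeeping in the exchange step. One must confirm that swapping the roles of $i$ and $j$ also swaps the step parameters attached to $r$: $w$ goes with $\alpha$ and $s$ goes with the unweighted part. With both sides written out explicitly as above, this is immediate. As an optional cross-check, the identities can also be obtained as the normalized limits in Proposition~\ref{prop:additive-limit} of \eqref{eq:centered-reciprocity}--\eqref{eq:centered-inversion} and \eqref{eq:centered-exchange}, but the direct factor matching is simpler and I would present that.
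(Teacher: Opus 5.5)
Your proposal is correct and takes the same route as the paper. The paper also proves parity with the involution $(r,i,j)\mapsto(-r,-i,-j)$ on $I_k\times I_s\times I_w$, and proves exchange with the bijection $(r,i,j)\mapsto(r,j,i)$, the interchange $s\leftrightarrow w$, and extraction of $\alpha$ from each of the $wsk$ linear factors; your explicit factor computations for both steps check out.
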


\begin{proof}
The involution $(r,i,j)\mapsto(-r,-i,-j)$ permutes
$I_k\times I_s\times I_w$ and pairs each factor at $-x$ with the negative of
the corresponding factor at $x$. Hence \eqref{eq:additive-parity} holds.
For \eqref{eq:additive-exchange}, apply the bijection
$(r,i,j)\mapsto(r,j,i)$, interchange $s$ and $w$, and extract $\alpha$ from
each of the $wsk$ linear factors.
\end{proof}

The parameter inversion $\alpha\leftrightarrow\alpha^{-1}$ in
\eqref{eq:additive-exchange} parallels the standard Jack conjugation symmetry
in Stanley~\cite[Theorem 3.3]{StanleyJack}.

\subsection{Schur specializations, hook products, and tableaux}

At $\alpha=1$, the monic Jack polynomial is the Schur polynomial. Thus
Corollary~\ref{cor:Jack-arbitrary-gh} specializes to ratios between arbitrary
positions in the horizontal sequence.

\begin{corollary}\label{cor:Schur-arbitrary-gh}
Let $k,w,s,n\in\mathbb Z_{>0}$ and $0\le g<h\le s$. Suppose that
$n\ge\ell\bigl(\delta_{k,h}^{(w,s)}\bigr)$. The Schur values satisfy
\begin{equation}\label{eq:Schur-arbitrary-gh}
\frac{s_{\delta_{k,h}^{(w,s)}}(\mathbf 1_n)}
     {s_{\delta_{k,g}^{(w,s)}}(\mathbf 1_n)}
=
\prod_{r=0}^{k-1}\prod_{j=1}^{w}\prod_{v=0}^{h-g-1}
\frac{n-s(k-1)-h+sr+v+j+wr}
     {g+sr+v+j+wr}.
\end{equation}
\end{corollary}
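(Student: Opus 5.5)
The plan is to specialize Corollary~\ref{cor:Jack-arbitrary-gh} at $\alpha=1$. As recalled just before the statement, the monic Jack polynomial at $\alpha=1$ is the Schur polynomial, so $P_\lambda^{(1)}=s_\lambda$. Hence the left-hand side of \eqref{eq:Jack-arbitrary-gh} at $\alpha=1$ is exactly the left-hand side of \eqref{eq:Schur-arbitrary-gh}. The hypotheses $0\le g<h\le s$ and $n\ge\ell(\delta_{k,h}^{(w,s)})$ are the same in both statements, so no further admissibility check is needed.

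The first step is to justify that the specialization $\alpha=1$ is legitimate. Corollary~\ref{cor:Jack-arbitrary-gh} is an identity of rational functions in an indeterminate $\alpha$. Every denominator factor has the form $g+1+sr+v+\alpha(j-1+wr)$, and at $\alpha=1$ each such factor is a positive integer. Setting $\alpha=1$ therefore evaluates the right-hand side without poles. On the left, $s_\lambda(\mathbf 1_n)\neq 0$ for $\ell(\lambda)\le n$, so the quotient is also well defined.

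The remaining step is the simplification of each factor at $\alpha=1$:
\[
n-s(k-1)-h+1+sr+v+(j-1+wr)=n-s(k-1)-h+sr+v+j+wr,
\]
\[
g+1+sr+v+(j-1+wr)=g+sr+v+j+wr.
\]
These are exactly the numerator and denominator in \eqref{eq:Schur-arbitrary-gh}, which completes the argument.

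There is no real obstacle here. The only point requiring care is the identification $P^{(1)}_\lambda=s_\lambda$ for the monic normalization. If one prefers to avoid it, an alternative is to set $q=t$ in \eqref{eq:arbitrary-gh}, where $P_\lambda(x;t,t)=s_\lambda$, and then let $t\to1$. In that form every factor $(1-t^{a})/(1-t^{b})$ tends to $a/b$, and this yields the same product.
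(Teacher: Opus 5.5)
Your proposal is correct and follows the paper's own proof, which is simply to set $\alpha=1$ in \eqref{eq:Jack-arbitrary-gh}. Your factor simplifications are right, and your extra remarks on regularity at $\alpha=1$ and the $q=t$, $t\to1$ alternative are sound but not needed.
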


\begin{proof}
Set $\alpha=1$ in \eqref{eq:Jack-arbitrary-gh}.
\end{proof}

A semistandard Young tableau has weakly increasing rows and strictly increasing
columns. The value $s_\lambda(\mathbf 1_n)$ counts semistandard Young tableaux
of shape $\lambda$ with entries in $\{1,2,\ldots,n\}$. Hence
\eqref{eq:Schur-arbitrary-gh} is a ratio of such tableau counts. Under the
rectangular complement condition
\[
n=s(k-1)+g+h,
\]
Proposition~\ref{prop:intermediate-complement} identifies the two shapes as rectangular complements, and each factor in \eqref{eq:Schur-arbitrary-gh} is $1$. Therefore
\begin{equation*}
s_{\delta_{k,h}^{(w,s)}}(\mathbf 1_n)
=
s_{\delta_{k,g}^{(w,s)}}(\mathbf 1_n).
\end{equation*}

At $q=t$, the monic Macdonald polynomial specializes to the Schur polynomial.

\begin{corollary}\label{cor:Schur-principal}
Let $k,w,s,n\in\mathbb Z_{>0}$ with $n\ge sk$. The endpoint principal
specialization ratio is
\begin{equation*}
\frac{s_{\delta_{k+1}^{(w,s)}}(1,t,\ldots,t^{n-1})}
     {s_{\delta_k^{(w,s)}}(1,t,\ldots,t^{n-1})}
=t^{\frac{wsk(sk-1)}{2}}
\prod_{r=0}^{k-1}\prod_{i=1}^{s}\prod_{j=1}^{w}
\frac{1-t^{n-s(k-1)-i+j+(w+s)r}}
     {1-t^{s-i+j+(w+s)r}}.
\end{equation*}
\end{corollary}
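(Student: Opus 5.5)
The plan is to specialize Corollary~\ref{cor:consecutive} at $q=t$. At $q=t$ the monic Macdonald polynomial $P_\lambda(x;t,t)$ equals the Schur polynomial $s_\lambda(x)$. The hypothesis $n\ge sk$ is exactly the admissibility condition used there, so \eqref{eq:endpoint-ratio} holds as an identity of rational functions in independent $q,t$. Before setting $q=t$, I would check that this substitution is legitimate: every denominator factor in \eqref{eq:endpoint-ratio} has the form $1-q^{a}t^{b}$ with $a\ge0$ and $b\ge1$. After $q=t$ it becomes $1-t^{a+b}$ with $a+b\ge1$, so no denominator vanishes.

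Next I would expand the $(q^wt^s)$-shifted factorials in \eqref{eq:endpoint-ratio} over $r=0,\ldots,k-1$. The numerator factor indexed by $(r,i,j)$ is
\[
1-q^{j-1+wr}t^{n-s(k-1)-i+1+sr},
\]
and at $q=t$ its exponent of $t$ is $n-s(k-1)-i+j+(w+s)r$. The denominator factor $1-q^{j-1+wr}t^{s-i+1+sr}$ becomes $1-t^{s-i+j+(w+s)r}$. The monomial prefactor $t^{\frac{wsk(sk-1)}2}$ does not involve $q$ and is unchanged. Collecting the factors over $r,i,j$ gives exactly the stated product.

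Equivalently, one can substitute $q=t$ in the centered form of Proposition~\ref{prop:centered-endpoint}. I would use the direct route through Corollary~\ref{cor:consecutive}, because it gives the stated indexing with no reindexing.

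There is no real obstacle. The only point that needs care is to confirm that the specialization $q=t$ commutes with taking the ratio. This holds because both principal specializations are polynomials in $t$ obtained from \eqref{eq:macdonald-evaluation}, and the factors cancelled in deriving Corollary~\ref{cor:consecutive} specialize to nonzero quantities at $q=t$: each has the form $1-t^{c}$ with $c\ge1$.
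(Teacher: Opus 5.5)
Your proposal is correct and follows the paper's proof, which sets $q=t$ in Corollary~\ref{cor:consecutive}, expands the $(q^wt^s)$-shifted factorials over $r$, and combines the powers of $t$. Your check that every factor has the form $1-t^{c}$ with $c\ge1$ after the substitution, so that specializing the ratio is legitimate, is a valid justification that the paper leaves implicit.
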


\begin{proof}
Set $q=t$ in Corollary~\ref{cor:consecutive} and combine the powers of $t$.
\end{proof}

Taking $(g,h)=(0,s)$ in Corollary~\ref{cor:Schur-arbitrary-gh} and replacing
$v$ by $s-i$, \eqref{eq:Schur-arbitrary-gh} becomes
\begin{equation}\label{eq:Schur-endpoint}
\frac{s_{\delta_{k+1}^{(w,s)}}(\mathbf 1_n)}
     {s_{\delta_k^{(w,s)}}(\mathbf 1_n)}
=
\prod_{r=0}^{k-1}\prod_{i=1}^{s}\prod_{j=1}^{w}
\frac{n-s(k-1)-i+j+(w+s)r}
     {s-i+j+(w+s)r}.
\end{equation}
The factors in \eqref{eq:Schur-endpoint} depend on $(i,j)$ through the content
$j-i$. With $u$ running through the boxes of $(w^s)$, this becomes
\begin{equation*}
\frac{s_{\delta_{k+1}^{(w,s)}}(\mathbf 1_n)}
     {s_{\delta_k^{(w,s)}}(\mathbf 1_n)}
=
\prod_{r=0}^{k-1}\prod_{u\in(w^s)}
\frac{n-s(k-1)+c(u)+(w+s)r}
     {s+c(u)+(w+s)r}.
\end{equation*}
The contents of $(w^s)$ range from $1-s$ to $w-1$. Thus, for each $r$, the
factors are determined by the $s+w-1$ possible content values. For $w,s\ge2$,
the dependence on $i$ and $j$
in \eqref{eq:Jack-endpoint} is through $-i+\alpha(j-1)$, which is not determined
by $j-i$ for generic $\alpha$. At $\alpha=1$, this expression reduces to
$j-i-1$.

The content symmetry of the added blocks gives a second identity for the endpoint Schur ratio.

\begin{corollary}\label{cor:Schur-width-height}
Let $k,w,s,n\in\mathbb Z_{>0}$ with $n\ge sk$. The endpoint ratios satisfy
\begin{equation}\label{eq:Schur-width-height}
\frac{s_{\delta_{k+1}^{(w,s)}}(\mathbf 1_n)}
     {s_{\delta_k^{(w,s)}}(\mathbf 1_n)}
=
\frac{s_{\delta_{k+1}^{(s,w)}}(\mathbf 1_{n+k(w-s)})}
     {s_{\delta_k^{(s,w)}}(\mathbf 1_{n+k(w-s)})}.
\end{equation}
\end{corollary}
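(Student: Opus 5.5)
We will prove \eqref{eq:Schur-width-height} directly from the content form of \eqref{eq:Schur-endpoint}, applied once with parameters $(w,s,n)$ and once with $(s,w,n')$, where $n'=n+k(w-s)$. First we check admissibility. The right-hand side of \eqref{eq:Schur-width-height} needs $n'\ge \ell(\delta_{k+1}^{(s,w)})=wk$, and this is equivalent to $n\ge sk$. So both instances of \eqref{eq:Schur-endpoint} are available.

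For the parameters $(w,s,n)$, the factor indexed by $r$ and by $u\in(w^s)$ is
\[
\frac{n-s(k-1)+c(u)+(w+s)r}{s+c(u)+(w+s)r}.
\]
For the parameters $(s,w,n')$, the boxes run over $(s^w)$, which is the conjugate of $(w^s)$, and the factor is
\[
\frac{n'-w(k-1)+c(u)+(w+s)r}{w+c(u)+(w+s)r}.
\]
Conjugation $u\mapsto u'$ is a bijection $(w^s)\to(s^w)$ with $c(u')=-c(u)$. The common shift $(w+s)r$ is symmetric in $w$ and $s$, so for each $r$ we can compare the two products through the multiset of contents. That multiset is $\{c(u):u\in(w^s)\}$, with values in $[1-s,w-1]$.

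The key step is to show that the map $c\mapsto w-s-c$ permutes this content multiset. For this we use the $180^\circ$ rotation $(i,j)\mapsto(s+1-i,\,w+1-j)$ of the rectangle, which sends the content $j-i$ to $(w-s)-(j-i)$. Composing the rotation with conjugation yields a bijection $\phi\colon(w^s)\to(s^w)$ with $c(\phi(u))=w-s-c(u)$. We then rewrite the $(s,w,n')$ product over $u'=\phi(u)$. Its denominator becomes $w+(w-s-c(u))+(w+s)r$.

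This does not match $s+c(u)+(w+s)r$ factorwise, so we instead compare numerator with numerator and denominator with denominator directly. The correct pairing is the identity bijection on contents, namely conjugation followed by the shift coming from $n'$. On the right we use the parametrization $(i,j)\mapsto(j,i)$ with the explicit form $n'-w(k-1)-j+i+(w+s)r$ for the numerator and $w-j+i+(w+s)r$ for the denominator. We then substitute $n'=n+k(w-s)$, which gives the numerator
\[
n-s(k-1)+(w-s)-j+i+(w+s)r .
\]
Now we apply the rotation $i\mapsto s+1-i$, $j\mapsto w+1-j$ to the right-hand index set. The numerator becomes $n-s(k-1)+j-i+(w+s)r$, which is exactly the left numerator $n-s(k-1)+c(u)+(w+s)r$. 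The denominator becomes $w-(w+1-j)+(s+1-i)+(w+s)r=s+j-i+(w+s)r$, which is exactly the left denominator.

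We then take the product over $r$ and over the boxes. The main obstacle is bookkeeping: we must choose the composite bijection (swap $i\leftrightarrow j$, then rotate) so that the shift $k(w-s)$ is absorbed exactly. We expect the constant $(w-s)$ from the rotation to combine with $-w(k-1)+k(w-s)=-s(k-1)+(w-s)$, and we will check this arithmetic carefully.
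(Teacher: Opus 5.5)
Your final coordinate computation is correct, but one intermediate step has a sign error. Composing the rotation $(i,j)\mapsto(s+1-i,w+1-j)$ with conjugation gives $\phi(i,j)=(w+1-j,\,s+1-i)$. Rotation sends $c\mapsto(w-s)-c$ and conjugation then negates, so $c(\phi(u))=c(u)-(w-s)$, not $w-s-c(u)$. With the correct sign the content-level comparison already matches factor by factor. The denominator becomes $w+c(u)-(w-s)+(w+s)r=s+c(u)+(w+s)r$, and the numerator becomes $n+k(w-s)-w(k-1)+c(u)-(w-s)+(w+s)r=n-s(k-1)+c(u)+(w+s)r$. So your remark that this ``does not match factorwise'' is false and should be deleted. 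Your later computation (swap $i\leftrightarrow j$, then rotate) is exactly the bijection $\phi$, and it checks out. The admissibility equivalence $n+k(w-s)\ge wk\iff n\ge sk$ is also right.

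The paper reaches the identity by a different route and does not use the explicit product \eqref{eq:Schur-endpoint}. Put $\lambda=\delta_{k+1}^{(w,s)}$ and $\mu=\delta_k^{(w,s)}$. By the hook--content formula, the ratio is $\prod_{u\in\lambda/\mu}(n+c(u))$ times $H_\mu/H_\lambda$. The paper rotates the whole skew diagram $\lambda/\mu$ by $180^\circ$ inside $((wk)^{sk})$; this sends $c\mapsto k(w-s)-c$ and permutes the $k$ blocks by $r\mapsto k-1-r$. Combined with conjugation, this turns the content factors for $n+k(w-s)$ into those for $n$. The denominators are handled at once, since hook lengths are invariant under conjugation. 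There the shift $k(w-s)$ appears as the global rotation centre, and no matching of denominators is needed.

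You instead rotate within each $(w^s)$ block, so the shift splits as $k(w-s)=(w-s)+(w-s)(k-1)$: the local block shift plus the change from $w(k-1)$ to $s(k-1)$. You have to match numerators and denominators separately. In exchange, the verification is entirely mechanical given \eqref{eq:Schur-endpoint}, and it gives the identity separately for each $r$ rather than only for the full product.
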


\begin{proof}
The length formula in \eqref{eq:stair-size-length} and $n\ge sk$ imply
\[
n+k(w-s)\ge wk=\ell\bigl(\delta_{k+1}^{(s,w)}\bigr),
\]
hence the right-hand side is defined. Put
$\lambda=\delta_{k+1}^{(w,s)}$ and $\mu=\delta_k^{(w,s)}$.
By the hook--content formula, the endpoint ratio is the product of
$n+c(u)$ over $u\in\lambda/\mu$, multiplied by the quotient of the hook
products for $\mu$ and $\lambda$. The skew diagram $\lambda/\mu$ is invariant
under $180^\circ$ rotation inside $((wk)^{sk})$. Under this rotation, the
content changes by
\[
c\longmapsto k(w-s)-c.
\]
Conjugation sends $(\lambda,\mu)$ to
$(\delta_{k+1}^{(s,w)},\delta_k^{(s,w)})$, negates contents, and preserves hook
lengths. The content symmetry transforms the content factors for
$n+k(w-s)$ into those for $n$, and the hook quotient is unchanged. This proves
\eqref{eq:Schur-width-height}.
\end{proof}

For a partition $\lambda$, let $H_\lambda$ denote its hook product and
$f^\lambda$ the number of standard Young tableaux of shape $\lambda$. Thus
\begin{equation}\label{eq:hook-product}
H_\lambda=\prod_{u\in\lambda}
\bigl(a_\lambda(u)+l_\lambda(u)+1\bigr),
\quad
f^\lambda=\frac{\lvert\lambda\rvert!}{H_\lambda}.
\end{equation}
The second identity is the hook-length formula of Frame, Robinson, and
Thrall~\cite[Theorem~1]{FRT}.

\begin{proposition}\label{prop:hook-ratios}
Let $k,w,s\in\mathbb Z_{>0}$. The hook products satisfy
\begin{equation}\label{eq:hook-ratio}
\frac{H_{\delta_{k+1}^{(w,s)}}}{H_{\delta_k^{(w,s)}}}
=
\prod_{r=0}^{k-1}\prod_{i=1}^{s}\prod_{j=1}^{w}
\bigl(s-i+j+(w+s)r\bigr).
\end{equation}
The corresponding standard tableau counts satisfy
\begin{equation}\label{eq:SYT-ratio}
\frac{f^{\delta_{k+1}^{(w,s)}}}{f^{\delta_k^{(w,s)}}}
=\frac{\left(\frac{ws\,k(k+1)}{2}\right)!}
       {\left(\frac{ws\,k(k-1)}{2}\right)!}
\prod_{r=0}^{k-1}\prod_{i=1}^{s}\prod_{j=1}^{w}
\frac{1}{s-i+j+(w+s)r}.
\end{equation}
\end{proposition}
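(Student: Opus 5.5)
Two routes suggest themselves: a direct box-by-box comparison of hook lengths, using the shift mentioned in the introduction, or extraction of the hook quotient from Schur data that is already available. I would take the second as the main argument and keep the first as a check.

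\emph{Extracting the hook ratio.} By the hook--content formula,
\[
s_\lambda(\mathbf 1_n)=\prod_{u\in\lambda}\frac{n+c(u)}{a_\lambda(u)+l_\lambda(u)+1}.
\]
Put $\lambda=\delta_{k+1}^{(w,s)}$ and $\mu=\delta_k^{(w,s)}$. Since $c(u)$ depends only on the position of $u$, the content factors of common boxes cancel. This gives
\[
\frac{s_\lambda(\mathbf 1_n)}{s_\mu(\mathbf 1_n)}
=\frac{H_\mu}{H_\lambda}\prod_{u\in\lambda/\mu}(n+c(u)),
\]
valid for every $n\ge sk$. The boxes of $\lambda/\mu$ are the $k$ blocks of shape $(w^s)$. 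Index them by $r$, so that block $r$ occupies rows $s(k-1-r)+1,\ldots,s(k-r)$ and columns $wr+1,\ldots,wr+w$. The box with local coordinates $(i',j)$ in block $r$ then has content $wr+j-s(k-1-r)-i'$.

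Reindex the local row by $i=s+1-i'$, so that $-i'=i-s-1$. The content becomes
\[
c(u)=(w+s)r+j+i-s(k-1)-s-1.
\]
It should therefore match the numerator factor $n-s(k-1)-i+1+j+(w+s)r$ of \eqref{eq:Schur-endpoint} after reflecting $i$. Both products run over all $(r,i,j)$, so the two multisets coincide. Comparing with \eqref{eq:Schur-endpoint}, the $n$-dependent parts cancel exactly. What remains is $H_\lambda/H_\mu$ equal to the stated denominator product. The identity is of polynomials in $n$ holding for infinitely many $n$, and it could be checked directly rather than by matching.

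\emph{Checking the identification.} The main obstacle is this content-multiset identification: the numerator of \eqref{eq:Schur-endpoint} must be literally $\prod_{u\in\lambda/\mu}(n+c(u))$ with the correct $i$-orientation. I would verify it carefully with the explicit coordinates above, and confirm it against Example~\ref{ex:centered-endpoint}.

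As an independent check, the direct route would go as follows. Shifting $\mu$ down by $s$ rows and right by $w$ columns embeds it in $\lambda$ while preserving hook lengths; this is the shift noted in the introduction. The boxes of $\lambda$ not covered by the shifted copy then consist of the top $s$ rows and the first $w$ columns. Computing their hooks gives the product
\[
\prod_{r,i,j}\bigl(s-i+j+(w+s)r\bigr),
\]
but this route involves more bookkeeping.

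\emph{Tableau counts.} Finally, \eqref{eq:SYT-ratio} follows from $f^\lambda=|\lambda|!/H_\lambda$ in \eqref{eq:hook-product}, together with the sizes in \eqref{eq:stair-size-length}: $|\delta_{k+1}^{(w,s)}|=wsk(k+1)/2$ and $|\delta_k^{(w,s)}|=wsk(k-1)/2$.
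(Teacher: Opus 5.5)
Your main argument is correct, but it takes a different route from the paper. The paper argues directly. Rows $s+1,\ldots,sk$ of $\lambda=\delta_{k+1}^{(w,s)}$ are exactly $\mu=\delta_k^{(w,s)}$, and every column of $\lambda$ is $s$ longer than the same column of $\mu$. Hence $(i,j)\mapsto(i+s,j)$ preserves arms and legs, and $H_\lambda/H_\mu$ is the product of the hooks in the top $s$ rows, which equal $s-i+j+(w+s)r$.

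You instead read $H_\lambda/H_\mu$ off \eqref{eq:Schur-endpoint} using the hook--content formula. This is not circular, since \eqref{eq:Schur-endpoint} was obtained earlier from Macdonald's evaluation formula through the Jack limit. Your block coordinates and contents for $\lambda/\mu$ are correct. Every content of $\lambda$ is at least $1-sk$, so the factors $n+c(u)$ are positive for $n\ge sk$, and a single such $n$ suffices for the cancellation. Your route yields the paper's subsequent remark, that the denominator of \eqref{eq:Schur-endpoint} is the hook ratio, for free. Unwound, those denominators are the first-row hooks collected along the horizontal sequence, and these become the top $s$ rows of $\lambda$. The cost is that an elementary combinatorial fact is routed through the whole Macdonald--Jack--Schur chain, whereas the paper's shift argument is self-contained.

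There is one slip in your key step. The numerator of \eqref{eq:Schur-endpoint} is $n-s(k-1)-i+j+(w+s)r$, with no $+1$; the $+1$ belongs to \eqref{eq:Jack-endpoint}, where it is offset by $\alpha(j-1+wr)$. With your $+1$, the reflected factor would be $n+c(u)+1$, so the identification you single out as the crux would fail by one. With the correct numerator, $i\mapsto s+1-i$ gives exactly $n+c(u)$.

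Your ``independent check'' is wrong as stated. For $k\ge2$, shifting $\mu$ down by $s$ rows and right by $w$ columns does not embed $\mu$ in $\lambda$. The box $(i,\mu_i)$ with $i\le s(k-1)$ goes to $(i+s,\mu_i+w)$, but $\lambda_{i+s}=\mu_i$. Counting confirms this: the top $s$ rows and the first $w$ columns of $\lambda$ together contain $2wsk-ws$ boxes, which exceeds $|\lambda|-|\mu|=wsk$, so their hook product would have too many factors.

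Either shift alone works. Shifting down by $s$ leaves the top $s$ rows, which is the paper's proof. Shifting right by $w$ leaves the first $w$ columns, because $\lambda=\mu+(w^{sk})$ after padding $\mu$ with zeros; after reindexing $i\mapsto s+1-i$ and $j\mapsto w+1-j$, those hooks give the same product \eqref{eq:hook-ratio}.
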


\begin{proof}
The map
\[
(i,j)\longmapsto(i+s,j)
\]
is a bijection from the boxes of $\delta_k^{(w,s)}$ to the boxes of
$\delta_{k+1}^{(w,s)}$ below its top $s$ rows. The row length is unchanged
under this map. The row index and the corresponding column length both increase
by $s$. Hence the arm and leg lengths are preserved. The quotient of hook
products is therefore the product of the hook lengths in the top $s$ rows of
$\delta_{k+1}^{(w,s)}$.

These boxes are indexed by
\[
\bigl(i,\,w(k-r)-j+1\bigr),
\quad 0\le r\le k-1,\quad 1\le i\le s,\quad 1\le j\le w.
\]
For such a box, the arm length is $wr+j-1$ and the leg length is
$s(r+1)-i$. Its hook length is
\[
s-i+j+(w+s)r.
\]
Multiplication over $r,i,j$ gives \eqref{eq:hook-ratio}. The second identity
in \eqref{eq:hook-product}, together with \eqref{eq:stair-size-length}, implies
\eqref{eq:SYT-ratio}.
\end{proof}

The denominator in \eqref{eq:Schur-endpoint} is the hook product ratio in
\eqref{eq:hook-ratio}. In the ordinary staircase case $w=s=1$, with $n\ge k$,
\begin{equation}\label{eq:ordinary-Schur-ratio}
\frac{s_{\delta_{k+1}}(\mathbf 1_n)}{s_{\delta_k}(\mathbf 1_n)}
=
\prod_{r=0}^{k-1}\frac{n-k+1+2r}{1+2r},
\end{equation}
and
\begin{equation}\label{eq:ordinary-hook-ratio}
\frac{H_{\delta_{k+1}}}{H_{\delta_k}}=(2k-1)!!.
\end{equation}
Equation~\eqref{eq:ordinary-hook-ratio} is equivalent to
\cite[Lemma~3.1(1)]{ShimazakiJacobi}. Equation~\eqref{eq:ordinary-Schur-ratio}
is the hook--content expression underlying the Jacobi polynomial evaluations
in~\cite[Theorem~3.1]{ShimazakiJacobi}.

\section{Conclusion}

The intermediate partitions $\delta_{k,g}^{(w,s)}$ form the unique shortest
sequence between consecutive generalized staircases whose successive differences
are horizontal strips. Their conjugates form the corresponding unique shortest
sequence whose successive differences are vertical strips. Macdonald's finite
principal specialization yields explicit ratios along both sequences.

For nested partitions, Lemma~\ref{lem:general-nonnegativity} shows that
coefficientwise nonnegativity after removal of the monomial factor forces
independence of $q$. For the horizontal sequence with $wk\ge2$, the coefficient
of $q$ vanishes only for $n=s(k-1)+g+h$. This condition is equivalent to
rectangular complementation inside $((wk)^n)$, and the ratio is a power of $t$.
For the conjugate vertical sequence, Theorem~\ref{thm:vertical-nonnegative} gives
coefficientwise nonnegativity exactly at the smallest admissible number $wk$ of
variables, except for the ratio from the empty partition to a column.

Rectangular complementation yields the reflection relation at $n=sk$ and the
boundary specialization. The recurrence in the number of variables follows from
telescoping the contributions of consecutive boxes of the skew diagram in each
column. At the endpoints, the principal specialization formula is organized by
the rectangular blocks indexed by an ordinary staircase. The centered endpoint
product satisfies exchange, reciprocity, and inversion identities. The reciprocity
and inversion identities are compatible with parameter inversion in the Macdonald
principal specialization.

In the Hall--Littlewood specialization, after removal of its monomial factor, the
ratio from the initial generalized staircase to an intermediate partition is a
Gaussian polynomial, the rank generating polynomial for partitions in a rectangle.
For the conjugate vertical sequence, the Gaussian polynomial appears in its initial
successive ratio and the later successive ratios are powers of $t$. In the Jack
limit, the ratios have finite product formulas and admit an additive centered
representation. At the Schur specialization, the formulas include evaluations at
$\mathbf 1_n$ and principal specializations at $(1,t,\ldots,t^{n-1})$. The
$180^\circ$ content symmetry of the endpoint skew diagram gives the variable shift
$n\mapsto n+k(w-s)$ under interchange of the two staircase step dimensions. A
shift between consecutive generalized staircase diagrams preserves hook lengths
away from the top $s$ rows and determines the ratio of hook products. For ordinary
staircases, the resulting formulas recover the hook-product and hook--content ratios
underlying the Jacobi polynomial evaluations cited in Section~7.

A remaining problem is to determine the coefficientwise nonnegative finite principal specialization
ratios for arbitrary nested pairs of partitions. Lemma~\ref{lem:general-nonnegativity}
shows that removing the monomial factor from such a ratio leaves a series independent of $q$.
Theorem~\ref{thm:nonnegative-complement} characterizes coefficientwise nonnegativity for
the horizontal ratios with $wk\ge2$. The corresponding result for the conjugate vertical
sequence is Theorem~\ref{thm:vertical-nonnegative}. Another problem is to
obtain a combinatorial interpretation of the ratios for independent parameters $q$
and $t$. An interpretation derived from the filling formula of Haglund, Haiman, and
Loehr~\cite[Proposition~8.1]{HHL} requires accounting for the normalization relating
the integral form $J_\lambda$ to the monic $P_\lambda$. Macdonald's Pieri
coefficients~\cite[Chapter~VI, Equation~(6.24), part~(i)]{Macdonald} depend only on
the two indexing partitions and on $q,t$. The principal specialization ratio also
depends on $n$.

\section*{Acknowledgments}
This work was supported by JSPS KAKENHI Grant Number JP26K24505.
The author acknowledges the use of ChatGPT to assist with mathematical arguments.

\end{document}